\title[Koornwinder moments and the two-species exclusion process]{Macdonald-Koornwinder moments and the two-species exclusion process}
\author{Sylvie Corteel and Lauren K. Williams}
\date{\today}
\thanks{SC was partially funded by the ``Combinatoire \`a Paris" projet 
Emergences 2013--2017 and by ``ALEA Sorbonne" projet IDEX USPC.
LW was partially supported by 
the Fondation Sciences Math\'ematiques de Paris, the Simons foundation,
a Rose-Hills Investigator award, and an NSF CAREER award.  
Both authors are grateful for the comments of the 
anonymous referees, and the support of the France-Berkeley fund,
and would like to acknowledge the hospitality of LIAFA, where part of this 
work was carried out.}
\address{Laboratoire d'Informatique Algorithmique: Fondements et Applications,
Centre National de la Recherche Scientifique et Universit\'e Paris Diderot,
Paris 7, Case 7014, 75205 Paris Cedex 13
France} 
\email{corteel@liafa.univ-paris-diderot.fr}
\address{Department of Mathematics, University of California, Berkeley,
Evans Hall Room 913, Berkeley, CA 94720}
\email{williams@math.berkeley.edu}
\subjclass[2000]{Primary 05E10; Secondary 82B23, 60C05}
\keywords{}
\def\dd{\mathsf{d}}
\def\ee{\mathsf{e}}
\def\vblack(#1, #2)#3{\cnode*[linecolor=black](#1, #2){3}{#3}}
\def\vwhite(#1,#2)#3{\cnode[linecolor=black,fillcolor=white,fillstyle=solid](#1,
#2){3}{#3}}
\def\tbox(#1,#2)#3{
\x=#1 \y=#2
\multiply\x by 12
\multiply\y by 12
\z=\x \t=\y
\advance\z by 12
\advance\t by 12
\psline(\x,\y)(\x,\t)(\z,\t)(\z,\y)(\x,\y)
\advance\x by 6
\advance\y by 6
\rput(\x,\y){{\bf #3}}}
\newtheorem{theorem}{Theorem}[section]
\newtheorem{proposition}[theorem]{Proposition}
\newtheorem{lemma}[theorem]{Lemma}
\newtheorem{corollary}[theorem]{Corollary}
\newtheorem{conjecture}[theorem]{Conjecture}
\newtheorem{remark}[theorem]{Remark}
\newtheorem{definition}[theorem]{Definition}
\newcommand{\ttt}{\tau}
\newcommand{\Z}{\mathcal{Z}}
\newcommand{\K}{\mathcal{K}}
\newcommand{\zz}{\mathbf z}
\newcommand{\C}{\mathcal C}
\newcommand{\D}{\mathcal{D}}
\newcommand{\DD}{{D}}
\newcommand{\EE}{{E}}
\newcommand{\E}{\mathcal{E}}
\newcommand{\indicator}{\mathbbm{1}}
\DeclareMathOperator{\const}{const}
\DeclareMathOperator{\Motz}{Motz}
\DeclareMathOperator{\denom}{den}
\DeclareMathOperator{\den}{den}
\DeclareMathOperator{\num}{num}
\DeclareMathOperator{\inv}{inv}
\def\llangle{\langle}
\def\rrangle{\rangle}
\newcommand{\thmrefer}[1]{\renewcommand\thetheorem
  {\protect\ref{#1}}\addtocounter{theorem}{-1}}
\begin{document}

\keywords{Koornwinder polynomials, asymmetric exclusion process, 
Matrix Ansatz, staircase tableaux, Askey-Wilson polynomial}


\begin{abstract}
Introduced in the late 1960's \cite{bio, Spitzer},
the asymmetric exclusion process (ASEP) is an important model
from statistical mechanics which describes a system of interacting
particles hopping left and right on a one-dimensional lattice 
with open boundaries.
It has been known for awhile
that there is a tight connection between the 
partition function of the ASEP and moments of 
Askey-Wilson polynomials \cite{USW, CW-Duke1, CSSW}, a family 
of orthogonal polynomials which are at the top of the hierarchy of 
classical orthogonal polynomials in one variable.    On 
the other hand, Askey-Wilson polynomials can be viewed as a specialization
of the multivariate \emph{Macdonald-Koornwinder polynomials}
 (also known as Koornwinder polynomials), which in turn give
rise to 
the Macdonald polynomials associated to any classical root system
via a limit or specialization
\cite{vanDiejen}. 
In light of the fact that Koornwinder polynomials 
generalize the Askey-Wilson polynomials,
it is natural to ask whether
one can find a particle model whose partition function is related to 
Koornwinder polynomials.  In this article we answer this question
affirmatively, by showing that Koornwinder moments at $q=t$
are closely connected to the partition function for the 
\emph{two-species  exclusion process}.

\end{abstract}

\maketitle

\setcounter{tocdepth}{1}
\tableofcontents

\section{Introduction}

Introduced in the late 1960's \cite{bio, Spitzer},
the asymmetric exclusion process (ASEP) is a model
of interacting
particles hopping left and right on a one-dimensional lattice of $N$
sites.  
In the most general form of the ASEP with open boundaries,
particles may enter and exit at the left with probabilities
$\alpha$ and $\gamma$, and they may exit and enter at the right
with probabilities $\beta$ and $\delta$.  In the bulk,
the probability of hopping left is $q$ times the probability of hopping
right.
The ASEP is important in statistical mechanics because
it is one of the simplest models which exhibits 
boundary-induced phase transitions.  Moreover, the ASEP 
has been cited as a model for traffic flow and
protein synthesis.  

It has been known since work of Uchiyama-Sasamoto-Wadati \cite{USW} that there
is a close connection between the partition function $Z_N$ of the ASEP, and Askey-Wilson polynomials,
a family 
of orthogonal polynomials which are at the top of the hierarchy of 
classical orthogonal polynomials in one variable.  Using 
their work, we showed in \cite{CSSW} that  
each Askey-Wilson moment equals a specialization of the \emph{fugacity partition function} $Z_N(\xi)$
of the ASEP, see Theorem \ref{thm:ZN-AW}.
In \cite{CW-Duke1, CW-Duke2}, we introduced some new combinatorial objects called
\emph{staircase tableaux}, and used them to completely describe the stationary distribution
of the ASEP; in particular, the partition function $Z_N(\xi)$ can be written as a sum over
staircase tableaux of size $N$.  It follows that (up to a scalar factor), $Z_N(\xi)$ is a 
polynomial with positive coefficients, and that 
Askey-Wilson moments can be expressed in terms of staircase tableaux.

Askey-Wilson polynomials can be viewed as a specialization of 
the multivariate \emph{Macdonald-Koornwinder polynomials}, which are 
also known as \emph{Koornwinder polynomials} \cite{Koornwinder},
or \emph{Macdonald polynomials for the type BC root system} \cite{Macdonald}.
(For brevity, we will henceforth call them Koornwinder polynomials.)
These polynomials
are particularly important because 
the Macdonald polynomials associated to any classical root system
can be expressed as limits or special cases of Koornwinder 
polynomials 
\cite{vanDiejen}.   Since Koornwinder polynomials generalize 
Askey-Wilson polynomials, and Askey-Wilson moments are closely connected to the ASEP,
it is natural to ask if there is some particle model generalizing the ASEP whose 
partition function is related to Koornwinder polynomials.  This question was posed to us 
by Mark Haiman in 2007 \cite{Haiman}.

When $q=t$, Koornwinder polynomials can be expressed in terms of Askey-Wilson polynomials
by means of a Schur-like determinantal formula.
This fact, together with the connection between Askey-Wilson moments and the partition 
function $Z_N(\xi)$ of the ASEP, led Eric Rains \cite{Rains} to suggest that we consider,
for any partition $\lambda = (\lambda_1,\lambda_2,\dots, \lambda_m)$, the quantity 
\begin{equation}\label{Kmoments}
K_{\lambda}(\xi) = \frac{\det(Z_{\lambda_i+m-i+m-j}(\xi))_{i,j=1}^m}{\det(Z_{2m-i-j}(\xi))_{i,j=1}^m}.
\end{equation}
The $K_{\lambda}(\xi)$'s can be considered to be Koornwinder moments,
see Sections \ref{sec:Koornwinder} and 
\ref{sec:moments2}.

In this paper we will answer Haiman's question affirmatively by 
demonstrating that there is a close connection between 
Koornwinder moments and the partition function of the \emph{two-species ASEP}.  
The two-species ASEP is a generalization of the ASEP which involves two different
kinds of particles, ``heavy" and ``light".  Both types of particles can hop left and right 
in the lattice (heavy and light particles interact exactly as do particles and holes
in the usual ASEP), but only the heavy particles can enter and exit the lattice
at the left and right boundary.  So in particular, the number of light particles
is conserved.  When there are no light particles, the two-species
ASEP reduces to the usual ASEP.
The  main result of this paper is 
an interpretation of the ``complete homogeneous"  Koornwinder moments 
$K_{(m,0,0,\dots,0)}(\xi)$ in terms of the partition function of the 
two-species ASEP.  That is, we prove the following.
\begin{theorem}\label{thm:main}
The Koornwinder moment $K_{(N-r,0,0,\dots,0)}(\xi)$ (where there are precisely $r$ $0$'s in the partition)
is proportional to the fugacity partition function $Z_{N,r}(\xi)$ for 
the two-species ASEP on a lattice of $N$ sites with $r$ ``light" particles.
More specifically, 
$$K_{(N-r,0,0,\dots,0)}(\xi) = 
	Z_{N,r}(\xi).$$ 
\end{theorem}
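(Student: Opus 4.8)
The plan is to pass to the matrix-ansatz description of both models and to reformulate the bialternant \eqref{Kmoments} in terms of the orthogonal polynomials attached to the Askey--Wilson moments $Z_n(\xi)$. First I would fix matrix-ansatz data $D,E,\langle W|,|V\rangle$ underlying Theorem \ref{thm:ZN-AW}, normalized so that $Z_0(\xi)=\langle W|V\rangle=1$, and let $M=M(\xi)$ be the associated transfer operator (so that $M=\xi D+E$ up to the placement of the fugacity), giving $Z_n(\xi)=\langle W|M^n|V\rangle$. This realizes $(Z_n(\xi))_{n\ge 0}$ as the moment sequence $\mathcal{L}[x^n]=Z_n(\xi)$ of a linear functional $\mathcal{L}$ which, after the standard change of variables, is the Askey--Wilson moment functional.

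The one routine step is the reduction of the left-hand side. With $m=r+1$ and $\lambda=(N-r,0,\dots,0)$, the numerator and denominator matrices of \eqref{Kmoments} agree in every row except the first, so a cofactor expansion along the first row combined with Cramer's rule collapses the ratio to
\[
K_{(N-r,0,\dots,0)}(\xi)=\sum_{k=0}^{r}(H^{-1})_{rk}\,Z_{N+k}(\xi),
\]
where $H=(Z_{i+j}(\xi))_{i,j=0}^{r}$ is the Hankel matrix of the first $2r+1$ moments. The last row of $H^{-1}$ is exactly the coefficient vector of $P_r/\mathcal{L}[P_r^2]$, where $P_r$ is the monic orthogonal polynomial of degree $r$ for $\mathcal{L}$; hence
\[
K_{(N-r,0,\dots,0)}(\xi)=\frac{\mathcal{L}\big[x^{N}P_r(x)\big]}{\mathcal{L}\big[P_r(x)^2\big]},
\]
so the left-hand side is an explicit Askey--Wilson overlap, and this rewriting is manifestly independent of the overall scale of the $Z_n$.

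The substance of the proof is to produce this same expression from the two-species model. Here I would invoke a matrix ansatz for the two-species ASEP in which a third operator $A$ encodes a light (second-class) particle, so that
\[
Z_{N,r}(\xi)=\sum_{w}\langle W|\,w\,|V\rangle,
\]
the sum ranging over all length-$N$ words $w$ in the symbols $\xi D$, $E$, $A$ that contain exactly $r$ copies of $A$. The goal is then to show that this equals $(1-q)^{r}\,\mathcal{L}[x^{N}P_r]/\mathcal{L}[P_r^2]$. The mechanism I expect is that in the tridiagonal (Jacobi) representation realizing the Askey--Wilson structure, each insertion of $A$ implements one step up the orthogonal-polynomial ladder, so that the $r$ conserved light particles project the walk generated by $M$ onto its $r$-th level, producing $\mathcal{L}[x^{N}P_r]$; the subdiagonal Jacobi entries then account for the prefactor $(1-q)^{r}$. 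Equivalently, one can argue by induction on $r$, peeling off one light particle at a time via the three-term recurrence for the $Z_n(\xi)$, or organize the computation as a Lindstr\"om--Gessel--Viennot sum over $r+1$ non-intersecting paths generated by $M$.

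I expect the main obstacle to be exactly this identification: verifying that the two-species matrix ansatz is governed by an operator $A$ whose action is the ladder step above, and that the resulting constant is precisely $(1-q)^{r}$ with all boundary contributions --- the dependence on $\alpha,\beta,\gamma,\delta$ entering through $\langle W|$ and $|V\rangle$ --- matching on the two sides. The Cramer reduction and the orthogonal-polynomial rewriting of the left-hand side are formal; by contrast, pinning down $A$, controlling the boundary terms, and separating off the $q=1$ degeneration treated in Lemma \ref{lem:q=1} is where the genuine work lies.
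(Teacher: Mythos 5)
Your first step is correct, and it is essentially the paper's Section \ref{sec:Motzkin} in a different guise. The cofactor/Cramer collapse of the bialternant to $\sum_{k=0}^r (H^{-1})_{rk}\, Z_{N+k}(\xi)$, together with the identification of the last row of $H^{-1}$ with the coefficient vector of $P_r/\mathcal{L}[P_r^2]$, yields $K_{(N-r,0,\dots,0)}(\xi)=\mathcal{L}[x^N P_r]/\mathcal{L}[P_r^2]$; this is the same statement as Theorem \ref{thm:det-Motzkin} and Corollary \ref{thm:moment-Motzkin}, where it is phrased via the KMLGV lemma as the partial-Motzkin-path quantity $\frac{1}{k_r}\langle W|(\xi D+E)^N|V^r\rangle$ (a walk of the Jacobi matrix from level $0$ to level $r$). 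Up to here your argument is fine, and your scale-independence remark is also correct.

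Everything after that is a genuine gap, and it is exactly the gap that constitutes the theorem. The identity $Z_{N,r}(\xi)=(1-q)^r\,\mathcal{L}[x^NP_r]/\mathcal{L}[P_r^2]$ --- equivalently the paper's Theorem \ref{thm:main2}, $\langle W|(\xi D+E)^N|V^r\rangle\cdot\rho_r=[y^r]\,\langle W|(\xi D+E+yA)^N|V\rangle/\langle W|A^r|V\rangle$ --- is left by you as a ``mechanism I expect,'' and the mechanism you propose is in fact false. In the Uchiyama--Sasamoto--Wadati representation, $A=DE-ED$ is \emph{not} a one-step raising operator: from $\dd\ee=q\ee\dd+(1-q)\indicator$ one gets $A=\frac{1}{1-q}(\indicator-\ee\dd)$, and $\ee\dd$ is pentadiagonal, e.g.
\begin{equation*}
(\ee\dd)_{i,i+2}=e_i^{\sharp}\,d_{i+1}^{\sharp}=-q^iac\neq 0,
\end{equation*}
so inserting $A$ does not move the walk one level up, and the $r$ conserved light particles do not simply project the walk onto its $r$-th level. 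This is precisely why the paper cannot argue this way: its proof of Theorem \ref{thm:main2} occupies Sections \ref{sec:M-2} and \ref{sec:thm}, via (i) the word-by-word refinement Theorem \ref{thm:refinement}, which requires the inversion statistic $\inv_E$ and produces the factors $(\alpha\xi+q^i\gamma)$; (ii) a reduction to the single word $D^N$ by induction on the number of applications of $DE=qED+D+E$ needed to reach normal form (Lemmas \ref{lem:1} and \ref{lem:2}, Theorem \ref{prop:suffices}); and (iii) the explicit evaluation
\begin{equation*}
\langle W|\dd^N|V^r\rangle
=\left[\begin{array}{c} N\\ r\end{array}\right]_q\frac{\langle W|A^r\dd^{N-r}|V\rangle}{\langle W|A^r|V\rangle}
=\left[\begin{array}{c} N\\ r\end{array}\right]_q\frac{F_{N-r}(bdq^r)}{\prod_{i=0}^{N-r-1}(1-abcdq^{2r+i})},
\end{equation*}
which rests on the recurrences of Section \ref{sec:thm} and nontrivial $q$-binomial identities. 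Your fallback suggestions (induction on $r$ via the three-term recurrence, or an LGV organization of the right-hand side) are only named, not executed, and your accounting of the constant is also off: the superdiagonal entries of $\xi D+E$ are $(\xi-q^iac)/(1-q)$, so the constants that actually appear are $k_r=\prod_{i=0}^{r-1}(\xi-q^iac)$, and the clean prefactor $1/(1-q)^r$ emerges only after these cancel between the two halves of the argument. In short, the half you completed is the routine half; the half you deferred is the theorem.
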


As an intermediate step towards proving Theorem \ref{thm:main},
we give a combinatorial interpretation of $Z_{N,r}(\xi)$ in terms
of partial Motzkin paths, see Theorem 
\ref{thm:main2}.  This leads to an integral representation 
for $Z_{N,r}(\xi)$, see Corollary \ref{cor:integral}, which in turn
can be used to compute asymptotics.

We also prove a Jacobi-Trudi-type formula which expresses any arbitrary Koornwinder moment
$K_{\lambda}$ as a determinant in the complete homogeneous Koornwinder moments, see Corollary \ref{thm:JT}.

Since the partition function $Z_N$ is a polynomial with positive coefficients
\cite{CW-Duke1, CW-Duke2}, Rains conjectured that the Koornwinder moments
$K_{\lambda}$ are also
polynomials with positive coefficients (up to a simple scalar factor) \cite{Rains}.
Note that by the probabilistic interpretation of Theorem 
\ref{thm:main}, it follows that 
$K_{(N-r,0,0,\dots,0)}$ is positive whenever we specialize the parameters
$\alpha, \beta, \gamma, \delta$, and $q$ to be positive numbers between
$0$ and $1$.
Moreover, in a sequel to this paper 
\cite{CMW}, 
we will 
give a tableaux formula for $Z_{N,r}$, which implies that both
$Z_{N,r}$ and $K_{(N-r,0,0,\dots,0)}$ are  (up to a scalar factor) polynomials
in the parameters with positive coefficients.

It is worth noting that there has been some recent work which may be related to ours.
In \cite{BorodinCorwin}, Borodin and Corwin introduced what they call
\emph{Macdonald processes}, which are 
probability measures on sequences of partitions defined in terms of nonnegative specializations 
of the (type A) Macdonald symmetric functions, and showed that they are related to various interacting particle systems.
None of the particle systems they discuss is the ASEP (or two-species ASEP) with open boundaries, but perhaps there exists
some more general \emph{Koornwinder processes} which would be connected to the ASEP.

Additionally, while we were writing up this paper, we learned about some work of Cantini
\cite{Cantini}, which contains a result similar to our 
Theorem  \ref{thm:main}.  However, Cantini uses the partition
$(1^{N-r},0^r)$, and his techniques are completely different from ours; 
e.g. he uses the affine Hecke algebra of type $\hat C_N$ as opposed
to the Matrix Ansatz and the combinatorics of Motzkin paths.

The structure of this paper is as follows.
In Section \ref{sec:Koornwinder}, we give an introduction
to Askey-Wilson polynomials, and Koornwinder polynomials,
and Koornwinder moments at $q=t$.
In Section \ref{sec:ASEP} we define the asymmetric exclusion 
process and the two-species exclusion process.  We also explain the 
\emph{Matrix Ansatz}, which is a powerful tool for analyzing the 
stationary distribution of these models.  In Section \ref{sec:Motzkin},
we give an interpretation of Koornwinder moments in terms of 
weighted Motzkin paths.
In Section \ref{sec:JT} we state and prove a Jacobi-Trudi type 
formula for Koornwinder moments.  In Section \ref{sec:M-2}
we prove a connection between partial Motzkin paths and the 
two-species exclusion process.  In Section \ref{sec:thm}, 
we complete the proof of our main result.  Finally in
Section \ref{sec:specialize} we show that when we specialize
$\xi=q=1$, the Koornwinder moments $K_{\lambda}$ have a beautiful multiplicative
formula in terms of the hook lengths of the corresponding partition.  
This formula provides evidence for the 
positivity conjecture regarding Koornwinder moments.

\textsc{Acknowledgments:} 
We would like to thank Mark Haiman, who pointed out to us in 2007 that 
Koornwinder polynomials generalize the Askey-Wilson polynomials, and asked us 
if we could make a connection between Koornwinder polynomials and some 
generalization of the ASEP.  We would also like to thank Eric Rains, 
who suggested that we look at Koornwinder polynomials for $q=t$, and 
in particular at the quantities defined by \eqref{Kmoments}.
Finally we would like to thank Jennifer Morse and Dennis Stanton for useful comments, and 
Donghyun Kim for noticing a typo in Theorem \ref{thm:main} in the published version of the paper (which is 
corrected in this version).

\section{Askey-Wilson polynomials and Koornwinder polynomials 
at $q=t$}\label{sec:Koornwinder}

The Askey-Wilson polynomials are  orthogonal polynomials
with five free parameters ($a, b, c, d, q$).
They reside at the top of the hierarchy of the one-variable
orthogonal polynomial family in the Askey scheme \cite{AW,GR,Koekoek}.
In this section we define  Askey-Wilson polynomials, following
the exposition of \cite{AW} and \cite{USW}, as well as Askey-Wilson moments.
We will then define Koornwinder polynomials and their moments for $q=t$.

The $q$-shifted factorial is defined by
\begin{eqnarray*}
(a_1,a_2,\cdots,a_s;q)_n=\prod_{r=1}^s \prod_{k=0}^{n-1} (1-a_rq^k),
\end{eqnarray*}
and the basic hypergeometric function is given by
\begin{eqnarray*}
{}_r\phi_s\left[ {{a_1,\cdots ,a_r}\atop{b_1,\cdots ,b_s}};q,z\right]
=\sum_{k=0}^\infty \frac{(a_1,\cdots,a_r;q)_k}{(b_1,\cdots,b_s,q;q)_k}
((-1)^k q^{k(k-1)/2})^{1+s-r} z^k.
\end{eqnarray*}

The Askey-Wilson polynomial $P_n(x)=P_n(x;a,b,c,d\vert q)$
is explicitly defined by
\begin{eqnarray*}
P_n(x)=
a^{-n}(ab,ac,ad;q)_n\
{}_4\phi_3\left[ {{q^{-n},q^{n-1}abcd,ae^{i\theta},ae^{-i\theta}}
        \atop{ab,ac,ad}};q,q \right] ,
\label{eqn:defAW}
\end{eqnarray*}
with $x=\cos\theta$ for $n\in \Z_+:=\{0,1,2,\cdots\}$.
It satisfies the three-term recurrence
\begin{eqnarray*}
A_nP_{n+1}(x)+B_nP_n(x)+C_nP_{n-1}(x)=2xP_n(x),
\label{eqn:recAW}
\end{eqnarray*}
with $P_0(x)=1$ and $P_{-1}(x)=0$,
where
\begin{align*}
A_n&=
\frac{1-q^{n-1}abcd}{(1-q^{2n-1}abcd)(1-q^{2n}abcd)},
\\
B_n&=
\frac{q^{n-1}}{(1-q^{2n-2}abcd)(1-q^{2n}abcd)}
[(1+q^{2n-1}abcd)(qs+abcds')-q^{n-1}(1+q)abcd(s+qs')],
\\
C_n&=
\frac{(1-q^n)(1-q^{n-1}ab)(1-q^{n-1}ac)(1-q^{n-1}ad)(1-q^{n-1}bc)
(1-q^{n-1}bd)(1-q^{n-1}cd)}
{(1-q^{2n-2}abcd)(1-q^{2n-1}abcd)},
\end{align*}
\begin{eqnarray*}
\text{ and }~~s=a+b+c+d, \qquad s'=a^{-1}+b^{-1}+c^{-1}+d^{-1}.
\end{eqnarray*}

\begin{remark}\label{AWSymmetry}
It is obvious from the three-term recurrence that the polynomials
$P_n(x)$ are symmetric in $a, b, c$ and $d$.
\end{remark}

For $|a|, |b|, |c|, |d| < 1$, using
$z=e^{i\theta}$,
the orthogonality is expressed by
\begin{eqnarray*}
\oint_C \frac{dz}{4\pi iz} w\left(\frac{z+z^{-1}}{2}\right)
P_m\left(\frac{z+z^{-1}}{2}\right)P_n\left(\frac{z+z^{-1}}{2}\right)
=\frac{h_n}{h_0}\delta_{mn},
\label{eqn:orthoointAW}
\end{eqnarray*}
where the integral contour $C$ is a closed path which
encloses the poles at $z=aq^k$, $bq^k$, $cq^k$, $dq^k$ $(k\in \Z_+)$
and excludes the poles at $z=(aq^k)^{-1}$, $(bq^k)^{-1}$, $(cq^k)^{-1}$,
$(dq^k)^{-1}$ $(k\in \Z_+)$, and where
\begin{eqnarray*}
&&w(\cos\theta)=
\frac{(e^{2i\theta},e^{-2i\theta};q)_\infty}
{(ae^{i\theta},ae^{-i\theta},be^{i\theta},be^{-i\theta},
ce^{i\theta},ce^{-i\theta},de^{i\theta},de^{-i\theta};q)_\infty}, \\
&&\frac{h_n}{h_0}=
\frac{(1-q^{n-1}abcd)(q,ab,ac,ad,bc,bd,cd;q)_n}
{(1-q^{2n-1}abcd)(abcd;q)_n} ,\\
&&h_0=
\frac{(abcd;q)_\infty}{(q,ab,ac,ad,bc,bd,cd;q)_\infty} .
\end{eqnarray*}
(In the other parameter region, the orthogonality is continued analytically.)

\begin{remark}
Note that our definition of the weight function above differs slightly from 
the definition given in \cite{AW}; the weight function in \cite{AW} 
did not have the $h_0$ in the denominator.  Our convention simplifies some of 
the formulas to come.
\end{remark}

\begin{definition}\label{def:AW-moments}
The moments of the (weight function of the) Askey-Wilson polynomials
-- which we sometimes refer to as simply the \emph{Askey-Wilson moments} --
are defined by
\begin{eqnarray*}
\mu_k = \mu_k(a,b,c,d|q) = \oint_C \frac{dz}{4\pi iz} w\left(\frac{z+z^{-1}}{2}\right)
\left(\frac{z+z^{-1}}{2}\right)^k.
\label{eqn:moment}
\end{eqnarray*}
\end{definition}



\begin{definition}\label{KoornwinderPolynomials}
Let $\zz=(z_1,\dots, z_m)$, $\lambda = (\lambda_1,\dots, \lambda_m)$ 
be a partition,
and $a, b, c, d, q, t$ be generic complex parameters.
The \emph{Koornwinder polynomials} $P_{\lambda}(\zz; a, b, c, d|q,t)$ are
multivariate orthogonal polynomials which are the type BC-case of Macdonald polynomials.
More specifically, 
$P_{\lambda}(\zz; a, b, c, d|q,t)$ is the unique Laurent polynomial which is 
invariant under permutation and inversion of variables, with leading
monomial $\zz^{\lambda}$, and orthogonal with respect to the 
\emph{Koornwinder density}
$$ \prod_{1 \leq i < j \leq m}
\frac{(z_i z_j, z_i/z_j, z_j/z_i, 1/{z_i z_j}; q)_{\infty}}
{(t z_i z_j, tz_i/z_j, tz_j/z_i, t/{z_i z_j};q)_{\infty}}
\prod_{1 \leq i \leq m} 
\frac{(z_i^2, 1/{z_i^2}; q)_{\infty}}
{(az_i, a/z_i, bz_i, b/z_i, cz_i, c/z_i, dz_i, d/z_i;q)_{\infty}}$$
on the unit torus 
$|z_1|=|z_2| = \dots = |z_m| = 1$, where the parameters satisfy
$|a|, |b|, |c|, |d|, |q|,|t|<1$.

At $q=t$, we have
$$P_{\lambda}(\zz; a, b, c, d| q, q) = \const \cdot
 \frac{\det(p_{m-j+\lambda_j}(z_i; a, b, c, d| q))_{i,j=1}^m}
{\det(p_{m-j}(z_i; a, b, c, d|q))_{i,j=1}^m},$$
where the $p_i$'s are the Askey-Wilson polynomials.

Note that when $q=t$, the Koornwinder density becomes 
\begin{equation}\label{eq:K}
\prod_{1\le i<j\le m} (1-z_iz_j)(1-z_i/z_j)(1-z_j/z_i)(1-1/z_iz_j)
\prod_{1\le i\le m} w\left(\frac{z_i+z_i^{-1}}{2}\right),
\end{equation}
where $w$ denotes the Askey-Wilson density.
\end{definition}

\begin{remark}
When $q=t$, Koornwinder polynomials are sometimes called 
\emph{Macdonald's 9th variation of Schur functions associated with Askey-Wilson polynomials} \cite{Noumi}.
\end{remark}

For Askey-Wilson polynomials, the $k$th moment $\mu_k$ is defined
to be the integral of $x^k$ (here $x = \frac{z+z^{-1}}{2}$) with respect 
to the Askey-Wilson density.  For the multivariate Koornwinder polynomials, 
there are several ways that we could define moments.  One way would be to 
integrate a monomial in $x_1,\dots, x_m$ (here we set 
$x_i = \frac{z_i+z_i^{-1}}{2}$) with respect to the Koornwinder density
\eqref{eq:K}.
Following a suggestion of Eric Rains \cite{Rains}, we will instead 
define our Koornwinder moments by integrating 
Schur polynomials $s_{\lambda}(x_1,\dots,x_m)$ with respect to \eqref{eq:K}.

\begin{definition}
Let $I_k(f(x_1,\dots,x_m);a,b,c,d;q,q)$ denote the result of integrating the 
function $f(x_1,\dots,x_m)$ with respect to the Koornwinder density 
\eqref{eq:K}.
We define the \emph{Koornwinder moment}
$$M_{\lambda}=M_{\lambda}(a,b,c,d | q) 
= I_k(s_{\lambda}(x_1,\dots,x_m); a, b, c, d; q,q).$$
\end{definition}

As Rains pointed out to us, these Koornwinder moments have a 
determinantal formula \cite{Rains}.

\begin{lemma}\label{lem:Koornwindermoment}
We have that 
$$M_{\lambda}
= \frac{\det(\mu_{\lambda_i+m-i+m-j})_{i,j=1}^m}
{\det(\mu_{2m-i-j})_{i,j=1}^m},$$ where $\mu_k$ is an Askey-Wilson moment.
\end{lemma}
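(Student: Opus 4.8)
The plan is to combine the determinantal $q=t$ formula for Koornwinder polynomials with the orthogonality that defines them, and to reduce the multivariate integral $I_k(s_\lambda)$ to a determinant of one-variable Askey-Wilson moments by exploiting the antisymmetrization already built into both the density \eqref{eq:K} and the Schur function.

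First I would write the Schur function as a ratio of alternants, $s_\lambda(x_1,\dots,x_m) = \frac{\det(x_i^{\lambda_j+m-j})_{i,j=1}^m}{\det(x_i^{m-j})_{i,j=1}^m}$, where $x_i = \frac{z_i+z_i^{-1}}{2}$. The key structural observation is that the prefactor $\prod_{1\le i<j\le m}(1-z_iz_j)(1-z_i/z_j)(1-z_j/z_i)(1-1/z_iz_j)$ in \eqref{eq:K} is, up to normalization, exactly the square of a Vandermonde-type alternant in the variables $x_i$; equivalently it equals $\prod_{i<j}(x_i-x_j)$ times its natural partner. Thus the Koornwinder density factors as a product of single-variable Askey-Wilson weights $w(x_i)$ times a discriminant-type factor. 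This is precisely the situation in which the \emph{Andr\'eief (Heine) identity} converts an $m$-fold integral of a product of two determinants against a product measure into a single $m\times m$ determinant of one-variable integrals.

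Concretely, I would multiply numerator and denominator of $s_\lambda$ through, absorb one alternant into the discriminant factor of the density, and apply Andr\'eief to the resulting integral $\oint\cdots\oint \det(x_i^{\lambda_j+m-j}) \det(x_i^{m-j}) \prod_i w(x_i)\,\frac{dz_i}{4\pi i z_i}$. Each entry of the resulting determinant is an integral of the form $\oint \frac{dz}{4\pi i z}\, w\!\left(\frac{z+z^{-1}}{2}\right) x^{a+b}$, which is by Definition \ref{def:AW-moments} an Askey-Wilson moment $\mu_{a+b}$. This yields $\det(\mu_{\lambda_i+m-i+m-j})$ in the numerator; the denominator $\det(\mu_{2m-i-j})$ arises identically by taking $\lambda = \varnothing$ (the normalization $I_k(1)$, or equivalently the overall constant from clearing the Vandermonde in the denominator of $s_\lambda$). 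Matching indices and accounting for the transposition between the $(i,j)$ conventions in the two alternants gives the claimed formula, with the $\const$ factors cancelling in the ratio.

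The main obstacle I expect is bookkeeping rather than conceptual: one must verify that the discriminant prefactor in \eqref{eq:K} is genuinely the \emph{single} alternant $\prod_{i<j}(x_i-x_j)$ (and not its square or some $q$-deformation), so that exactly one copy pairs with the Schur numerator and the density is left symmetric with weight $\prod_i w(x_i)$ — this is where the specialization $q=t$ is essential, since for general $t$ the prefactor does not collapse to a plain Vandermonde and Andr\'eief does not apply. I would confirm this factorization by a direct computation expanding $(1-z_iz_j)(1-z_i/z_j)(1-z_j/z_i)(1-1/z_iz_j)$ and rewriting it in terms of $x_i - x_j = \frac{1}{2}(z_i+z_i^{-1}-z_j-z_j^{-1})$. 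Once that identity is pinned down, the remaining steps are the standard Andr\'eief manipulation and index matching.
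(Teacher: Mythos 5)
Your proposal follows exactly the paper's route: write $s_\lambda$ as a ratio of alternants in the $x_i$, use that at $q=t$ the Koornwinder density collapses (up to the constant $2^{m(m-1)}$) to $\prod_{i<j}(x_i-x_j)^2\prod_i w(x_i)$, cancel one Vandermonde against the denominator of $s_\lambda$, apply Andr\'eief/Cauchy--Binet to the product of the two remaining determinants, and fix the constant of proportionality by setting $\lambda=0$ and using $I_k(1)=1$. The computation in your second and third paragraphs is precisely the paper's proof.

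However, your final paragraph contradicts the rest of your argument, and the ``verification'' you propose there would come out the wrong way. You assert that one must check that the prefactor is ``genuinely the \emph{single} alternant $\prod_{i<j}(x_i-x_j)$ (and not its square).'' It \emph{is} the square: the direct expansion you suggest gives
\begin{equation*}
(1-z_iz_j)(1-z_i/z_j)(1-z_j/z_i)(1-1/z_iz_j)
\;=\;
4\left(\frac{z_i+z_i^{-1}}{2}-\frac{z_j+z_j^{-1}}{2}\right)^2
\;=\;
4\,(x_i-x_j)^2,
\end{equation*}
which is where the overall factor $2^{m(m-1)}$ comes from. Moreover the square is exactly what your own argument (and the paper's) requires: one copy of the Vandermonde cancels the denominator of $s_\lambda$, and the \emph{other} copy supplies the second determinant that Andr\'eief needs. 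If the prefactor were a single Vandermonde, then $s_\lambda$ times the density would reduce to the single alternant $\det(x_i^{\lambda_j+m-j})\prod_i w(x_i)$, which is antisymmetric against a symmetric measure --- so every moment would vanish identically, and there would be no pair of determinants to which Andr\'eief applies. In short, the sanity check in your last paragraph has the roles of ``Vandermonde'' and ``Vandermonde squared'' reversed; once that is corrected, your argument is complete and coincides with the paper's.
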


\begin{proof}
Note that when $q=t$, the Koornwinder density can be written (up to 
a scalar factor of $2^{m(m-1)}$) as 
\begin{equation}
\prod_{1\le i<j\le m} (x_i-x_j)^2
\prod_{1 \leq i \leq m} w(x_i),
\end{equation}
where $x_i = \frac{z_i+z_i^{-1}}{2}$.

Recall that the classical definition of the Schur polynomials 
says that 
$$s_{\lambda}(x_1,\dots, x_m) = 
\frac{\det(x_j^{\lambda_i+m-i})_{i,j = 1}^m}
{\det(x_j^{m-i})_{i,j=1}^m},$$ so that 
$$s_\lambda(x_1,\dots,x_m)\prod_{1\le i<j\le m} (x_i-x_j)^2
=
\det_{1\le i,j\le m}(x_j^{\lambda_i+m-i})
\det_{1\le i,j\le m}(x_j^{m-i}).$$

Therefore we have that 
\begin{align*}
M_{\lambda}
& \propto
\int
\det_{1\le i,j\le m}(x_j^{\lambda_i+m-i})
\det_{1\le i,j\le m}(x_j^{m-i})
\prod_{1\le i\le m} w(x_i)\\
& \propto 
\det_{1\le i,j\le m}
  \int x^{\lambda_i+m-i} x^{m-j} w(x)\\
&=
\det_{1\le i,j\le m} \mu_{\lambda_i+m-i+m-j}.
\end{align*}
Here we obtained the second line by applying the 
integral version of the Cauchy-Binet formula 
\cite{deBruijn, Andreief},
and we obtained the third line by using the definition of 
Askey-Wilson moments.

Since the constant of proportionality is independent of $\lambda,$ 
we can recover it by setting $\lambda=0,$ and using
$I_k(1) = 1$.  This gives us 
$$I_K(s_\lambda(x_1,\dots, x_m);a,b,c,d;q,q)
=
\frac{\det(\mu_{\lambda_i+m-i+m-j})_{i,j = 1}^m}
{\det (\mu_{2m-i-j})_{i,j=1}^m}.$$
\end{proof}

\section{The asymmetric exclusion process and the two-species exclusion process}\label{sec:ASEP}

We start by defining the asymmetric exclusion process (ASEP)
with open boundaries.  We will then define the two-species exclusion process,
which generalizes the usual ASEP.  Finally we will explain the Matrix Ansatz, which 
has been an important tool for analyzing the stationary distribution of the ASEP.

\subsection{The asymmetric exclusion process (ASEP)}

\begin{definition}
Let $\alpha$, $\beta$, $\gamma$, $\delta$,  $q$, and $u$ be constants such that 
$0 \leq \alpha \leq 1$, $0 \leq \beta \leq 1$, 
$0 \leq \gamma \leq 1$, $0 \leq \delta \leq 1$, 
$0 \leq q \leq 1$,
and $0 \leq u \leq 1$.
Let $B_N$ be the set of all $2^N$ words in the
language $\{\circ, \bullet\}^*$.
The \emph{ASEP} is the Markov chain on $B_N$ with
transition probabilities:
\begin{itemize}
\item  If $X = A\bullet \circ B$ and
$Y = A \circ \bullet B$ then
$P_{X,Y} = \frac{u}{N+1}$ (particle hops right) and
$P_{Y,X} = \frac{q}{N+1}$ (particle hops left).
\item  If $X = \circ B$ and $Y = \bullet B$
then $P_{X,Y} = \frac{\alpha}{N+1}$ (particle enters from the left).
\item  If $X = B \bullet$ and $Y = B \circ$
then $P_{X,Y} = \frac{\beta}{N+1}$ (particle exits to the right).
\item  If $X = \bullet B$ and $Y = \circ B$
then $P_{X,Y} = \frac{\gamma}{N+1}$ (particle exits to the left).
\item  If $X = B \circ$ and $Y = B \bullet$
then $P_{X,Y} = \frac{\delta}{N+1}$ (particle enters from  the right).
\item  Otherwise $P_{X,Y} = 0$ for $Y \neq X$
and $P_{X,X} = 1 - \sum_{X \neq Y} P_{X,Y}$.
\end{itemize}
\end{definition}

See Figure \ref{states} for an
illustration of the four states, with transition probabilities,
for the case $N=2$.  The probabilities on the loops
are determined by the fact that the sum of the probabilities
on all outgoing arrows from a given state must be $1$.  

\begin{figure}[h]
\centering
\includegraphics[height=1.5in]{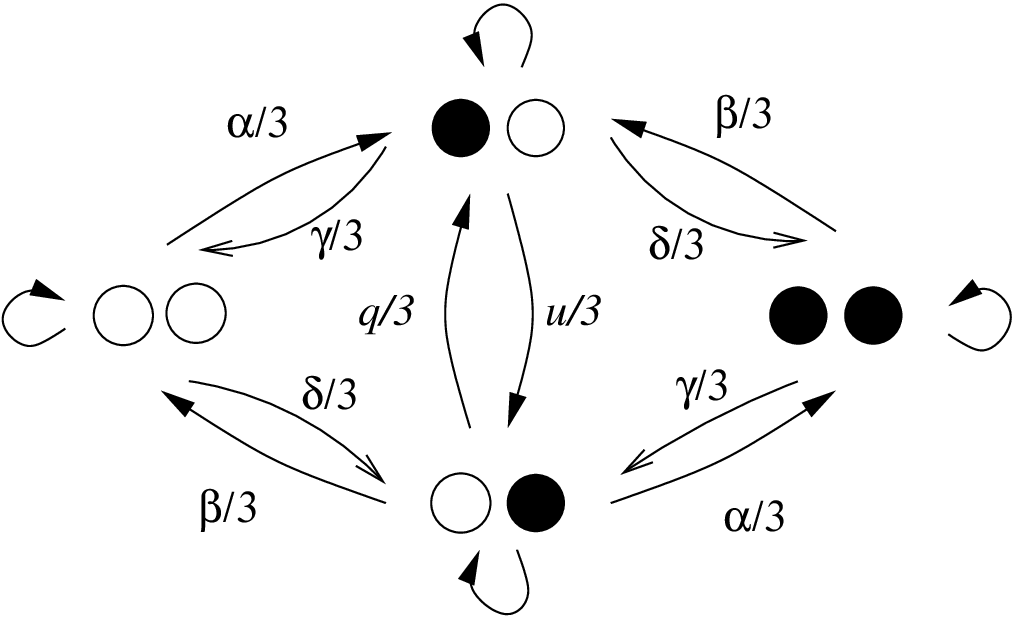}
\caption{The state diagram of the ASEP for $N=2$}
\label{states}
\end{figure}

In the long time limit, the system reaches a steady state where all 
the probabilities $P_n(\ttt_1, \ttt_2, \dots , \ttt_N)$ of finding
the system in configurations $(\ttt_1, \ttt_2, \dots , \ttt_N)$ are
stationary.  More specifically, the {\it stationary distribution}
is the unique (up to scaling) eigenvector of the transition
matrix of the Markov chain with eigenvalue $1$.

\subsection{The two-species exclusion process}

The two-species ASEP is a generalization of the ASEP which involves two kinds of 
particles, \emph{heavy} and \emph{light}.  We will denote a heavy particle by a $2$
and a light particle by a $1$.  We will also denote a hole (or the absence of a particle)
by a $0$.  In the two-species ASEP, heavy particles behave exactly as do 
particles in the usual ASEP: they can hop in and out of the boundary, and they can 
hop left and right in the lattice (swapping places with a hole or with a light particle).
Light particles cannot hop in and out of the boundary, but they may hop
left and right in the lattice (swapping places with a hole or with a heavy particle).
Therefore the number $r$ of light particles is conserved.

\begin{definition}
Let $\alpha$, $\beta$, $\gamma$, $\delta$,  $q$, and $u$ be constants such that 
$0 \leq \alpha \leq 1$, $0 \leq \beta \leq 1$, 
$0 \leq \gamma \leq 1$, $0 \leq \delta \leq 1$, 
$0 \leq q \leq 1$,
and $0 \leq u \leq 1$.
Let $B_{N,r}$ be the set of all words in 
$\{0,1,2\}^N$ which contain precisely $r$ $1$'s;
note that $|B_{N,r}| = {N \choose r} 2^{N-r}$.
The \emph{two-species ASEP} is the Markov chain on $B_{N,r}$ with
transition probabilities:
\begin{itemize}
\item  If $X = A 2 1 B$ and 
$Y = A 1 2 B$, or  
if $X = A 2 0 B$ and $Y = A 0 2 B$, or 
if $X = A 1 0 B$ and $Y = A 0 1 B$, 
then
$P_{X,Y} = \frac{u}{N+1}$  and
$P_{Y,X} = \frac{q}{N+1}$.
\item  If $X = 0 B$ and $Y = 2 B$
then $P_{X,Y} = \frac{\alpha}{N+1}$.
\item  If $X = B 2$ and $Y = B 0$
then $P_{X,Y} = \frac{\beta}{N+1}$.
\item  If $X = 2 B$ and $Y = 0 B$
then $P_{X,Y} = \frac{\gamma}{N+1}$.
\item  If $X = B 0$ and $Y = B 2$
then $P_{X,Y} = \frac{\delta}{N+1}$.
\item  Otherwise $P_{X,Y} = 0$ for $Y \neq X$
and $P_{X,X} = 1 - \sum_{X \neq Y} P_{X,Y}$.
\end{itemize}
\end{definition}

Note that if $r=0$, i.e. there are no light particles,
then the two-species ASEP is simply the usual ASEP.


\subsection{The Matrix Ansatz}

We now explain the \emph{Matrix Ansatz},
a technique introduced in \cite{DEHP} for computing the stationary 
distribution of the ASEP.  We also present Uchiyama's generalization of the Matrix Ansatz
\cite{Uchiyama} to the two-species case.  
Finally we will recall the solution to the Matrix Ansatz
which was found in \cite{USW}.

For convenience, we now set $u=1$.  Derrida, Evans, Hakim, and Pasquier
\cite{DEHP} proved the following theorem.

\begin{theorem}\cite{DEHP} \label{ansatz}
Suppose that there are matrices $D$ and $E$,  
and vectors $\langle W|$ and $|V\rangle$ such that the following relations hold:
\begin{itemize}
\item $\langle W| (\alpha E-\gamma D) = \langle W|$
\item $(\beta D-\delta E)|V\rangle = |V\rangle$
\item  $DE-qED = D+E$
\end{itemize}
Let $Z_{N} = {\langle W| (D+E)^N |V \rangle}$.
Then in the ASEP on a lattice of $N$ sites, 
the steady state probability of state $(\tau_1,\dots, \tau_N)$ is equal to 
$$\frac{\langle W| \prod_{i=1}^N [\indicator(\tau_i=\bullet)D+\indicator(\tau_i=\circ)E]|V\rangle}{Z_{N}}.$$
\end{theorem}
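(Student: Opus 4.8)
The plan is to verify that the proposed unnormalized weights $w(\tau) = \langle W| \prod_{i=1}^N X_i |V\rangle$ (where $X_i = D$ if $\tau_i = \bullet$ and $X_i = E$ if $\tau_i = \circ$) define a stationary measure, and then to normalize. Since the ASEP is irreducible on $B_N$, its stationary distribution is unique up to scaling, so it suffices to check that $w$ satisfies the stationarity equation $\pi P = \pi$, equivalently $\sum_{Y \neq X} \big( P_{Y,X}\, w(Y) - P_{X,Y}\, w(X) \big) = 0$ for every state $X$; the normalization is then automatic, because $\sum_\tau \prod_i X_i = (D+E)^N$, so $\sum_\tau w(\tau) = \langle W| (D+E)^N |V\rangle = Z_N$. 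Following the convention fixed above I set $u=1$ and drop the common factor $\tfrac{1}{N+1}$, which does not affect stationarity. I would then decompose the generator into a sum of local pieces: one two-site ``bulk'' operator $h^{(j,j+1)}$ for each bond $1 \le j \le N-1$, a one-site left-boundary operator at site $1$, and a one-site right-boundary operator at site $N$.

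The heart of the argument is a telescoping identity for the bulk. I record each site by the vector of matrices $\binom{E}{D}$ (top entry for a hole, bottom for a particle). A short reading off of the hopping rates shows that $h^{(j,j+1)}$ sends the pair $\binom{E}{D} \otimes \binom{E}{D}$ to the vector whose $(\circ\bullet)$-component is $+(DE - qED)$, whose $(\bullet\circ)$-component is $-(DE - qED)$, and whose $(\circ\circ)$- and $(\bullet\bullet)$-components vanish. The bulk relation $DE - qED = D + E$ then invites me to look for auxiliary matrices $\hat E, \hat D$ realizing this action as a \emph{discrete divergence}, i.e. $h^{(j,j+1)}\big(\binom{E}{D} \otimes \binom{E}{D}\big) = \binom{\hat E}{\hat D} \otimes \binom{E}{D} - \binom{E}{D} \otimes \binom{\hat E}{\hat D}$. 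Matching the four components forces $[\hat E, E] = [\hat D, D] = 0$ together with $\hat E D - E \hat D = D + E = -(\hat D E - D \hat E)$, which is solved by $\hat E = I$ and $\hat D = -I$ (the identity and its negative). Summing over all bonds then telescopes: every interior ``hat'' cancels, leaving only $+$(hat on site $1$) and $-$(hat on site $N$).

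It remains to cancel these two residual boundary terms against the genuine boundary operators, which is exactly what the boundary relations are designed to do. Applying the left-boundary operator to the site-$1$ factor produces $\binom{-(\alpha E - \gamma D)}{\alpha E - \gamma D}$; contracting on the left with $\langle W|$ and invoking $\langle W|(\alpha E - \gamma D) = \langle W|$ turns this into $\binom{-\langle W|}{\langle W|}$, which is precisely the negative of the site-$1$ hat term $\langle W|\binom{I}{-I} = \binom{\langle W|}{-\langle W|}$, so the two cancel. Symmetrically, the right-boundary operator applied to the site-$N$ factor gives $\binom{\beta D - \delta E}{-(\beta D - \delta E)}$, and $(\beta D - \delta E)|V\rangle = |V\rangle$ makes it cancel the $-$(hat on site $N$) term. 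Hence $\sum_{Y \neq X}\big(P_{Y,X} w(Y) - P_{X,Y} w(X)\big) = 0$ for all $X$, so $w$ is stationary, and dividing by $Z_N$ yields the claimed probabilities. The main obstacle is conceptual rather than computational: discovering that the bulk generator should be rewritten as a discrete divergence with auxiliary matrices, and then arranging the sign conventions so that the leftover site-$1$ and site-$N$ terms match the prescribed boundary relations exactly; once the ansatz $\hat E = I$, $\hat D = -I$ is found, every remaining step is a one-line linear computation.
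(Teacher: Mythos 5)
Your proof is correct, and it is essentially the standard argument for this result: the paper itself gives no proof of Theorem \ref{ansatz} (it is quoted from \cite{DEHP}), and the telescoping ``hat matrix'' cancellation mechanism you use --- writing the bulk balance as a discrete divergence with $\hat E = I$, $\hat D = -I$, then cancelling the surviving site-$1$ and site-$N$ terms against the boundary relations --- is precisely the proof found in the literature on the Matrix Ansatz. All of your sign conventions check out, and your appeal to irreducibility to identify the stationary measure up to scaling, with $Z_N$ emerging as the normalization, completes the argument correctly.
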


For example, the steady state probability of state 
$(\bullet, \circ, \circ, \bullet, \bullet)$ is equal to 
$\frac{\langle W| DEEDD|V\rangle}{Z_{5}}.$

Uchiyama generalized the Matrix Ansatz to the setting of the two-species exclusion process.
\begin{theorem}\cite{Uchiyama}\label{ansatz2}
Suppose that there are matrices $D$, $E$, $A$, 
and vectors $\langle W|$ and $|V\rangle$ such that the following relations hold:
\begin{itemize}
\item $\langle W| (\alpha E-\gamma D) = \langle W|$
\item $(\beta D-\delta E)|V\rangle = |V\rangle$
\item  $DE-qED = D+E$
\item $DA = qAD+A$
\item $AE = qEA+A$.
\end{itemize}
Then in the two-species ASEP on a lattice of $N$ sites with precisely $r$ 
light particles, the steady state probability of state $(\tau_1,\dots, \tau_n)$ is equal to 
$$\frac{\langle W| \prod_{i=1}^N [\indicator(\tau_i=2)D+\indicator(\tau_i=1)A+\indicator(\tau_i=0)E]|V\rangle}{[y^r]
\langle W| (D+E+yA)^N | V \rangle}.$$
\end{theorem}

For example, the steady state probability of state 
$(2,0, 1,1, 2,2)$ is equal to 
$\frac{\langle W| DEAADD|V\rangle}{Z_{6,2}}.$

\subsection{A solution to the Matrix Ansatz}\label{sec:sol}

Now consider the following tridiagonal matrices,
which were introduced by Uchiyama, Sasamoto and Wadati in 
\cite{USW}.\footnote{Actually we have slightly modified 
the definitions of $d_i^{\sharp}$, $d_i^{\flat}$, $e_i^{\sharp}$, and 
$e_i^{\flat}$ which appeared in \cite{USW} but it is easy to check that 
both the matrices  we are using here and the original matrices of \cite{USW}
satisfy Lemma \ref{thm:USW-solution}.}

\begin{eqnarray*}
\dd=\left[
\begin{array}{cccc}
d_0^\natural 	& d_0^\sharp 	& 0	 	& \cdots\\
d_0^\flat 	& d_1^\natural 	& d_1^\sharp 	& {}\\
0 		& d_1^\flat 	& d_2^\natural 	& \ddots\\
\vdots 		& {} 		& \ddots	& \ddots
\end{array}
\right] ,
&&\qquad 
\ee=\left[
\begin{array}{cccc}
e_0^\natural 	& e_0^\sharp 	& 0 		& \cdots\\
e_0^\flat 	& e_1^\natural 	& e_1^\sharp 	& {}\\
0 		& e_1^\flat 	& e_2^\natural 	& \ddots\\
\vdots 		& {}		& \ddots	& \ddots
\end{array}
\right], \text{ where }
\label{eqn:repde2}
\end{eqnarray*}
\begin{eqnarray*}
d_n^\natural:= d_n^\natural(a,b,c,d) &=&
\frac{q^{n-1}}{(1-q^{2n-2}abcd)(1-q^{2n}abcd)}\nonumber\\
&&\times[
bd(a+c)+(b+d)q-abcd(b+d)q^{n-1}-\{ bd(a+c)+abcd(b+d)\} q^n\nonumber\\
&&-bd(a+c)q^{n+1}+ab^2 cd^2(a+c) q^{2n-1}+abcd(b+d)q^{2n} ] ,\\
e_n^\natural:=e_n^\natural(a,b,c,d) &=&
\frac{q^{n-1}}{(1-q^{2n-2}abcd)(1-q^{2n}abcd)}\nonumber\\
&&\times[
ac(b+d)+(a+c)q-abcd(a+c)q^{n-1}-\{ ac(b+d)+abcd(a+c)\} q^n\nonumber\\
&&-ac(b+d)q^{n+1}+a^2 bc^2 d(b+d) q^{2n-1}+abcd(a+c)q^{2n} ] ,
\label{eqn:elements}
\end{eqnarray*}
\begin{eqnarray*}
&&d_n^\sharp := d_n^\sharp(a,b,c,d) =
1,\qquad \qquad
d_n^\flat:=d_n^\flat(a,b,c,d) =
-\frac{q^nbd}{(1-q^nac)(1-q^nbd)}\mathcal{A}_n \\
&&
e_n^\sharp:=e_n^\sharp(a,b,c,d) =
-q^nac,
\qquad
e_n^\flat:=e_n^\flat(a,b,c,d) =
\frac{1}{(1-q^nac)(1-q^nbd)}\mathcal{A}_n, 
\text{ and }
\end{eqnarray*}
\begin{align*}
\mathcal{A}_n:=&\mathcal{A}_n(a,b,c,d) \\
=&
\frac{(1-q^{n-1}abcd)(1-q^{n+1})
(1-q^nab)(1-q^nac)(1-q^nad)(1-q^nbc)(1-q^nbd)(1-q^ncd)}
{(1-q^{2n-1}abcd)(1-q^{2n}abcd)^2(1-q^{2n+1}abcd)}.
\end{align*}

\begin{remark}
These matrices have the property that the coefficients in the 
$n$th row of $\dd+\ee$ are the coefficients in the three-term
recurrence for the Askey-Wilson polynomials.
\end{remark}

Lemma \ref{thm:USW-solution} below uses the following change of variables
between $a,b,c,d$ and $\alpha, \beta,\gamma, \delta$.
\begin{eqnarray}
a&=&\frac{1-q-\alpha+\gamma+\sqrt{(1-q-\alpha+\gamma)^2+4\alpha\gamma}}{2\alpha} \label{subs3}\\
c&=&\frac{1-q-\alpha+\gamma-\sqrt{(1-q-\alpha+\gamma)^2+4\alpha\gamma}}{2\alpha} \label{subs4}\\
b&=&\frac{1-q-\beta+\delta+\sqrt{(1-q-\beta+\delta)^2+4\beta\delta}}{2\beta} \label{subs5}\\
d&=&\frac{1-q-\beta+\delta-\sqrt{(1-q-\beta+\delta)^2+4\beta\delta}}{2\beta}. \label{subs6}
\end{eqnarray}
Note that this change of variables can be inverted via
\begin{align}
\alpha&=\frac{1-q}{1+ac+a+c},~~~~~ \label{subs1}
&\beta=\frac{1-q}{1+bd+b+d},\\
\gamma&=\frac{-(1-q)ac}{1+ac+a+c},~~~~~\label{subs2}
&\delta=\frac{-(1-q)bd}{1+bd+b+d}.
\end{align}

\begin{lemma}\cite{USW}\label{thm:USW-solution}
Let $D = \frac{1}{1-q}(\indicator+\dd)$ and 
$E = \frac{1}{1-q}(\indicator+\ee)$, where $\indicator$ 
 is the identity matrix.
Also, use the equations \eqref{subs3} through \eqref{subs6}
to express $D$ and $E$ in terms of $\alpha, \beta, \gamma, \delta$, and $q$.
Let $\langle W |= (1,0,0,\dots)$ and 
$|V\rangle = \langle W|^T$.  Then 
$D$, $E$, $\langle W|$, and $|V\rangle$ give a solution to the 
Matrix Ansatz of Theorem \ref{ansatz}.
\end{lemma}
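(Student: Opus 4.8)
The plan is to verify, one at a time, the three defining relations of the Matrix Ansatz of Theorem~\ref{ansatz}, taking full advantage of the semi-infinite tridiagonal shape of $\dd$ and $\ee$. The two boundary relations will collapse to a finite ``corner'' computation, while the bulk relation $DE-qED=D+E$ is the substantive part and will first be recast as a clean identity purely in $\dd$ and $\ee$.

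For the boundary relations I would argue as follows. Since $\langle W|=(1,0,0,\dots)$ simply reads off the top row, the relation $\langle W|(\alpha E-\gamma D)=\langle W|$ is equivalent to requiring the $(0,0)$ and $(0,1)$ entries of $\alpha E-\gamma D$ to equal $1$ and $0$ respectively, all further entries of the top row vanishing by tridiagonality. Using $D=\frac{1}{1-q}(\indicator+\dd)$, $E=\frac{1}{1-q}(\indicator+\ee)$ together with $d_0^\sharp=1$ and $e_0^\sharp=-ac$, the $(0,1)$ condition reads $\alpha(-ac)-\gamma=0$, i.e.\ $\gamma=-\alpha ac$, which is exactly the relation encoded in \eqref{subs1}--\eqref{subs2}; the $(0,0)$ condition is then checked by substituting $n=0$ into the formulas for $d_0^\natural,e_0^\natural$ and simplifying. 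The relation $(\beta D-\delta E)|V\rangle=|V\rangle$ is handled symmetrically: $|V\rangle=(1,0,0,\dots)^T$ selects the first column, the off-diagonal condition becomes $\beta d_0^\flat-\delta e_0^\flat=0$, which (using the common factor $\mathcal{A}_0/[(1-ac)(1-bd)]$ shared by $d_0^\flat,e_0^\flat$) reduces to $\delta=-\beta bd$, again encoded in \eqref{subs1}--\eqref{subs2}, while the diagonal condition is a finite substitution.

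The bulk relation is the heart of the matter. Expanding $D,E$ as above, one finds that $DE-qED=D+E$ is equivalent to the single matrix identity
$$\dd\ee - q\,\ee\dd = (1-q)\indicator.$$
Since $\dd$ and $\ee$ are tridiagonal, $\dd\ee-q\,\ee\dd$ is pentadiagonal, so it suffices to verify five families of scalar identities indexed by $n$: the $\pm2$ diagonals must vanish, the $\pm1$ diagonals must vanish, and the main diagonal must equal $1-q$. The two outermost diagonals vanish immediately: on the $+2$ diagonal the entry is $d_n^\sharp e_{n+1}^\sharp-q\,e_n^\sharp d_{n+1}^\sharp=(-q^{n+1}ac)-q(-q^n ac)=0$ using $d^\sharp=1$ and $e_n^\sharp=-q^nac$, and on the $-2$ diagonal the analogous entry $d_{n+1}^\flat e_n^\flat-q\,e_{n+1}^\flat d_n^\flat$ vanishes because $d_n^\flat,e_n^\flat$ share the factor $\mathcal{A}_n/[(1-q^nac)(1-q^nbd)]$, so that the $bd$-prefactor cancels in the combination.

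The main obstacle is the remaining three families — the $\pm1$ off-diagonals and the main diagonal — which are genuine identities in $a,b,c,d,q$ linking $d_n^\natural,e_n^\natural$ at levels $n$ and $n+1$ with $d_n^\flat,e_n^\flat$ and $\mathcal{A}_n$. For instance the $+1$ identity takes the shape $-q^nac\,d_n^\natural+e_{n+1}^\natural-q\,e_n^\natural+q^{n+1}ac\,d_{n+1}^\natural=0$, and on the main diagonal the two $d_n^\natural e_n^\natural$ terms combine to $(1-q)d_n^\natural e_n^\natural$, leaving an identity among $d_{n-1}^\flat,e_{n-1}^\flat,d_n^\flat,e_n^\flat,d_n^\natural,e_n^\natural$ that must total $1-q$. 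I would verify these by direct manipulation of the defining $q$-shifted factorial expressions, clearing the denominators $(1-q^{2n}abcd)$, $(1-q^nac)$, $(1-q^nbd)$, and matching coefficients as polynomials in $q^n$. These identities are not accidental: the matrices $\dd,\ee$ are assembled from the Jacobi (recurrence) matrix of the Askey-Wilson polynomials, so $\dd\ee-q\,\ee\dd=(1-q)\indicator$ is essentially a repackaging of the Askey-Wilson three-term recurrence, which provides both a sanity check and an alternative derivation. Because we have slightly altered the entries relative to \cite{USW} (see the footnote), I would carry out this verification explicitly rather than quote it.
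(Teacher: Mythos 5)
Your proposal is correct in its reductions, but note that the paper itself contains \emph{no proof} of this lemma: it is quoted from \cite{USW}, with only a footnote remarking that the slightly modified entries ``are easy to check'' to still satisfy the Ansatz. So you are supplying a verification where the paper supplies a citation. Your outline is sound and is essentially what that footnote gestures at: the equivalence $DE-qED=D+E \iff \dd\ee-q\,\ee\dd=(1-q)\indicator$ is exactly right (the cross terms $(1-q)(\dd+\ee)$ cancel against $D+E$ after clearing $(1-q)^2$); the vanishing of the $\pm2$ diagonals via $d_n^\sharp=1$, $e_n^\sharp=-q^nac$ and $d_n^\flat=-q^nbd\,e_n^\flat$ is correct as computed; and the boundary checks do collapse as you say, with the $(0,1)$ and $(1,0)$ conditions giving precisely $\gamma=-\alpha ac$ and $\delta=-\beta bd$, which is what \eqref{subs1}--\eqref{subs2} encode, and the $(0,0)$ conditions reducing to the true identities $e_0^\natural+ac\,d_0^\natural=a+c$ and $d_0^\natural+bd\,e_0^\natural=b+d$ (both follow since $d_0^\natural=\frac{(b+d)-bd(a+c)}{1-abcd}$ and $e_0^\natural=\frac{(a+c)-ac(b+d)}{1-abcd}$ after simplification).

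Two remarks. First, the substantive content --- the $\pm1$ and main-diagonal identities among $d_n^\natural, e_n^\natural, d_n^\flat, e_n^\flat, \mathcal{A}_n$ --- is correctly set up in your proposal (your displayed $+1$ identity is the right one) but is deferred to ``clearing denominators and matching coefficients''; that is where essentially all the labor of this lemma lives, and a complete write-up would have to grind it out. The identities are true, so nothing would fail, but as it stands this is an outline of the hard part rather than a proof of it. Second, there is a cheaper route to the paper's footnote that avoids redoing \cite{USW}'s computation for the modified entries: the paper's $\dd,\ee$ differ from the original USW matrices by conjugation by a diagonal matrix $S=\diag(s_0,s_1,\dots)$ with $s_0=1$ (the diagonal entries agree and the gauge-invariant products of opposite off-diagonal entries agree). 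Such a conjugation preserves all three Ansatz relations and fixes $\langle W|$ and $|V\rangle$, so the lemma for the modified matrices follows formally from the lemma for the original ones. Combining that observation with a citation of \cite{USW} would discharge the statement with far less algebra than the full entry-by-entry verification you propose.
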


Moreover, Uchiyama \cite{Uchiyama} observed that if 
$A:=DE-ED$, and $D$ and $E$ satisfy $DE-qED = D+E$, then 
it follows that $DA = qAD+A$ and $AE = qEA+A$.  Therefore 
we have the following.

\begin{corollary}\label{cor:ansatz}
Let $D$, $E$, $\langle W|$, and $|V\rangle$ be as in Lemma \ref{thm:USW-solution},
and set $A:=DE-ED$.  Then 
$D$, $E$, $A$, $\langle W|$, and $|V\rangle$ give a solution to the 
Matrix Ansatz of Theorem \ref{ansatz2}.
\end{corollary}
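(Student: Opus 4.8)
The plan is to verify the five relations of Theorem \ref{ansatz2} one at a time, exploiting the fact that the first three are inherited for free while the last two are a purely formal algebraic consequence of the third.

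First, the three relations $\langle W|(\alpha E - \gamma D) = \langle W|$, $(\beta D - \delta E)|V\rangle = |V\rangle$, and $DE - qED = D + E$ are exactly the hypotheses of the single-species Matrix Ansatz (Theorem \ref{ansatz}), and Lemma \ref{thm:USW-solution} asserts that the matrices $D$, $E$ together with the vectors $\langle W|$ and $|V\rangle$ satisfy all of them. So these three carry over verbatim with no further work; in particular the two boundary relations involve only $D$ and $E$ and are untouched by the introduction of $A$.

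The remaining task is to establish $DA = qAD + A$ and $AE = qEA + A$ for $A := DE - ED$. I would prove these abstractly, using only the quadratic relation $DE = qED + D + E$ and nothing about the explicit tridiagonal form of the matrices. For the first, I would expand $DA - qAD = D^2 E - DED - q(DED - ED^2)$ and then eliminate every occurrence of the pattern $DE$ by substituting $DE = qED + D + E$ (so that $D^2E$ and $DED$ are rewritten accordingly). After collecting terms, the $q$-weighted cubic contributions cancel and one is left with exactly $DE - ED = A$, giving $DA - qAD = A$. The argument for $AE = qEA + A$ is the mirror image: expand $AE - qEA = DE^2 - EDE - q(EDE - E^2D)$ and substitute for $DE^2$ and $EDE$ using the same relation; again the cubic terms cancel and leave $DE - ED = A$.

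I expect the main obstacle to be bookkeeping rather than anything conceptual: the computation lives in a noncommutative algebra, so one must be scrupulous about the order of factors and apply the rewriting $DE \mapsto qED + D + E$ consistently at each position where that pattern appears. The advantage of working at this abstract level is that the two identities then hold for \emph{any} $D, E$ satisfying $DE - qED = D + E$, so they apply in particular to the USW solution of Lemma \ref{thm:USW-solution}; combining this with the first three relations yields a full solution of the two-species Matrix Ansatz and completes the proof. This is precisely the observation attributed to Uchiyama in the sentence preceding the corollary.
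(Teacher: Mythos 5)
Your proposal is correct and follows essentially the same route as the paper: the first three relations are inherited from Lemma \ref{thm:USW-solution}, and the identities $DA = qAD + A$, $AE = qEA + A$ are deduced purely formally from $DE - qED = D + E$ with $A := DE - ED$, which is exactly the observation the paper attributes to Uchiyama (the paper cites it rather than carrying out the expansion, while you verify it explicitly). Your cancellation indeed works out, with the small bookkeeping note that the expansion leaves $(q-1)ED + D + E$, which equals $DE - ED = A$ after one more application of the relation.
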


In the rest of the paper, we will use the solution of the matrix ansatz in Corollary \ref{cor:ansatz}, 
which in turn fixes our definition of the following versions of the partition function.
(Note that one gets a definition of the partition function from every solution to the Matrix Ansatz;
using a different solution will simply rescale the partition function.)

\begin{definition}\label{def:ZN}
Let $Z_N(\xi) = \langle W| (\xi D+E)^N | V \rangle$ be the \emph{fugacity partition function} for the ASEP where $D$, $E$, 
$\langle W|$, and $|V\rangle$ are as in Lemma \ref{thm:USW-solution}.
We also let $Z_{N,r}(\xi) = [y^r] \frac{\langle W| (\xi D+E+yA)^N |V \rangle}{\langle W| A^r|V \rangle}$ 
be the \emph{fugacity partition function} for the two-species ASEP.
Finally we set $Z_N=Z_N(1)$ and $Z_{N,r} = Z_{N,r}(1)$ and refer to these quantities as \emph{partition functions}.
\end{definition}

\section{Koornwinder moments and partial Motzkin paths}\label{sec:Motzkin}

We start by reviewing the connection between the partition function $Z_N$ of the ASEP
and moments of Askey-Wilson polynomials.  We will then explain how 
to interpret $Z_N$ as a generating function for weighted Motzkin paths.
Finally we will use the
celebrated lemma of 
Karlin-McGregor-Lindstr\"om-Gessel-Viennot \cite{KM1, KM2, Lindstrom, GV}
on determinants and non-intersecting lattice paths
to express Koornwinder moments in terms of 
\emph{partial} weighted  Motzkin paths.

\subsection{The partition function of the ASEP, Askey-Wilson moments,
and Koornwinder moments}\label{sec:moments2}

Using results from  \cite{USW}, we proved in \cite{CSSW} that there is a close relationship between 
the fugacity partition function $Z_N$ of the ASEP and Askey-Wilson moments $\mu_N$.


\begin{theorem}\cite[Theorem 1.11]{CSSW}\label{thm:ZN-AW}
Recall that $Z_N(\xi) = Z_N(\xi; \alpha, \beta, \gamma, \delta;q) = 
\langle W| (\xi D+E)^N | V \rangle$ be the \emph{fugacity partition function} of the
ASEP, where $D$, $E$, 
$\langle W|$, and $|V\rangle$ are as in Lemma \ref{thm:USW-solution}.
Then 
the $N^{th}$ Askey-Wilson moment    is
equal to
$$
\mu_N(a,b,c,d\vert q)= 
\frac{(1-q)^N}{2^Ni^N} 
Z_N(-1;\alpha,\beta,\gamma,\delta;q),\footnote{The formula we give here
does not include the factor 
$\prod_{j=0}^{N-1}(\alpha\beta-\gamma\delta q^j)$, because we are working 
here with the partition function coming from the $D$ and $E$ from Lemma \ref{thm:USW-solution},
rather than the rescaled version which was used 
in \cite{CSSW}.}
$$
where $i^2=-1$ and
\begin{equation}\label{Subs2}
\alpha=\frac{1-q}{1-ac+ai+ci},~~~~~
\beta=\frac{1-q}{1-bd-bi-di},~~~~~
\gamma=\frac{(1-q)ac}{1-ac+ai+ci},~~~~~
\delta=\frac{(1-q)bd}{1-bd-bi-di}.
\end{equation}
\end{theorem}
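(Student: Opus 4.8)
The plan is to reduce the whole statement to a computation with tridiagonal matrices together with the standard dictionary relating orthogonal polynomials, their Jacobi matrix, and their moments. The first observation is that specializing the fugacity to $\xi=-1$ produces a very clean combination of the USW matrices: since $D=\frac{1}{1-q}(\indicator+\dd)$ and $E=\frac{1}{1-q}(\indicator+\ee)$ by Lemma~\ref{thm:USW-solution}, the identity matrices cancel and $E-D=\frac{1}{1-q}(\ee-\dd)$. Because $\langle W|=(1,0,0,\dots)$ and $|V\rangle=\langle W|^{T}$, the quantity $Z_N(-1)=\langle W|(E-D)^N|V\rangle$ is nothing but the top-left entry,
\[
Z_N(-1)=\frac{1}{(1-q)^N}\bigl((\ee-\dd)^N\bigr)_{0,0}.
\]
Thus the theorem will follow once I identify $\ee-\dd$, after the substitution \eqref{Subs2}, with a scalar multiple of the Jacobi matrix of the Askey--Wilson polynomials.

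The key translation is that \eqref{Subs2} expresses $\alpha,\beta,\gamma,\delta$ in terms of $a,b,c,d$ in precisely the way the USW relations \eqref{subs1}--\eqref{subs2} express them in terms of the internal parameters once those internal parameters are set to $ia,-ib,ic,-id$; one verifies this by matching denominators, e.g. $1+ac+a+c\mapsto 1-ac+i(a+c)$ under the replacement. I would therefore substitute $a\mapsto ia$, $b\mapsto -ib$, $c\mapsto ic$, $d\mapsto -id$ directly into the explicit entries of $\dd$ and $\ee$. Two bookkeeping identities then drive everything. For the \emph{diagonal} entries, one checks that $e_n^{\natural}(ia,-ib,ic,-id)=i\,e_n^{\natural}(a,b,c,d)$ and $d_n^{\natural}(ia,-ib,ic,-id)=-i\,d_n^{\natural}(a,b,c,d)$, while a direct expansion shows $e_n^{\natural}+d_n^{\natural}=B_n$ (both collapse to the same expression in $s=a+b+c+d$ and $s'=\sum a^{-1}$); hence the $n$th diagonal entry of the substituted $\ee-\dd$ equals $i\,B_n$. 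For the \emph{off-diagonal} entries, $(e_n^{\sharp}-d_n^{\sharp})(e_n^{\flat}-d_n^{\flat})=-(1+q^nac)\frac{(1+q^nbd)}{(1-q^nac)(1-q^nbd)}\mathcal{A}_n$; after the substitution (under which $ac\mapsto-ac$, $bd\mapsto-bd$, while $ab,ad,bc,cd,abcd$ are fixed) the factors $(1\pm q^nac)(1\pm q^nbd)$ cancel against those inside $\mathcal{A}_n$ and this collapses to $-\mathcal{A}_n$, where a short computation from the explicit $A_n,C_n$ gives $\mathcal{A}_n=A_nC_{n+1}$.

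With these in hand I would invoke the elementary fact that, for any tridiagonal matrix, the $(0,0)$ entry of its $N$th power depends only on the diagonal entries and on the products of paired super/sub-diagonal entries, since a closed walk from $0$ to $0$ on a path graph must traverse each edge equally often in each direction. The Jacobi matrix $J$ of multiplication by $2x$ in the Askey--Wilson basis has diagonal $B_n$ and off-diagonal product $J_{n,n+1}J_{n+1,n}=C_{n+1}A_n$, so $iJ$ has diagonal $iB_n$ and off-diagonal product $-A_nC_{n+1}$, matching the substituted $\ee-\dd$ exactly. Hence $\bigl((\ee-\dd)^N\bigr)_{0,0}=\bigl((iJ)^N\bigr)_{0,0}=i^N(J^N)_{0,0}$. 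Finally, the moment--Jacobi identity gives $(J^N)_{0,0}=\int(2x)^N\,d\mu=2^N\mu_N$, using that $P_0=1$ and that the weight has total mass $h_0/h_0=1$. Combining, $Z_N(-1)=\frac{(2i)^N}{(1-q)^N}\mu_N$, which rearranges to the claimed formula.

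The main obstacle is the signed bookkeeping of the second paragraph: the factor $i$ must distribute so that the diagonal acquires exactly one factor of $i$ while the off-diagonal product acquires $i^2=-1$, and this works only because the plus/minus pattern of the brackets defining $d_n^{\natural}$ and $e_n^{\natural}$ conspires correctly, and likewise because the cancellation of $(1\pm q^nac)(1\pm q^nbd)$ against the matching factors inside $\mathcal{A}_n$ goes through term by term. Establishing $e_n^{\natural}+d_n^{\natural}=B_n$ and $\mathcal{A}_n=A_nC_{n+1}$ is routine but is where all the genuine content lies; once they are verified, the passage through the Jacobi matrix and the moment functional is purely formal.
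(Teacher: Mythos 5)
The paper does not actually prove Theorem \ref{thm:ZN-AW}: it is imported from \cite[Theorem 1.11]{CSSW}, so there is no internal proof to compare yours against. Your argument is correct, and it is in essence a self-contained reconstruction of the Uchiyama--Sasamoto--Wadati/CSSW mechanism specialized at fugacity $\xi=-1$: your substitution $(a,b,c,d)\mapsto(ia,-ib,ic,-id)$ is exactly the rescaling $(a\sqrt{\xi},\,b/\sqrt{\xi},\,c\sqrt{\xi},\,d/\sqrt{\xi})$ at $\sqrt{\xi}=i$ that underlies the fugacity version of the partition-function/moment dictionary. I checked the computational claims on which your proof rests and they all hold: $d_n^{\natural}+e_n^{\natural}=B_n$ (both brackets reduce to the same expression in $s=a+b+c+d$ and $abcd\,s'=abc+abd+acd+bcd$), $\mathcal{A}_n=A_nC_{n+1}$ (immediate from the displayed formulas), the scalings $d_n^{\natural}\mapsto -i\,d_n^{\natural}$ and $e_n^{\natural}\mapsto i\,e_n^{\natural}$, and the collapse of the off-diagonal product $(e_n^{\sharp}-d_n^{\sharp})(e_n^{\flat}-d_n^{\flat})$ to $-\mathcal{A}_n$ after substitution. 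Together with the closed-walk invariance and the identity $(J^N)_{0,0}=2^N\mu_N$ (valid here because the paper's normalization of the weight gives $\mu_0=1$), this yields exactly the stated formula.

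Two points deserve sharper wording, though neither is a gap. First, after substitution $\ee-\dd$ is not literally a scalar multiple of the Jacobi matrix $J$: its individual off-diagonal entries differ from those of $iJ$, and only the diagonal and the products $M_{n,n+1}M_{n+1,n}$ agree. Your walk-counting argument is precisely what makes this weaker equivalence sufficient for the $(0,0)$ entry of the $N$-th power, so the proof stands, but ``identify with a scalar multiple'' should be read in this diagonal-conjugation sense. Second, inverting \eqref{Subs2} through \eqref{subs3}--\eqref{subs6} determines the internal parameters only as unordered pairs $\{ia,ic\}$ and $\{-ib,-id\}$ (choice of square-root branch); this is harmless because every entry of $\dd$ and $\ee$ depends on the parameters only through $a+c$, $ac$, $b+d$, $bd$ --- a fact you use implicitly throughout and should state once to close the loop.
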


Recall from Lemma \ref{lem:Koornwindermoment}
our determinantal expression for Koornwinder moments.
Because of the close relationship between the partition function of the 
ASEP and Askey-Wilson moments 
(see Theorem \ref{thm:ZN-AW}), 
we define another version of Koornwinder moments as follows.

\begin{definition}\label{def:Koornwindermoment}
Given a partition $\lambda = (\lambda_1,\lambda_2,\dots, \lambda_m)$, we define
the \emph{Koornwinder moment} at $q=t$ to be  
\begin{equation}\label{def:moment}
K_{\lambda}(\xi)=K_{\lambda}(\xi; \alpha, \beta, \gamma, \delta; q) = \frac{\det(Z_{\lambda_i+m-i+m-j}(\xi))_{i,j=1}^m}{\det(Z_{2m-i-j}(\xi))_{i,j=1}^m},
\end{equation}
where $Z_N$ is as in Definition \ref{def:ZN}.
\end{definition}

The fact that we refer to the quantities $K_{\lambda}(\xi)$ as \emph{Koornwinder moments} is justified by the 
following result.

\begin{proposition}
Let $\lambda=(\lambda_1,\dots,\lambda_m)$ be a partition.
We have that $$M_{\lambda}(a,b,c,d|q) = \left(\frac{1-q}{2i}\right)^{|\lambda|} K_{\lambda}(-1; \alpha, \beta, \gamma, \delta;q),$$
where $|\lambda| = \sum_i \lambda_i$, and 
 $\alpha, \beta, \gamma, \delta$ are related to $a,b,c,d$ as in \eqref{Subs2}.
\end{proposition}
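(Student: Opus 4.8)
The plan is to compare the two determinantal formulas directly. By Lemma~\ref{lem:Koornwindermoment}, the genuine Koornwinder moment $M_\lambda$ equals a ratio of determinants in the Askey-Wilson moments $\mu_k$, while by Definition~\ref{def:Koornwindermoment} (equation~\eqref{def:moment}) the quantity $K_\lambda(\xi)$ is the analogous ratio of determinants in the fugacity partition functions $Z_N(\xi)$. Since Theorem~\ref{thm:ZN-AW} supplies the scalar identity $\mu_N = \left(\frac{1-q}{2i}\right)^N Z_N(-1)$ (with $\alpha,\beta,\gamma,\delta$ given by \eqref{Subs2}), the whole problem reduces to substituting this relation into the formula of Lemma~\ref{lem:Koornwindermoment} and tracking how the scalar $c := \frac{1-q}{2i}$ propagates through the numerator and denominator determinants.

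The key step is multilinearity of the determinant. Writing $\mu_k = c^k Z_k(-1)$, the $(i,j)$ entry of the numerator matrix of $M_\lambda$ is $c^{\lambda_i + m - i + m - j} Z_{\lambda_i+m-i+m-j}(-1)$. The crucial observation is that the exponent factorizes as $c^{\lambda_i+m-i+m-j} = c^{\lambda_i+m-i}\cdot c^{m-j}$, splitting cleanly into a part depending only on the row index $i$ and a part depending only on the column index $j$. Pulling $c^{\lambda_i+m-i}$ out of row $i$ and $c^{m-j}$ out of column $j$ would give
$$\det(\mu_{\lambda_i+m-i+m-j})_{i,j=1}^m = c^{|\lambda|+2\binom{m}{2}}\,\det(Z_{\lambda_i+m-i+m-j}(-1))_{i,j=1}^m,$$
using $\sum_{i=1}^m(m-i)=\binom{m}{2}$. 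The identical manipulation on the denominator, where the $(i,j)$ entry has exponent $c^{2m-i-j}=c^{m-i}\cdot c^{m-j}$, yields
$$\det(\mu_{2m-i-j})_{i,j=1}^m = c^{2\binom{m}{2}}\,\det(Z_{2m-i-j}(-1))_{i,j=1}^m.$$

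Taking the ratio, the column contributions $c^{\binom{m}{2}}$ together with one factor of $c^{\binom{m}{2}}$ from the rows cancel between numerator and denominator, leaving exactly the net factor $c^{|\lambda|}=\left(\frac{1-q}{2i}\right)^{|\lambda|}$ times $K_\lambda(-1)$, which is the asserted identity. I do not expect any genuine obstacle here: the argument is pure bookkeeping of exponents, and the only point requiring care is the clean separation of the scalar's exponent into a row piece and a column piece, which is precisely what makes multilinearity applicable and guarantees that the $\lambda$-independent contributions cancel in the ratio. A minor consistency check worth recording is that the change of variables \eqref{Subs2} is exactly the one under which Theorem~\ref{thm:ZN-AW} holds, so the parameters $\alpha,\beta,\gamma,\delta$ match on both sides of the final identity.
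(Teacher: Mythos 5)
Your proposal is correct and follows essentially the same route as the paper's proof: substitute the scalar relation $\mu_N = \left(\frac{1-q}{2i}\right)^N Z_N(-1)$ from Theorem \ref{thm:ZN-AW} into the two determinantal formulas, pull the row factors $c^{\lambda_i+m-i}$ and column factors $c^{m-j}$ out by multilinearity, and observe that the $\lambda$-independent power $c^{m(m-1)}$ cancels in the ratio, leaving $c^{|\lambda|}$. The paper performs the identical bookkeeping, merely starting from $K_\lambda(-1)$ and substituting in the opposite direction.
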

\begin{proof}
Let $\theta = \frac{1-q}{2i}.$
So $Z_N(-1; \alpha, \beta, \gamma, \delta;q) = \theta^{-N} \mu_N(a,b,c,d|q).$
Then we have that 
\begin{align*}
K_{\lambda}(-1) &= \frac{ \det(Z_{\lambda_i+m-i+m-j}(-1))_{{i,j}=1}^m}{\det(Z_{2m-i-j}(-1))_{{i,j}=1}^m}\\
 &= \frac{ \det(\theta^{-(\lambda_i+m-i+m-j)}\mu_{\lambda_i+m-i+m-j})_{{i,j}=1}^m}{\det(\theta^{-(2m-i-j)}\mu_{2m-i-j})_{{i,j}=1}^m}\\
 &= \frac{ \theta^{-(\lambda_1+ \dots + \lambda_m + m(m-1))}  \det(\mu_{\lambda_i+m-i+m-j})_{{i,j}=1}^m}{\theta^{-m(m-1)}\det(\mu_{2m-i-j})_{{i,j}=1}^m}\\
 &= \theta^{-(\lambda_1 + \dots + \lambda_m)} M_{\lambda}.
\end{align*}

\end{proof}

From now on, when we refer to Koornwinder moments, we will be referring 
to Definition \ref{def:Koornwindermoment}.

Because of its probabilistic interpretation, the quantity
$Z_{N}$ must be positive whenever we specialize the parameters
$\alpha, \beta, \gamma, \delta$, and $q$ to be positive numbers between
$0$ and $1$.
Moreover, we proved in 
\cite{CW-Duke1, CW-Duke2} that (up to a normalizing factor)
$Z_{N}$ is a polynomial 
in $\alpha, \beta, \gamma, \delta, q$ with positive coefficients; 
it can be expressed as a sum over \emph{staircase tableaux}.  
This prompted Rains to conjecture the following.

\begin{conjecture}\label{conj:pos}
The Koornwinder moment 
$K_{\lambda}(\xi)$ is a polynomial in 
$\alpha, \beta, \gamma, \delta, q,\xi$ with positive coefficients (up to a 
normalizing factor).
\end{conjecture}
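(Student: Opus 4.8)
The plan is to reduce the general case to the complete homogeneous case, where positivity is essentially known, and then to fight the sign cancellations introduced by that reduction. First I would invoke the Jacobi-Trudi-type formula of Corollary \ref{thm:JT}, which expresses $K_\lambda(\xi)$ as a determinant whose entries are the complete homogeneous moments $K_{(n,0,\dots,0)}(\xi)$. By Theorem \ref{thm:main}, each such entry equals $(1-q)^{-r} Z_{N,r}(\xi)$, and the tableaux formula announced in the sequel \cite{CMW} shows that $Z_{N,r}(\xi)$ is, up to a normalizing factor, a polynomial in $\alpha,\beta,\gamma,\delta,q,\xi$ with nonnegative coefficients. Thus every entry of the Jacobi-Trudi matrix is already a positive polynomial, and the conjecture is settled outright for single-row $\lambda$.

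The difficulty is that a determinant of positive polynomials need not have positive coefficients: the alternating signs must be shown to cancel. The standard remedy, and the one I would pursue, is to realize the whole determinant combinatorially rather than expanding it. Following the strategy already set up in Section \ref{sec:Motzkin}, I would apply the Karlin-McGregor-Lindstr\"om-Gessel-Viennot lemma \cite{KM1, KM2, Lindstrom, GV} to interpret the Jacobi-Trudi determinant as a generating function for $m$-tuples of non-intersecting weighted lattice paths. If the weight of each path is a nonnegative polynomial in the parameters, then LGV produces a manifestly nonnegative sum over non-intersecting families, and positivity follows at once. Equivalently, one can look for a sign-reversing involution on signed tuples of paths (or of two-species tableaux) that cancels all intersecting families, exactly as in the classical Lindstr\"om-Gessel-Viennot proof that Schur functions are monomial-positive.

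The main obstacle lies in choosing the right combinatorial model. The partial Motzkin path description of Section \ref{sec:Motzkin} does yield non-intersecting families via LGV, but its step weights are built from the matrix entries $d_n^\natural, e_n^\natural,\dots$, which are rational functions of $a,b,c,d$ -- hence of $\alpha,\beta,\gamma,\delta,q$ -- and are \emph{not} monomials with nonnegative coefficients. So that model establishes positivity only in the Motzkin basis, not in the parameters. On the other hand, the staircase and two-species tableaux of \cite{CW-Duke1, CW-Duke2, CMW} give manifest polynomial positivity for a single entry, but they do not obviously carry a linear order and a non-intersecting structure compatible with the determinant of Corollary \ref{thm:JT}. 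The heart of a proof would therefore be to construct a single weighted lattice-path (or tableaux) model in which, simultaneously, (a) each $K_{(n,0,\dots,0)}(\xi)$ is the generating function for paths whose weights are nonnegative polynomials in the parameters, and (b) the non-intersecting condition produced by LGV matches exactly the Jacobi-Trudi determinant. Reconciling the rational-function positivity of the Motzkin model with the polynomial positivity of the tableaux model is the essential gap, and I expect it to be where most of the work -- and most of the risk -- resides.
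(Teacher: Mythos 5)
You have not proved the statement, and neither does the paper: this is a genuine conjecture (due to Rains), and it is left open there. The paper offers only partial evidence: (i) for single-row shapes $(N-r,0,\dots,0)$, Theorem \ref{thm:main} plus the probabilistic interpretation give positivity of the \emph{evaluation} at parameters in $(0,1)$, while coefficient positivity rests on the tableaux formula for $Z_{N,r}$ announced in the sequel \cite{CMW}, which is cited but not proved in this paper; and (ii) at the specialization $\xi=q=1$, Section \ref{sec:specialize} proves the conjecture completely. Your proposal, to its credit, stops short of claiming a full proof and names its own gap; my job here is to confirm that the gap you name is exactly the obstruction, and to point out that the paper's actual evidence travels a different road.

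The LGV strategy you propose is in fact already carried out in the paper: the proofs of Theorem \ref{thm:det-Motzkin} and Theorem \ref{thm:JT0} interpret the Jacobi--Trudi numerator and denominator as generating functions for non-intersecting families of partial Motzkin paths. The reason this does not settle the conjecture is the one you identify: the edge weights are the entries $c_{ij}$ of the tridiagonal matrix $\xi D+E$, which are rational functions of $a,b,c,d$ with no sign control (for instance $e_n^\sharp=-q^nac$), so non-intersecting families yield \emph{identities}, not coefficient positivity in $\alpha,\beta,\gamma,\delta,q,\xi$. Your caution about determinants of positive polynomials is also warranted: already $\det\left(\begin{smallmatrix} 1+x & 2\\ 2 & 1+x\end{smallmatrix}\right)=x^2+2x-3$, so Corollary \ref{thm:JT} with positive entries cannot close the argument by itself. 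What the paper does instead, at $\xi=q=1$, bypasses LGV and sign-reversing involutions entirely: Lemmas \ref{Lem:1} and \ref{Lem:2} show that appending a zero part or adding a full column to $\lambda$ multiplies $K_\lambda$ by an explicit, manifestly positive factor; iterating (Proposition \ref{prop:recurrence}) collapses the Jacobi--Trudi determinant into the hook-length product of Theorem \ref{thm:specialize}, and positivity follows by induction from $K_{(0,\dots,0)}=1$. No analogue of those multiplicative recurrences is known for general $q$ and $\xi$, and constructing the single combinatorial model you ask for in (a)--(b) --- positive polynomial weights compatible with the non-intersecting structure of the determinant --- is precisely the open problem; nothing in this paper, or in your proposal, supplies it.
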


Since Theorem \ref{thm:main} implies that 
$K_{(N-r,0,0,\dots,0)}(\xi)$ is proportional to the fugacity partition function
$Z_{N,r}(\xi)$ for the two-species ASEP, it follows that 
the Koornwinder moment
$K_{(N-r,0,0,\dots,0)}$ is positive when we specialize the parameters 
$\alpha, \beta, \gamma, \delta, q$ to be positive numbers between
$0$ and $1$.  Moreover, in \cite{CMW}, we will give a tableaux interpretation
of $Z_{N,r}$, which implies that $Z_{N,r}(\xi)$ is 
a polynomial in $\xi, \alpha, \beta, \gamma, \delta$, and $q$,
with positive coefficients.
Finally, see Section \ref{sec:specialize} for some evidence towards Conjecture \ref{conj:pos}
when $q = \xi = 1$.

\subsection{Weighted Motzkin paths and the partition function of the ASEP}

Recall that a \emph{Motzkin path} of \emph{length $N$} is a path in the $xy$ plane 
from $(0,0)$ to $(N,0)$ which consists of steps northeast $(1,1)$, 
east $(1,0)$, and southeast $(1,-1)$, and never dips below the $x$-axis.
One often associates a \emph{weight} to each of the three kinds of 
steps, based on the height at which the step begins.  The weight of a given 
Motzkin path is then the product of the weights of all of its steps, and such 
a path is called a \emph{weighted Motzkin path}.
If $\mathcal{C}=(c_{ij})$ is a tridiagonal matrix rows and columns indexed by the non-negative integers, then 
we can use each nonzero  entry $c_{ij}$ to weight a step in a Motzkin path 
that starts at height $i$ and ends at height $j$.  We call such a Motzkin path a 
\emph{$\mathcal{C}$-Motzkin path}, though we will sometimes drop the $\mathcal{C}$ if it is understood.

The following lemma is obvious.
\begin{lemma}\label{lem:Motzkin1}
Let $\mathcal{C}$ be a tridiagonal matrix as above, 
and let $\langle W |= (1,0,0,\dots)$ and 
$|V\rangle = \langle W|^T$.  Let $\mathcal{Z}_N = \langle W| \mathcal{C}^N |V \rangle$.
Then $\mathcal{Z}_N$ is the generating function for all $\mathcal{C}$-Motzkin paths of length $N$,
i.e. it is the sum of the weights of all $\mathcal{C}$-Motkin paths
from $(0,0)$ to $(N,0)$.
\end{lemma}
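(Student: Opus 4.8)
The plan is to expand the matrix power $\mathcal{C}^N$ entrywise and recognize each term as a weighted Motzkin path. Since $\mathcal{C} = (c_{ij})$ is tridiagonal with rows and columns indexed by the nonnegative integers, the only nonzero entries $c_{ij}$ occur when $j \in \{i-1, i, i+1\}$. Thus $c_{ij}$ is nonzero exactly when a step from height $i$ to height $j$ is one of the three allowed Motzkin steps: a northeast step ($j=i+1$), an east step ($j=i$), or a southeast step ($j=i-1$). This is precisely the combinatorial data encoded by a $\mathcal{C}$-Motzkin path.

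First I would write out the matrix product explicitly. By the definition of matrix multiplication,
\begin{equation*}
\langle W| \mathcal{C}^N | V \rangle = \sum_{k_0, k_1, \dots, k_N} \langle W|_{k_0} \, c_{k_0 k_1} c_{k_1 k_2} \cdots c_{k_{N-1} k_N} \, |V\rangle_{k_N},
\end{equation*}
where each index $k_i$ ranges over the nonnegative integers. Since $\langle W| = (1,0,0,\dots)$ and $|V\rangle = \langle W|^T$, the boundary covectors force $k_0 = 0$ and $k_N = 0$; every other choice contributes zero. Hence the sum reduces to
\begin{equation*}
\mathcal{Z}_N = \sum_{\substack{k_1, \dots, k_{N-1} \geq 0}} c_{0 k_1} c_{k_1 k_2} \cdots c_{k_{N-1} 0}.
\end{equation*}

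Next I would establish the bijection between the surviving terms and $\mathcal{C}$-Motzkin paths. A sequence of heights $(k_0, k_1, \dots, k_N)$ with $k_0 = k_N = 0$ and each $k_i \geq 0$ for which the product $\prod_{i=0}^{N-1} c_{k_i k_{i+1}}$ is nonzero is exactly a lattice path from $(0,0)$ to $(N,0)$ that stays weakly above the $x$-axis and whose consecutive heights differ by at most $1$ (the tridiagonality forces $|k_{i+1} - k_i| \leq 1$). This is the definition of a Motzkin path. Under this identification, the factor $c_{k_i k_{i+1}}$ is precisely the weight assigned to the $(i+1)$-st step, which begins at height $k_i$, and so $\prod_{i=0}^{N-1} c_{k_i k_{i+1}}$ is exactly the weight of the corresponding path. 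Summing over all such sequences therefore yields the generating function $\sum_{\text{paths}} \wt(\text{path})$, completing the identification.

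This lemma is essentially bookkeeping, so there is no serious obstacle; the only point requiring a moment's care is confirming that the boundary vectors $\langle W|$ and $|V\rangle$ correctly pin the path to start and end at height $0$, and that the nonnegativity constraint $k_i \geq 0$ (built into the index set of the semi-infinite matrix $\mathcal{C}$) is exactly what enforces that the path never dips below the $x$-axis. Both facts are immediate from the definitions, which is why the statement is labeled obvious.
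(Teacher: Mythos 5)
Your proof is correct: the paper itself offers no written proof (it simply declares the lemma ``obvious''), and the standard argument it has in mind is precisely the one you give --- expanding $\langle W|\mathcal{C}^N|V\rangle$ as a sum over index sequences, using tridiagonality to force steps of size at most one, the boundary vectors to pin $k_0=k_N=0$, and the nonnegative indexing to keep paths weakly above the axis. Nothing is missing, and your explicit attention to the two ``bookkeeping'' points (endpoint pinning and nonnegativity) is exactly what makes the lemma immediate.
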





We will now give a general result relating a certain ratio of determinants to 
partial Motzkin paths, which we will use to subsequently relate Koornwinder moments to partial Motzkin paths. 
Note that it is well-known that 
there is a link between (partial) Motzkin paths and 
moments of (one-variable) orthogonal polynomials, see for example
\cite[Chapter 1, Proposition 17]{Viennot1} and also \cite{Viennot2}.
However, our treatment here will be self-contained.

We say a \emph{partial Motzkin path} is a path in the $xy$ plane 
from $(0,0)$ to $(N,r)$ which consists of steps northeast $(1,1)$, 
east $(1,0)$, and southeast $(1,-1)$, and never dips below the $x$-axis.
As before, we can use entries $c_{ij}$ of a tridiagonal matrix $\mathcal{C}$ to associate the 
weight $c_{ij}$ to a step which starts at height $i$ and ends at height $j$.
The weight of a  partial
Motzkin  path is then the product of the weights of all of its steps.
We have the following result.

We define the following ratio of determinants: \begin{equation} \label{eq:genmoment} 
\K_{(\lambda_1,\dots,\lambda_n)} = 
\frac{\det \begin{vmatrix}
\Z_{\lambda_1+2n-2} & \Z_{\lambda_1+2n-3} & \dots & \Z_{\lambda_1+n-1}\\
\Z_{\lambda_2+2n-3} & \Z_{\lambda_2+2n-4} & \dots & \Z_{\lambda_2+n-2}\\
\vdots & \vdots & & \vdots \\
\Z_{\lambda_n+n-1} & \Z_{\lambda_n+n-2} & \dots & \Z_{\lambda_n}
\end{vmatrix}}
{\det \begin{vmatrix}
\Z_{2n-2} & \Z_{2n-3} & \dots & \Z_{n-1}\\
\Z_{2n-3} & \Z_{2n-4} & \dots & \Z_{n-2}\\
\vdots & \vdots & & \vdots \\
\Z_{n-1} & \Z_{n-2} & \dots & \Z_0
\end{vmatrix}}.
\end{equation}

\begin{theorem}\label{thm:det-Motzkin}
Let $\mathcal{C}$ be a tridiagonal matrix, and let $\mathcal{C}\Motz(N,r)$ be the generating function for all 
$\mathcal{C}$- partial Motzkin paths which start at $(0,0)$ and end at $(N,r)$.
Let $\langle W |= (1,0,0,\dots)$, 
$|V\rangle = \langle W|^T$,
and $|V^r \rangle = (0,\dots,0,1,0,0,\dots)^T$, where the $1$ is in the $r$th position
(and coordinates are indexed by the non-negative integers).
Let $\mathcal{Z}_N = \langle W| \mathcal{C}^N |V \rangle$
	and $k_r = \prod_{i=0}^{r-1} c_{i,i+1}.$
Then we have that 
\begin{equation}\label{eq:moment}
\frac{1}{k_r} \mathcal{C}\Motz(N,r)
= \frac{1}{k_r} 
{\langle W|  \mathcal{C}^N | V^r\rangle} = 
\K_{(N-r,0,0,\dots,0)},
\end{equation}
where there are precisely $r$ $0$'s in $(N-r,0,0,\dots,0)$.
\end{theorem}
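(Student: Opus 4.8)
The first equality, $\mathcal{C}\Motz(N,r)=\langle W|\mathcal{C}^N|V^r\rangle$, is immediate from the transfer-matrix interpretation of matrix multiplication: the $(0,r)$ entry of $\mathcal{C}^N$ is the sum, over all length-$N$ walks $0=h_0,h_1,\dots,h_N=r$ on the heights $\{0,1,2,\dots\}$, of $\prod_{s}c_{h_{s-1},h_s}$, and since $\mathcal{C}$ is tridiagonal these walks are precisely the $\mathcal{C}$-partial Motzkin paths from $(0,0)$ to $(N,r)$. As $\langle W|$ and $|V^r\rangle$ select row $0$ and column $r$, we get $\langle W|\mathcal{C}^N|V^r\rangle=(\mathcal{C}^N)_{0,r}=\mathcal{C}\Motz(N,r)$. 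Writing $n=r+1$, it remains to prove the purely algebraic identity $\langle W|\mathcal{C}^N|V^r\rangle=k_r\,\K_{(N-r,0,\dots,0)}$.

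The engine of the proof is a single linear relation. Because $\mathcal{C}$ is tridiagonal with nonzero subdiagonal entries, the vector $\mathcal{C}^m|V\rangle$ has its top nonzero coordinate in position $m$, so $|V\rangle,\mathcal{C}|V\rangle,\dots,\mathcal{C}^r|V\rangle$ form a triangular basis of $\mathrm{span}(|V^0\rangle,\dots,|V^r\rangle)$. Hence there are unique scalars $\beta_0,\dots,\beta_r$ with $|V^r\rangle=\sum_{m=0}^r\beta_m\,\mathcal{C}^m|V\rangle$. To pin them down I would pair this with $\langle W|\mathcal{C}^s$ for $0\le s\le r$: the left side becomes $(\mathcal{C}^s)_{0,r}$, which vanishes for $s<r$ (reaching height $r$ requires at least $r$ up-steps) and equals $k_r=\prod_{i=0}^{r-1}c_{i,i+1}$ for $s=r$, while the right side becomes $\sum_{m=0}^r\beta_m\Z_{s+m}$. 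Thus $(\beta_0,\dots,\beta_r)$ solves the Hankel system $\sum_{m=0}^r\beta_m\Z_{s+m}=k_r\,\indicator(s=r)$, whose coefficient matrix is $H=(\Z_{s+m})_{s,m=0}^r$.

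Next I would pair the relation with $\langle W|\mathcal{C}^N$ to obtain $\langle W|\mathcal{C}^N|V^r\rangle=\sum_{m=0}^r\beta_m\Z_{N+m}$. Solving the Hankel system by Cramer's rule gives $\beta_m=k_r(-1)^{r+m}\det(H_{\hat r,\hat m})/\det H$, where $H_{\hat r,\hat m}$ deletes row $r$ and column $m$; substituting and recognizing the resulting alternating sum as a cofactor expansion along a last row yields
$$\langle W|\mathcal{C}^N|V^r\rangle=\frac{k_r}{\det H}\,\det\tilde H,$$
where $\tilde H$ is $H$ with its last row replaced by $(\Z_N,\Z_{N+1},\dots,\Z_{N+r})$. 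Finally I would identify these two determinants with the numerator and denominator of $\K_{(N-r,0,\dots,0)}$: the reversal $i\mapsto n+1-i$, $j\mapsto n+1-j$ (which leaves a determinant unchanged) carries $\det(\Z_{2n-i-j})_{i,j=1}^n$ to $\det(\Z_{i+j-2})_{i,j=1}^n=\det H$, and carries the numerator $\det(\Z_{\lambda_i+2n-i-j})$ with $\lambda=(N-r,0,\dots,0)$ to $\det\tilde H$. This gives $\langle W|\mathcal{C}^N|V^r\rangle=k_r\,\K_{(N-r,0,\dots,0)}$, completing the proof.

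The main obstacle is not any single hard computation but rather (i) spotting the linear relation above and realizing that pairing against the powers $\langle W|\mathcal{C}^s$ produces exactly a Hankel moment system, and (ii) the careful index and sign bookkeeping needed to match the Cramer-rule cofactor expansion with the determinants defining $\K$ — in particular the row-and-column reversal that converts the anti-diagonal subscripts $2n-i-j$ into a genuine Hankel array. One must also work under the genericity hypotheses (nonvanishing subdiagonal entries of $\mathcal{C}$ and $\det H\neq 0$) under which the relation exists and is unique; the identity then holds as an identity of rational functions. I would note that this argument is the algebraic shadow of the Karlin--McGregor--Lindstr\"om--Gessel--Viennot picture: $\det H$ and $\det\tilde H$ count non-intersecting families of weighted Motzkin paths, with the extra factor $k_r$ recording the up-steps of a single partial path from height $0$ to height $r$. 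Carrying out that combinatorial route directly would instead force one to handle the fact that the natural sources and sinks all lie on the $x$-axis, which is exactly the technical nuisance the linear-algebra approach avoids.
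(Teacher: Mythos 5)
Your proof is correct, but it takes a genuinely different route from the paper's. The paper proves this theorem combinatorially: it realizes the denominator $\det(\Z_{2r-i-j})$ and the numerator as weight matrices of two explicit planar acyclic graphs whose sources and sinks lie on the $x$-axis, and then invokes the Karlin--McGregor--Lindstr\"om--Gessel--Viennot lemma, observing that the denominator admits a \emph{unique} vertex-disjoint path family while every disjoint family for the numerator is forced except for one free partial Motzkin path reaching height $r$; the ratio of generating functions then collapses to $\mathcal{C}\Motz(N,r)/k_r$. You instead run a purely linear-algebraic argument: expand $|V^r\rangle$ in the Krylov-type triangular basis $\mathcal{C}^m|V\rangle$, $0\le m\le r$, pair against $\langle W|\mathcal{C}^s$ to see that the coefficients solve the Hankel system $\sum_m \beta_m\Z_{s+m}=k_r\,\indicator(s=r)$, and then use Cramer's rule plus a cofactor expansion and a row-and-column reversal to identify $\langle W|\mathcal{C}^N|V^r\rangle$ with $k_r$ times the ratio of determinants defining $\K_{(N-r,0,\dots,0)}$. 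Your index and sign bookkeeping checks out (the simultaneous reversal $i\mapsto n+1-i$, $j\mapsto n+1-j$ indeed preserves determinants and converts the anti-diagonal array into the Hankel array $H$), and your handling of genericity is legitimate: on the locus $\det H\neq 0$ the relevant sub/superdiagonal entries are automatically nonvanishing, so the rational-function identity argument closes the gap. Your approach is essentially the classical Jacobi-matrix/orthogonal-polynomial connection (the $\beta_m$ are, up to scale, the coefficients of the $r$-th monic orthogonal polynomial for the moment functional $\Z$), which the paper explicitly acknowledges as known (citing Viennot) but chooses not to use. What each buys: your argument avoids path pictures entirely and makes the moment-theoretic content transparent; the paper's KMLGV argument is self-contained, needs no nondegeneracy discussion, and — more importantly — its graphical setup is reused verbatim to prove the Jacobi--Trudi formula (Theorem \ref{thm:JT0}), where the numerator matrix involves a general partition $\lambda$ and the analogous linear-algebra manipulation would be substantially less direct.
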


\begin{proof}
The leftmost equality in \eqref{eq:moment} is obvious.  To relate these quantities to the ratio of determinants 
\begin{equation}
\K_{(N-r,0,0,\dots,0)} = 
\frac{\det \begin{vmatrix}
\Z_{N+r} & \Z_{N+r-1} & \dots & \Z_N\\
\Z_{2r-1} & \Z_{2r-2} & \dots & \Z_{r-1}\\
\vdots & \vdots & & \vdots \\
\Z_r & \Z_{r-1} & \dots & \Z_0
\end{vmatrix}}
{\det \begin{vmatrix}
\Z_{2r} & \Z_{2r-1} & \dots & \Z_r\\
\Z_{2r-1} & \Z_{2r-2} & \dots & \Z_{r-1}\\
\vdots & \vdots & & \vdots \\
\Z_r & \Z_{r-1} & \dots & \Z_0
\end{vmatrix}}
\end{equation}
at the right, 
we now use the well-known
Karlin-McGregor-Lindstr\"om-Gessel-Viennot Lemma \cite{KM1, KM2, Lindstrom, GV}
(which we will henceforth refer to as the KMLGV Lemma).  This lemma  
will give a combinatorial
interpretation of both the numerator and denominator.

\begin{figure}[h]
\centering
\includegraphics[height=1.5in]{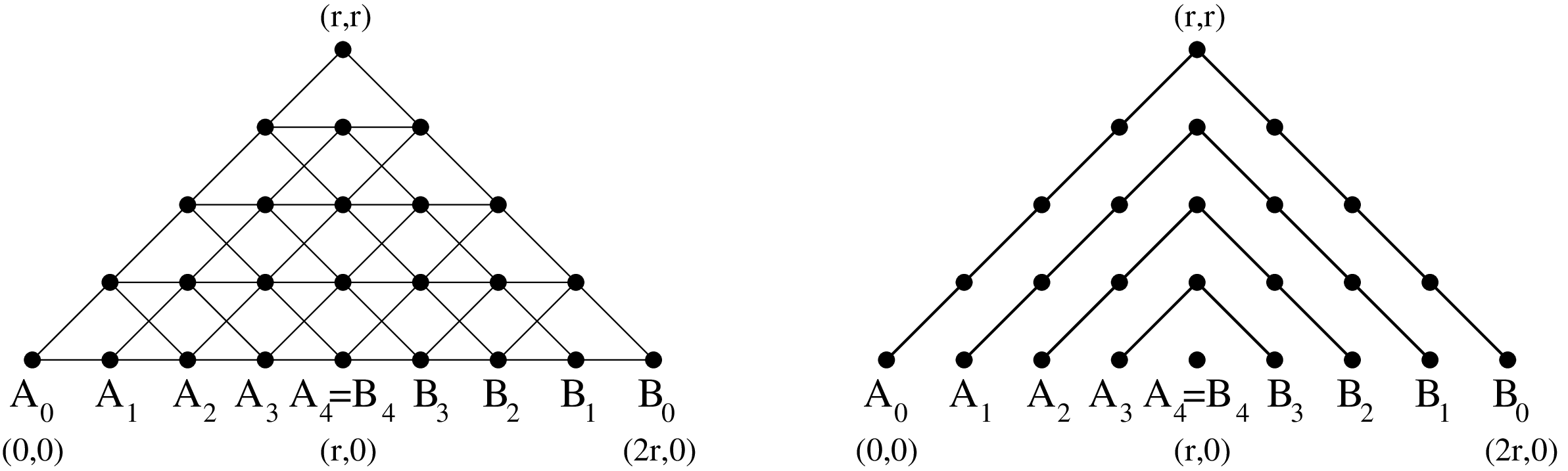}
\caption{An acyclic directed graph (we consider all edges to be 
directed either east, northeast, or southeast) 
corresponding to the denominator of \eqref{eq:moment} when $r=4$, and the unique collection
of pairwise vertex-disjoint paths from $\{A_0,\dots,A_r\}$ to 
$\{B_0,\dots,B_r\}$ in this graph.}
\label{fig:denom}
\end{figure}

Consider the acyclic directed graph shown at the left of 
Figure \ref{fig:denom} (for the case that 
$r=4$).  Although we haven't shown the orientations of the edges,
we consider all edges to be directed either east, northeast, or southeast.  
The points $A_0$, $A_1$,\dots, $A_r=B_r$, 
$B_{r-1}$, $B_{r-2}$, \dots, $B_0$ are at the lattice points 
$(0,0)$, $(1,0)$, \dots, $(r,0)$,$(r+1,0)$,$(r+2,0)$, \dots, $(2r,0)$, respectively.
We weight each edge of the graph which goes from height $i$ to height $j$ by 
the entry $c_{ij}$ of $\mathcal{C}$.
Then we define an $(r+1) \times (r+1)$ \emph{weight matrix} $M^{\denom} = (M^{\denom}_{ij}) $ as follows.
Its rows and columns are indexed by $\{0,1,\dots, r\}$, and 
$M^{\denom}_{ij}$ is defined to be the weights of all paths in the graph 
from $A_i$ to $B_j$.  In other words, 
$M^{\denom}_{ij}$ is the generating function for all 
$\mathcal{C}$-Motzkin paths of length $2r-i-j$.  
Applying Lemma \ref{lem:Motzkin1},
we have that $M^{\denom}_{ij} = \Z_{2r-i-j}$, and hence
$M^{\denom}$ is the matrix appearing in the denominator of \eqref{eq:moment}.

But now by the KMLGV Lemma, the determinant $\det M^{\den}$ is the generating function
for pairwise vertex-disjoint path collections from $\{A_0,\dots, A_r\}$ to 
$\{B_0,\dots, B_r\}$.  It is easy to see that there is only one such path 
collection, which is shown at the right of Figure \ref{fig:denom}. 
Note  that \cite[Theorem 11]{Krattenthaler} can also be used
to give a formula for this Hankel determinant.

\begin{figure}[h]
\centering
\includegraphics[height=1.4in]{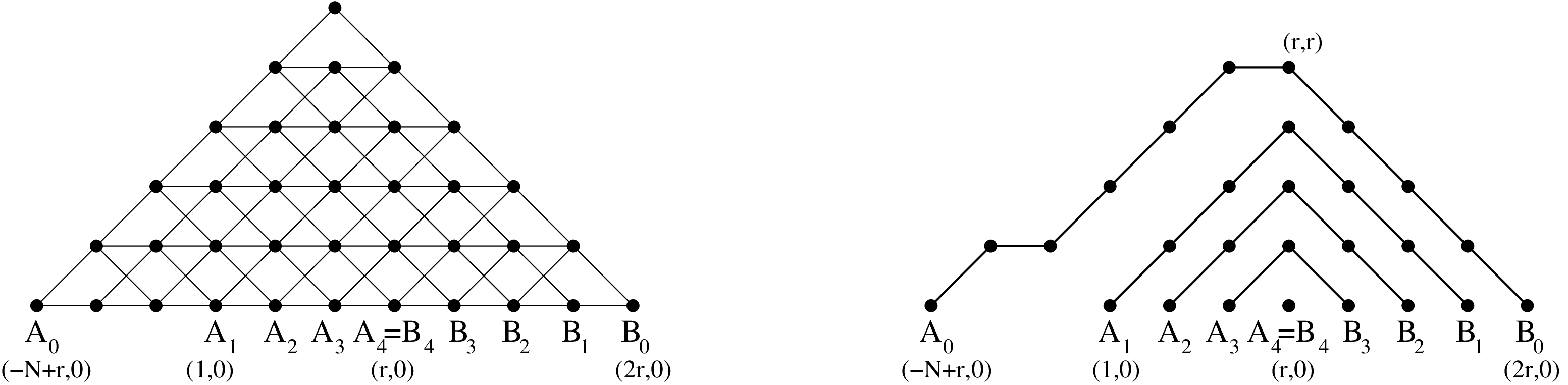}
\caption{An acyclic directed graph (we consider all edges to be 
directed either east, northeast, or southeast) 
corresponding to the numerator of \eqref{eq:moment} when $r=4$.  Now 
there are multiple collections
of pairwise vertex-disjoint paths from $\{A_0,\dots,A_r\}$ to 
$\{B_0,\dots,B_r\}$, but all of them use 
the set of paths from $A_i$ to $B_i$ for $1 \leq i \leq 4$ shown 
at the right of the figure.}
\label{fig:num}
\end{figure}

Now consider the acyclic directed graph shown at the left of 
Figure \ref{fig:num} (for the case that 
$r=4$).  Again
we consider all edges to be directed either east, northeast, or southeast.  
The points $A_0$, $A_1$,\dots, $A_r=B_r$, 
$B_{r-1}$, $B_{r-2}$, \dots, $B_0$ are at the lattice points 
$(-N+r,0)$, $(1,0)$, \dots, $(r,0)$,$(r+1,0)$,$(r+2,0)$, \dots, $(2r,0)$, respectively.
We again weight 
edges of the graph using entries of $\mathcal{C}$.
Then we define the $(r+1) \times (r+1)$ matrix $M^{\num} = (M^{\num}_{ij}) $,
with rows and columns indexed by $\{0,1,\dots, r\}$, as follows.
$M^{\num}_{ij}$ is defined to be the weights of all Motzkin paths  
from $A_i$ to $B_j$.  
Using Lemma \ref{lem:Motzkin1}, it follows that 
$M^{\num}$ is the matrix appearing in the numerator of \eqref{eq:moment}.

By the KMLGV Lemma, the determinant $\det M^{\num}$ is the generating function
for pairwise vertex-disjoint path collections from $\{A_0,\dots, A_r\}$ to 
$\{B_0,\dots, B_r\}$.  It is easy to see that such path collections must 
consist of the paths from $A_i$ to $B_i$ for $1\leq i \leq r$ shown 
at the right of Figure \ref{fig:num}, together with an arbitrary Motzkin path
from $A_0$ to $B_0$ which goes through the point $(r,r)$ and whose last $r$
steps are southeast.

We now have combinatorial interpretations for both $\det M^{\num}$ and $\det M^{\den}$.
Comparing them, we see that the ratio
$\frac{\det M^{\num}}{\det M^{\den}}$ is equal to the generating function
for the partial Motzkin paths from $(-N+r,0)$ to $(r,r)$ divided by the weight of 
the unique partial Motzkin path from $(0,0)$ to $(r,r)$.  
Clearly the generating function for the partial Motzkin paths from $(-N+r,0)$ to 
$(r,r)$ equals the generating function for the partial Motzkin paths from $(0,0)$
to $(N,r)$.  
And the weights of the up
steps in our Motzkin paths are $c_{i,i+1}$, so the weight of 
that unique partial Motzkin path is 
$c_{0,1} c_{1,2}  \dots c_{r-1,r}$.
This completes the proof.
\end{proof}

\begin{corollary}\label{thm:moment-Motzkin}
Now we choose the tridiagonal matrix $C = \xi D+E$,
where $D$ and $E$ are as in Lemma \ref{thm:USW-solution}.  
Let $C\Motz(N,r)$ be the generating function for all 
$C$- partial Motzkin paths which start at $(0,0)$ and end at $(N,r)$.
Then we have that 
\begin{equation*}
K_{(N-r,0,0,\dots,0)} = 
\frac{1}{k_r} 
{\langle W|  C^N | V^r\rangle} = 
\frac{1}{k_r} C\Motz(N,r),
\end{equation*}
where there are precisely $r$ $0$'s in  $(N-r,0,0,\dots,0)$,
	and $k_r = \frac{\prod_{i=0}^{r-1} (\xi-q^i ac)}{(1-q)^r}.$
\end{corollary}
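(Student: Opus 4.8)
The plan is to deduce this statement as a direct specialization of the general Theorem~\ref{thm:det-Motzkin} to the particular tridiagonal matrix $\mathcal{C} = C = \xi D + E$, where $D$ and $E$ are the matrices of Lemma~\ref{thm:USW-solution}. First I would observe that $C$ really is tridiagonal, since both $D = \frac{1}{1-q}(\indicator + \dd)$ and $E = \frac{1}{1-q}(\indicator + \ee)$ are tridiagonal, so Theorem~\ref{thm:det-Motzkin} applies verbatim. With this choice of $\mathcal{C}$, the quantity $\mathcal{Z}_N = \langle W| \mathcal{C}^N | V\rangle$ appearing there becomes exactly the fugacity partition function $Z_N(\xi) = \langle W|(\xi D + E)^N|V\rangle$ of Definition~\ref{def:ZN}.

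Next I would check that, once we substitute $\mathcal{Z}_N = Z_N(\xi)$, the ratio of determinants $\K_{(N-r,0,\dots,0)}$ defined in \eqref{eq:genmoment} coincides entrywise with $K_{(N-r,0,\dots,0)}(\xi)$ from Definition~\ref{def:Koornwindermoment}. This is a purely formal bookkeeping check: expanding Definition~\ref{def:Koornwindermoment} for $\lambda = (N-r,0,\dots,0)$ with $m = r+1$ parts, one sees that the entries of the numerator and denominator Hankel matrices match exactly the expanded form of $\K_{(N-r,0,\dots,0)}$ written out in the proof of Theorem~\ref{thm:det-Motzkin}. Granting this identification, Theorem~\ref{thm:det-Motzkin} immediately delivers both equalities $K_{(N-r,0,\dots,0)} = \frac{1}{k_r}\langle W| C^N | V^r\rangle = \frac{1}{k_r} C\Motz(N,r)$.

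It then remains only to evaluate the constant $k_r = \prod_{i=0}^{r-1} c_{i,i+1}$ for this specific matrix. The super-diagonal entry $c_{i,i+1}$ of $C = \xi D + E$ receives no contribution from the identity matrices, so it equals $\frac{1}{1-q}(\xi\, d_i^\sharp + e_i^\sharp)$; substituting the values $d_i^\sharp = 1$ and $e_i^\sharp = -q^i ac$ recorded in Lemma~\ref{thm:USW-solution} gives $c_{i,i+1} = \frac{1}{1-q}(\xi - q^i ac)$, whose product over $i = 0, \dots, r-1$ produces the stated expression $\prod_{i=0}^{r-1}(\xi - q^i ac)$.

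The step I expect to require the most care is precisely this final evaluation of $k_r$: one must read off the correct super-diagonal entries $d_i^\sharp$ and $e_i^\sharp$ from the (slightly modified) Uchiyama--Sasamoto--Wadati tridiagonal matrices, and keep careful track of the normalizing factor $\frac{1}{1-q}$ introduced by the definitions $D = \frac{1}{1-q}(\indicator+\dd)$ and $E = \frac{1}{1-q}(\indicator+\ee)$. Everything else in the argument is a formal invocation of the already-established Theorem~\ref{thm:det-Motzkin}, so no new combinatorial input beyond the KMLGV argument carried out there is needed.
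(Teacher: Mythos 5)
Your overall strategy is exactly the one the paper intends: the corollary is an immediate specialization of Theorem~\ref{thm:det-Motzkin} to $\mathcal{C} = C = \xi D + E$, using the identification $\mathcal{Z}_N = Z_N(\xi)$ (so that the ratio $\K_{(N-r,0,\dots,0)}$ of \eqref{eq:genmoment} coincides entrywise with $K_{(N-r,0,\dots,0)}(\xi)$ of Definition~\ref{def:Koornwindermoment}), followed by an evaluation of $k_r = \prod_{i=0}^{r-1} c_{i,i+1}$ for this particular matrix. Your first two steps are correct and are the same bookkeeping the paper relies on.

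The genuine gap is in the final step, the very one you flagged as delicate. You correctly compute $c_{i,i+1} = \frac{1}{1-q}(\xi - q^i ac)$, but the product of these entries over $i = 0,\dots,r-1$ is
\[
\prod_{i=0}^{r-1} \frac{\xi - q^i ac}{1-q} \;=\; \frac{1}{(1-q)^r}\prod_{i=0}^{r-1}(\xi - q^i ac),
\]
not $\prod_{i=0}^{r-1}(\xi - q^i ac)$: the factor $(1-q)^{-r}$ does not go away, and your claim that the product ``produces the stated expression'' is false as arithmetic. Consequently, what your argument actually proves is
\[
K_{(N-r,0,\dots,0)}(\xi) \;=\; \frac{(1-q)^r}{\prod_{i=0}^{r-1}(\xi - q^i ac)}\,\langle W| C^N |V^r\rangle ,
\]
which differs by $(1-q)^r$ from the displayed conclusion with $k_r = \prod_{i=0}^{r-1}(\xi - q^i ac)$. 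This discrepancy is not cosmetic and cannot be absorbed elsewhere: take $N=r$, so that $\lambda = (0,\dots,0)$ and $K_{(0,\dots,0)}(\xi)=1$ trivially (identical numerator and denominator in Definition~\ref{def:Koornwindermoment}), while $\langle W|C^r|V^r\rangle$ is the weight of the unique staircase of $r$ up steps, namely $\prod_{i=0}^{r-1}\frac{\xi - q^i ac}{1-q}$. Hence the normalizing constant in this identity is forced to be $\prod_{i=0}^{r-1}c_{i,i+1}$ \emph{with} the $(1-q)$ denominators, exactly as Theorem~\ref{thm:det-Motzkin} prescribes. In other words, either $k_r$ must be taken as $\prod_{i=0}^{r-1}\frac{\xi - q^i ac}{1-q}$, or the factor $(1-q)^r$ must be carried explicitly (this is the same factor that Theorem~\ref{thm:main} tracks); your write-up silently equates two quantities that differ by it, which invalidates the derivation as written.
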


To prove Theorem \ref{thm:main}, we now need to relate
$\langle W| (\xi D+E)^N | V^r\rangle$ to the fugacity partition function for the 2-species ASEP.

\section{A Jacobi-Trudi formula for general Koornwinder moments}\label{sec:JT}


In this section we will prove a general 
Jacobi-Trudi type result in Theorem \ref{thm:JT0}, 
which expresses the quantity $\K_{\lambda}$ (see \eqref{eq:genmoment})
in terms of 
the quantities $\K_{(m,0,\dots,0)}$.
Our proof techniques build on those used 
in the proof of Theorem \ref{thm:moment-Motzkin}.  
We again let $\mathcal{C}=(c_{ij})$ be
a tridiagonal matrix.  Edges of graphs
that we will construct here 
are all directed northeast, east, or southeast, and each edge from 
height $i$ to $j$ is weighted by $c_{ij}$.  

\begin{theorem}\label{thm:JT0}
Let $\lambda = (\lambda_1,\dots, \lambda_n)$ be a partition.  Then 
\begin{equation}\label{eq:JT}
\K_{\lambda} = \det(\K_{(\lambda_i+j-i,0,0,\dots,0)})_{i,j=1}^n,
\end{equation}
where the term on the right-hand-side has precisely $n-j$ $0$'s.
\end{theorem}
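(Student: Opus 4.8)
The plan is to prove this Jacobi–Trudi identity by the same KMLGV (Lindström–Gessel–Viennot) machinery used in Theorem~\ref{thm:det-Motzkin}, but now with a more elaborate configuration of sources and sinks that realizes the full partition $\lambda = (\lambda_1,\dots,\lambda_n)$ on one side and the single-row moments $\K_{(\lambda_i+j-i,0,\dots,0)}$ on the other. The key conceptual point is that each factor $\K_{(\lambda_i+j-i,0,\dots,0)}$ already has, by Corollary~\ref{thm:moment-Motzkin} (and its underlying Theorem~\ref{thm:det-Motzkin}), a clean interpretation as a weighted count of $\mathcal{C}$-partial Motzkin paths (normalized by $k_\bullet$). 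So both sides of \eqref{eq:JT} should be expressible as generating functions for families of nonintersecting lattice paths, and the identity will follow from matching these two path models.

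First I would recall that $\K_\lambda$ is the ratio of Hankel-type determinants in \eqref{eq:genmoment}, where the numerator matrix has $(i,j)$-entry $\Z_{\lambda_i + (n-i) + (n-j)}$ and the denominator is the same with $\lambda = 0$. The denominator, exactly as in Figure~\ref{fig:denom}, evaluates by KMLGV to the weight of the unique nonintersecting path family, giving a product of the form $\prod_i k_{\text{(something)}}$; this is the normalizing factor that also appears on the right-hand side. Next I would set up an acyclic weighted digraph with sources $A_1,\dots,A_n$ placed so that $A_i$ sits at horizontal coordinate $-\lambda_i - (n-i)$ (height $0$) and sinks $B_1,\dots,B_n$ placed at the standard staircase positions, with every edge from height $h$ to height $h'$ weighted by $c_{h h'}$ and directed east/northeast/southeast. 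Applying the KMLGV Lemma, $\det(\K_{(\lambda_i+j-i,0,\dots,0)})_{i,j}$ (after clearing the shared denominators) becomes the signed sum over permutations $\sigma$ of products of single-path generating functions $A_i \to B_{\sigma(i)}$, which is precisely the generating function for \emph{nonintersecting} $n$-tuples of partial Motzkin paths from $\{A_i\}$ to $\{B_i\}$.

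The crux is then to identify this nonintersecting-path generating function with the numerator determinant of $\K_\lambda$ in \eqref{eq:genmoment}. Here I would argue that the vertex-disjointness constraint forces each path to terminate along the staircase in a prescribed way (the same ``last $r$ steps are southeast'' phenomenon seen in the proof of Theorem~\ref{thm:det-Motzkin}), so that the $i$th path contributes a factor $\Z_{\lambda_i + (n-i) + (n-j)}$ in row $i$, column $j$, exactly reproducing the numerator Hankel matrix. Matching the normalization $k_r$-type factors on both sides completes the identity.

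\textbf{The main obstacle} I anticipate is bookkeeping the geometry of the sources and sinks so that the KMLGV expansion of $\det(\K_{(\lambda_i+j-i,0,\dots,0)})$ lands \emph{exactly} on the numerator matrix $(\Z_{\lambda_i+2n-i-j})$ rather than a permuted or shifted version — in particular, verifying that only the identity permutation (or a single canonical family) survives the nonintersecting condition at each sink, and that the shared denominators $k_\bullet$ factor out uniformly from every entry of the determinant. A cleaner alternative, which I would try in parallel, is a purely algebraic route: since by Corollary~\ref{thm:moment-Motzkin} each $\K_{(m,0,\dots,0)}$ is a normalized ``complete homogeneous'' moment $\langle W| C^{\,\bullet} |V^{\bullet}\rangle / k_\bullet$, one can regard $\K_\lambda$ as a ratio of Hankel determinants and invoke a standard determinant identity (e.g. the Lindström–Gessel–Viennot or Jacobi–Trudi manipulation at the level of the $\Z_N$'s directly) to pass from the $h$-basis single-row quantities to the general $\K_\lambda$; this reduces the claim to a formal identity among minors of the infinite Hankel matrix $(\Z_{i+j})$, sidestepping the delicate lattice-path picture.
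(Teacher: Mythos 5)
Your proposal follows essentially the same route as the paper's proof: both sides of \eqref{eq:JT} are interpreted, via Corollary \ref{thm:moment-Motzkin} and the KMLGV lemma, as generating functions for nonintersecting families of partial Motzkin paths ending on a staircase of sinks (the paper's Figure \ref{fig:JT2}), with the column factors $k_1\cdots k_{n-1}$ accounting for the normalization. The forced-southeast-edge argument you anticipate for matching the staircase families to the numerator Hankel matrix $(\Z_{\lambda_i+2n-i-j})$ is exactly how the paper handles that step.
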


Note that Theorem \ref{thm:JT0} immediately implies 
Corollary \ref{thm:JT}, which expresses a general Koornwinder moment
$K_{\lambda}(\xi)$ in terms of the ``complete homogeneous" Koornwinder
moments $K_{(m,0,\dots,0)}(\xi)$.

\begin{corollary}\label{thm:JT}
Let $\lambda = (\lambda_1,\dots, \lambda_n)$ be a partition.  Then 
\begin{equation}
K_{\lambda}(\xi) = \det(K_{(\lambda_i+j-i,0,0,\dots,0)}(\xi))_{i,j=1}^n,
\end{equation}
where the term on the right-hand-side has precisely $n-j$ $0$'s.
\end{corollary}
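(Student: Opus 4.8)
The plan is to deduce Corollary~\ref{thm:JT} from Theorem~\ref{thm:JT0} by a single specialization, so that essentially no new work is required beyond verifying that one determinantal formula is a special case of another. The key observation is that the quantities $K_\lambda(\xi)$ of Definition~\ref{def:Koornwindermoment} are precisely the quantities $\K_\lambda$ of \eqref{eq:genmoment} for one particular choice of the tridiagonal matrix $\mathcal{C}$. Accordingly, I would first take $\mathcal{C} = \xi D + E$, with $D$ and $E$ as in Lemma~\ref{thm:USW-solution}, exactly as in Corollary~\ref{thm:moment-Motzkin}; with this choice the generating functions become $\Z_N = \langle W| \mathcal{C}^N |V\rangle = Z_N(\xi)$.

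Next I would check that under this substitution the ratio of Hankel determinants in \eqref{eq:genmoment} agrees term by term with the ratio in \eqref{def:moment}. Reading off \eqref{eq:genmoment}, the $(i,j)$ entry of the numerator matrix is $\Z_{\lambda_i + 2n - i - j}$ and the $(i,j)$ entry of the denominator matrix is $\Z_{2n - i - j}$; setting $n = m$ and substituting $\Z_N = Z_N(\xi)$ turns these into $Z_{\lambda_i + m - i + m - j}(\xi)$ and $Z_{2m - i - j}(\xi)$, which are exactly the entries appearing in Definition~\ref{def:Koornwindermoment}. Hence $\K_\lambda = K_\lambda(\xi)$ for this $\mathcal{C}$, and likewise $\K_{(\lambda_i + j - i, 0, \dots, 0)} = K_{(\lambda_i + j - i, 0, \dots, 0)}(\xi)$.

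With this identification in hand, I would simply invoke Theorem~\ref{thm:JT0} for the matrix $\mathcal{C} = \xi D + E$, which gives $\K_\lambda = \det(\K_{(\lambda_i + j - i, 0, \dots, 0)})_{i,j=1}^n$ with precisely $n - j$ zeros in the $(i,j)$ term, and then translate each $\K$ back to the corresponding $K(\xi)$ via the identification above to obtain the stated formula. The only point demanding any care --- and hence the sole potential obstacle --- is the index bookkeeping: one must confirm that the ``$n-j$ zeros'' convention of Theorem~\ref{thm:JT0} is compatible with the ``$r$ zeros in $(N-r,0,\dots,0)$'' convention used elsewhere, and that the passage $\Z_N \mapsto Z_N(\xi)$ leaves the normalizing constants $k_r$ from Corollary~\ref{thm:moment-Motzkin} to cancel correctly in every ratio. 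Since these are purely formal checks with no analytic content, I expect the argument to be genuinely immediate, as the remark preceding the corollary already asserts.
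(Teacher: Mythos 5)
Your proposal is correct and is essentially identical to the paper's own argument: the paper derives Corollary~\ref{thm:JT} precisely by noting that Theorem~\ref{thm:JT0}, applied with $\mathcal{C}=\xi D+E$ so that $\Z_N=Z_N(\xi)$ and hence $\K_\mu=K_\mu(\xi)$ for every partition $\mu$, immediately specializes to the stated identity. Your index verification (the $(i,j)$ entry $\Z_{\lambda_i+2n-i-j}$ matching $Z_{\lambda_i+m-i+m-j}(\xi)$ with $n=m$) is exactly the formal check needed, and the worry about the constants $k_r$ is moot since they do not appear in either determinantal identity.
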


We now prove Theorem \ref{thm:JT0}.
\begin{proof}
We start by analyzing the left-hand side of \eqref{eq:JT}.
By definition, we have that 
\begin{equation}\label{JT-matrix}
\K_{(\lambda_1,\dots,\lambda_n)} = 
\frac{\det \begin{vmatrix}
\Z_{\lambda_1+2n-2} & \Z_{\lambda_1+2n-3} & \dots & \Z_{\lambda_1+n-1}\\
\Z_{\lambda_2+2n-3} & \Z_{\lambda_2+2n-4} & \dots & \Z_{\lambda_2+n-2}\\
\vdots & \vdots & & \vdots \\
\Z_{\lambda_n+n-1} & \Z_{\lambda_n+n-2} & \dots & \Z_{\lambda_n}
\end{vmatrix}}
{\det \begin{vmatrix}
\Z_{2n-2} & \Z_{2n-3} & \dots & \Z_{n-1}\\
\Z_{2n-3} & \Z_{2n-4} & \dots & \Z_{n-2}\\
\vdots & \vdots & & \vdots \\
\Z_{n-1} & \Z_{n-2} & \dots & \Z_0
\end{vmatrix}}.
\end{equation}

Using the same arguments as in the proof of Theorem \ref{thm:moment-Motzkin}, we interpret
the matrix in the numerator  of \eqref{JT-matrix} as the weight matrix associated to the acyclic directed graph
at the left of Figure \ref{fig:gen-num}, where edges from height $i$ to height $j$
are weighted by $c_{i j}$.
\begin{figure}[h]
\centering
\includegraphics[height=1.8in]{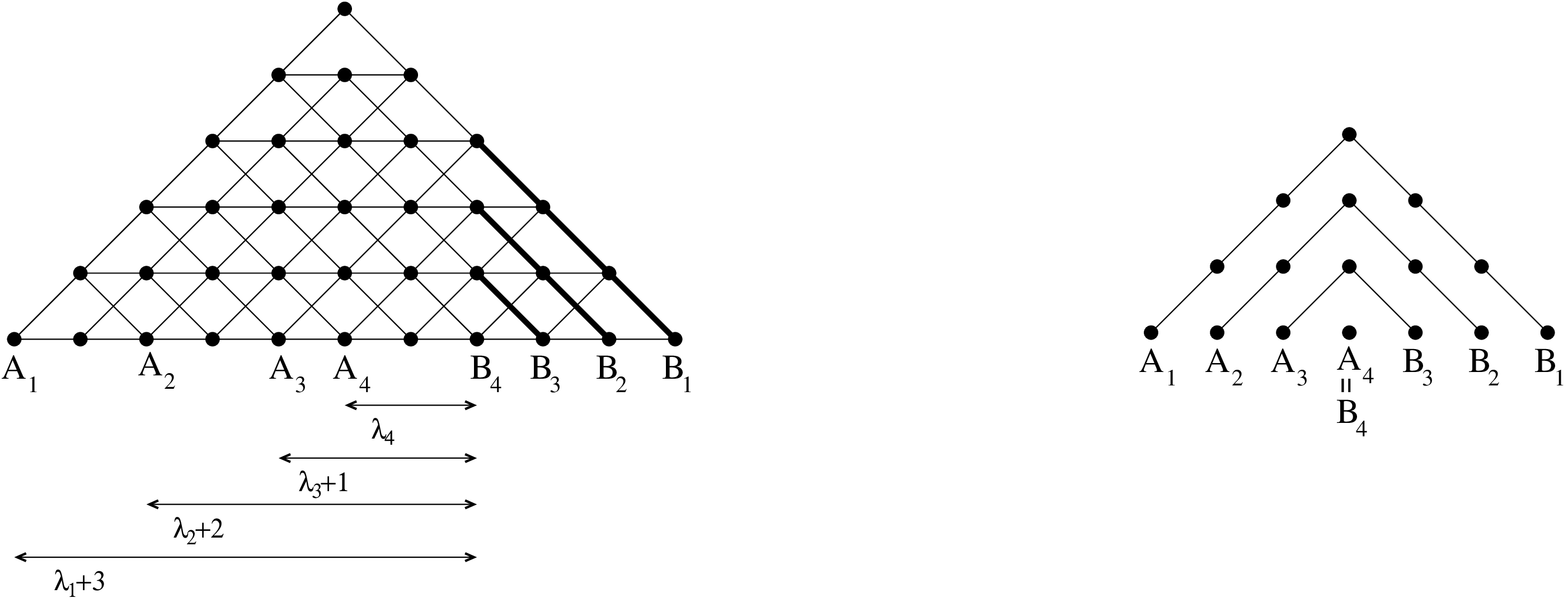}
\caption{An acyclic directed graph (all edges are
directed either east, northeast, or southeast), whose weight matrix is 
given by \eqref{JT-matrix}, when $n=4$.}
\label{fig:gen-num}
\end{figure}
Note that the points $A_1$, \dots, $A_n$, and $B_n$, \dots, $B_1$ all lie
on the $x$-axis, with the $B_i$'s at unit distance apart.  The positions
of the $A_i$'s have been chosen so that the distance between $A_i$ and $B_n$
is $\lambda_i+n-i$.
By Lemma \ref{lem:Motzkin1}, 
the $ij$ entry $\Z_{\lambda_i+2n-i-j}$ of the matrix in the numerator of \eqref{JT-matrix} 
is equal to 
the generating function for Motzkin paths from $A_i$ to $B_j$, hence
this matrix is the weight matrix associated to the graph at the left of Figure \ref{fig:gen-num}.
By the KMLGV Lemma, its determinant 
is the generating function
for pairwise vertex-disjoint path collections from $\{A_1,\dots, A_r\}$ to 
$\{B_1,\dots, B_r\}$.  Note that all such path collections must use the southeast edges
which are shown in bold at the left of Figure \ref{fig:gen-num}.
Moreover, as we showed in the proof of Theorem \ref{thm:moment-Motzkin}, the 
determinant in the denominator of \eqref{JT-matrix} is equal to the weight of the 
path collection shown at the right of Figure \ref{fig:gen-num}.
Therefore the ratio of the determinants is equal to the generating function for 
pairwise-disjoint non-intersecting paths from 
$A_1,\dots, A_n$ to the lattice points $B_1,\dots,B_4$ in Figure 
\ref{fig:JT2}, divided by 
the product of the weights of all the up steps in the path collection shown at the 
right of Figure \ref{fig:gen-num}.  That product is equal to 
$k_1 k_2 \dots k_{n-1}$, where $k_r = \prod_{i=0}^{r-1} c_{i,i+1}.$

\begin{figure}[h]
\centering
\includegraphics[height=1.8in]{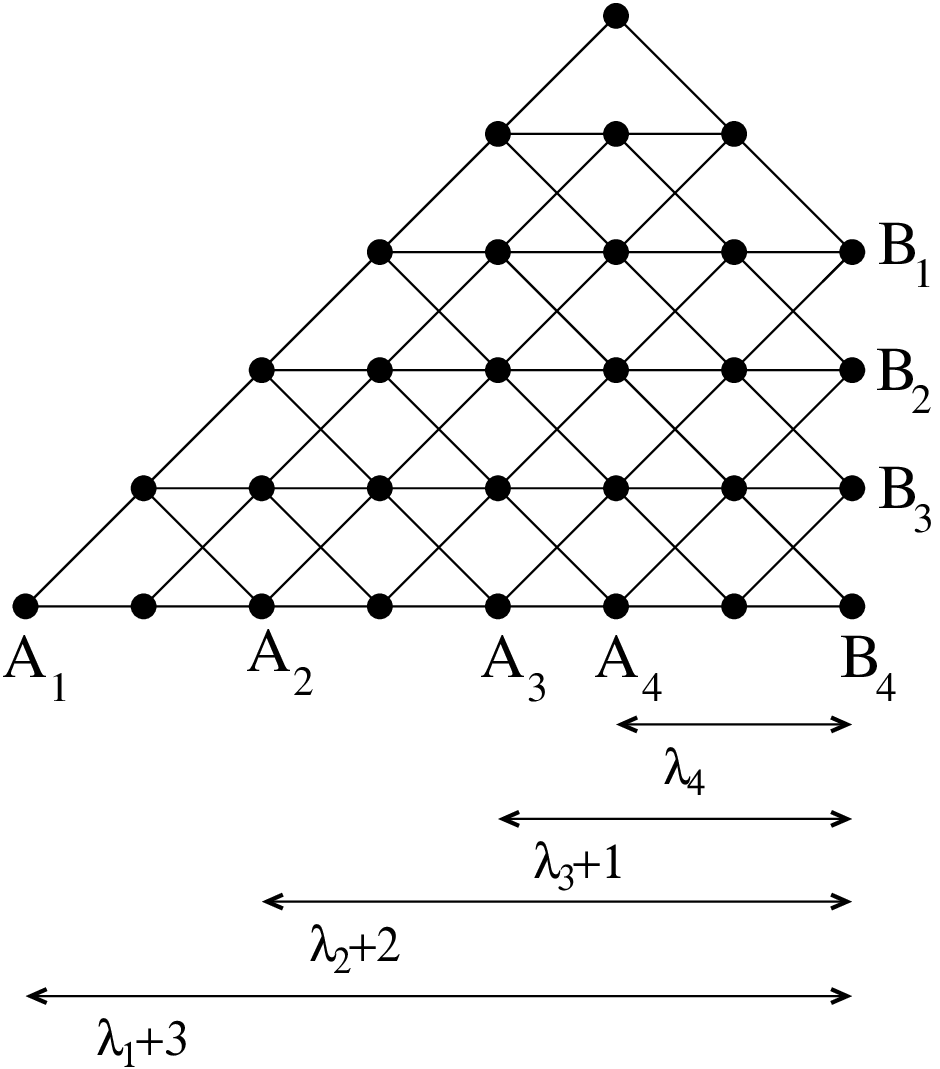}
\caption{An acyclic directed graph, shown for $n=4$.  Edges are oriented
northeast, east, or southeast.}
\label{fig:JT2}
\end{figure}

We now consider the right-hand side of \eqref{eq:JT}.
We have that 
\begin{equation}\label{eq:det}
\det(\K_{(\lambda_i+j-i,0,0,\dots,0)})_{i,j=1}^n = 
\det \begin{vmatrix}
\K_{(\lambda_1,0,\dots,0)} & \K_{(\lambda_1+1,0,\dots,0)} & \dots & \K_{(\lambda_1+n-1)}\\
\K_{(\lambda_2-1,0,\dots,0)} & \K_{(\lambda_2,0,\dots,0)} & \dots & \K_{(\lambda_2+n-2)}\\
\vdots & \vdots & & \vdots \\
\K_{(\lambda_n-n+1,0,\dots,0)} & \K_{(\lambda_n-n+2,0,\dots,0)} & \dots & \K_{(\lambda_n)}
\end{vmatrix},
\end{equation}
where the partitions in column $j$ of the matrix contain precisely $n-j$ $0$'s.  
Using Corollary \ref{thm:moment-Motzkin}, we have that \eqref{eq:det} is equal to 
the determinant of 
\begin{equation*}
\begin{vmatrix}
\frac{1}{k_{n-1}}  \C Motz(\lambda_1+n-1,n-1) & 
\frac{1}{k_{n-2}}  \C Motz(\lambda_1+n-1,n-2) & \dots &
                   \C Motz(\lambda_1+n-1,0)\\
\frac{1}{k_{n-1}}  \C Motz(\lambda_2+n-2,n-1) & 
\frac{1}{k_{n-2}}  \C Motz(\lambda_2+n-2,n-2) & \dots &
                   \C Motz(\lambda_2+n-2,0)\\
\vdots & \vdots & & \vdots \\
\frac{1}{k_{n-1}}  \C Motz(\lambda_n,n-1) & 
\frac{1}{k_{n-2}}  \C Motz(\lambda_n,n-2) & \dots &
                   \C Motz(\lambda_n,0)\\
\end{vmatrix}, 
\end{equation*}
which is equal to 
\begin{equation}\label{eq:wtmatrix}
\det \begin{vmatrix}
 \C Motz(\lambda_1+n-1,n-1) & 
  \C Motz(\lambda_1+n-1,n-2) & \dots &
                  \C Motz(\lambda_1+n-1,0)\\
 \C Motz(\lambda_2+n-2,n-1) & 
  \C Motz(\lambda_2+n-2,n-2) & \dots &
                  \C Motz(\lambda_2+n-2,0)\\
\vdots & \vdots & & \vdots \\
 \C Motz(\lambda_n,n-1) & 
 \C Motz(\lambda_n,n-2) & \dots &
                   \C Motz(\lambda_n,0)\\
\end{vmatrix}
\prod_{\ell=1}^{n-1} \frac{1}{k_{\ell}}. 
\end{equation}
But now it's clear that the matrix in \eqref{eq:wtmatrix} is the weight matrix
for the directed graph shown in Figure 
\ref{fig:JT2}, so we are done.
\end{proof}

\section{From partial Motzkin paths to the  two-species ASEP}\label{sec:M-2}

Our goal in this section is to prove Theorem \ref{thm:main2}, which expresses
the fugacity partition function $Z_{N,r}(\xi)$
in terms of partial Motzkin paths.  We also use this
theorem to give an integral representation and an 
explicit closed formula for $Z_{N,r}(\xi)$ (the latter was communicated to us
by Dennis Stanton).

\begin{theorem}\label{thm:main2}
Define $D$, $E$, $\langle W|$, and $|V\rangle$ as in Lemma \ref{thm:USW-solution},
and set $A = DE-ED$.  
Using the change of variables from \eqref{subs1} and \eqref{subs2}, 
define 
\begin{equation}\label{eq:rho}
\rho_r = \frac{(1-q)^r}{\prod_{i=0}^{r-1} (\xi-q^i ac)} = 
\frac{1}{k_r} = \frac{\alpha^r (1-q)^r}{\prod_{i=0}^{r-1} (\alpha \xi+q^i \gamma)}.
\end{equation}
  Then we have that 
\begin{equation}\label{eq:partition}
\langle W| (\xi D+E)^N | V^r\rangle \cdot \rho_r = 
[y^r] \frac{\langle W| (\xi D+E+yA)^N |V \rangle}{\langle W| A^r|V \rangle}
= Z_{N,r}(\xi).
\end{equation}
Note that 
$\langle W| (\xi D+E)^N | V^r\rangle$ is the generating function
for $\C$-partial Motzkin paths, with $\C = \xi D+E$, so this
gives a combinatorial interpretation for the fugacity partition function
$Z_{N,r}(\xi)$ of  the 
two-species ASEP.
\end{theorem}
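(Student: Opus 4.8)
The plan is to prove the equivalent scalar identity obtained by clearing the denominator in \eqref{eq:partition}, namely
\[
[y^r]\,\langle W|(C+yA)^N|V\rangle \;=\; \rho_r\,\langle W|A^r|V\rangle\,\langle W|C^N|V^r\rangle,\qquad C:=\xi D+E,
\]
since by Corollary \ref{thm:moment-Motzkin} the factor $\langle W|C^N|V^r\rangle=C\Motz(N,r)$ is exactly the partial-Motzkin-path quantity I want to land on. First I would expand the left-hand side as a sum over the $\binom{N}{r}$ placements of the $r$ copies of $A$ among the $N-r$ copies of $C$, so the problem becomes: show that summing over all insertions of $r$ ``light-particle'' letters collapses to a single path forced to end at height $r$, up to the explicit constant $\rho_r\langle W|A^r|V\rangle$.

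The engine I would use is the pair of relations from Corollary \ref{cor:ansatz}, rewritten in the clean intertwining form $DA=A(qD+1)$ and $AE=(qE+1)A$, together with the observation that $[D,C]=DC-CD=[D,E]=A$ (the $\xi D$ part drops out). These let me build an intertwiner: I look for operators $H_r$, each a polynomial in $D$ alone, with $H_0=I$ and $[H_r,C]=A\,H_{r-1}$. Solving the recursion using $DA=qAD+A$ gives $H_1=D$, then $H_2=\frac{1}{1+q}D(D-1)$, and in general a Gaussian-binomial form $H_r=\frac{1}{[r]_q!}\prod_{i=0}^{r-1}(D-[i]_q)$. Setting $G_y=\sum_{r\ge0}y^r H_r$ then yields $(C+yA)G_y=G_yC$, hence $(C+yA)^N=G_y\,C^N\,G_y^{-1}$ as an identity of banded matrices, coefficient-by-coefficient in $y$. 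This is the precise sense in which inserting $A$'s is conjugate to the pure-$C$ dynamics.

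With the intertwiner in hand I would compute $[y^r]\,\langle W|G_y\,C^N\,G_y^{-1}|V\rangle$. Because every $H_r$ is a polynomial in $D$, the dressing touches the boundaries only through finitely many $\langle W|D^k$ and $D^k|V\rangle$, which I can evaluate explicitly in the USW representation; the left relation $\langle W|(\alpha E-\gamma D)=\langle W|$ and the right relation $(\beta D-\delta E)|V\rangle=|V\rangle$ are then used to reduce the resulting combination of entries $\langle W|(\cdots)C^N(\cdots)|V\rangle$ to the single term $\langle W|C^N|V^r\rangle$. The proportionality constant I expect to pin down by matching: the product of up-step weights $\prod_{i=0}^{r-1}C_{i,i+1}=\prod_{i=0}^{r-1}\frac{\xi-q^iac}{1-q}=k_r/(1-q)^r=1/\rho_r$ accounts for the factor $\rho_r$ (it is the weight of the unique staircase from height $0$ to height $r$), and a direct evaluation of $\langle W|A^r|V\rangle=(A^r)_{00}$ supplies the rest. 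As a consistency check I would first verify the identity by hand for $r=0,1$ and small $N$, using the partial-path recursion $Q_{N,r}=C_{r-1,r}Q_{N-1,r-1}+C_{r,r}Q_{N-1,r}+C_{r+1,r}Q_{N-1,r+1}$ for $Q_{N,r}:=\langle W|C^N|V^r\rangle$.

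The main obstacle is precisely this last reduction, because $A=DE-ED$ is \emph{not} a clean height-raising operator: in the USW representation it is pentadiagonal, with $A_{i,i+2}=\frac{q^iac}{1-q}\ne0$ and $A_{i,i}\ne0$, so already $A|V\rangle$ is a mixture of $|V^0\rangle,|V^1\rangle,|V^2\rangle$ and no termwise identification of ``$r$ inserted $A$'s'' with ``end at height $r$'' is available. The identity is genuinely global, forced only by the commutation relations, and the crux is to show that the correction terms produced by $G_y$ and $G_y^{-1}$ (equivalently, by commuting the $A$'s past the $C$'s) telescope once the boundary relations are applied, leaving exactly $\rho_r\langle W|A^r|V\rangle\langle W|C^N|V^r\rangle$. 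Should the dressing computation prove unwieldy, the fallback is a double induction on $N$ and $r$, matching the partial-path recursion above against the recursion $|\psi_{N,r}\rangle=C|\psi_{N-1,r}\rangle+A|\psi_{N-1,r-1}\rangle$ for the vectors $|\psi_{N,r}\rangle:=[y^r](C+yA)^N|V\rangle$ on the two-species side.
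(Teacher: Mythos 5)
Your intertwiner is correct, and it is the one genuinely new idea here relative to the paper. Indeed, with $C=\xi D+E$ one has $[D,C]=[D,E]=A$ and $DA=A(qD+1)$, and your operators $H_r=\frac{1}{[r]_q!}\prod_{i=0}^{r-1}(D-[i]_q)$ do satisfy $[H_{r+1},C]=AH_r$ (the induction boils down to the identity $[r]_q^2-q^{r-1}=[r+1]_q[r-1]_q$), whence $(C+yA)G_y=G_yC$ and $(C+yA)^N=G_yC^NG_y^{-1}$ coefficient-by-coefficient in $y$. (The computation is even cleaner in terms of $\dd=(1-q)D-\indicator$, since $\dd A=qA\dd$ exactly; one may then take $H_r$ proportional to $\dd^r$, so that $G_y$ is a $q$-exponential in $\dd$.) This conjugation argument appears nowhere in the paper, whose route is entirely different: it refines \eqref{eq:partition} to a separate identity for each word $X\in\{D,E\}^N$ (Theorem \ref{thm:refinement}), reduces to the single word $X=D^N$ via the rewriting rule $DE=qED+D+E$ and the boundary relation for $\langle W|E$ (Lemmas \ref{lem:1} and \ref{lem:2}), and then proves the $D^N$ case by deriving matching recurrences and explicit $q$-binomial formulas for both sides (Section \ref{sec:thm}).

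However, there is a genuine gap, and you have located it yourself: after conjugating, everything hinges on the identity
\begin{equation*}
\sum_{s+u=r}\langle W|H_s\,C^N\,K_u|V\rangle\;=\;\rho_r\,\langle W|A^r|V\rangle\,\langle W|C^N|V^r\rangle,\qquad K_u:=[y^u]\,G_y^{-1},
\end{equation*}
and the proposal contains no proof of it. Since $H_s$ and $K_u$ are polynomials in $D$ of degrees $s$ and $u$ with $s+u=r$, the left side is a linear combination of entries $(C^N)_{ij}$ with $i+j\le r$, while the right side is a multiple of the single entry $(C^N)_{0r}$; such an equality can hold only because of non-obvious relations among entries of powers of this particular Jacobi matrix combined with the specific USW boundary data, and the sentence ``the boundary relations are then used to reduce \dots to the single term $\langle W|C^N|V^r\rangle$'' states the goal rather than supplying an argument. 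This reduction \emph{is} the theorem: in the paper it consumes Theorem \ref{thm:refinement}, Lemmas \ref{lem:1}--\ref{lem:2}, and all of Section \ref{sec:thm}, where the key identity (Theorem \ref{thm:finalcheck2}) is established by explicit formulas and $q$-binomial identities, not by formal manipulation. Your fallback double induction does not close the gap either, for the reason you yourself observe: because $A$ is pentadiagonal rather than a raising operator, the recursion $|\psi_{N,r}\rangle=C|\psi_{N-1,r}\rangle+A|\psi_{N-1,r-1}\rangle$ does not match the partial-Motzkin recursion term by term, so the unrefined statement \eqref{eq:partition} is too weak to serve as an induction hypothesis---this is precisely why the paper proves the stronger word-by-word refinement instead of \eqref{eq:partition} directly. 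As it stands, you have a correct and interesting reformulation plus an accurate list of obstacles, but not a proof.
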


Note that our main theorem, Theorem \ref{thm:main},
is an immediate consequence of Corollary \ref{thm:moment-Motzkin}
and Theorem \ref{thm:main2}.

Before proving Theorem \ref{thm:main2}, we 
observe that it directly implies an integral representation for 
the fugacity partition function.

\begin{corollary}\label{cor:integral}
Using the change of variables from \eqref{subs1} and \eqref{subs2}, 
we have the following integral representation
for the fugacity partition function:
$$
Z_{N,r}(\xi)=B_{N,r} \int (\sqrt{\xi}+1/\sqrt{\xi}+x)^N w(x,a',b',c',d';q)P_r(x,a',b',c',d';q)\frac{dx}{4i\pi x}
$$
where $B_{N,r}$ is a simple constant, 
$a' =  a \sqrt{\xi}, \ b' = b/\sqrt{\xi}, \ c' =  c\sqrt{\xi} ,  
\ d' =  d/\sqrt{\xi}$, 
$w(x,b,c,d;q)$ is the Askey-Wilson density
and $P_r(x,a,b,c,d;q)$ is the $r^{th}$ Askey-Wilson polynomial.
See Section \ref{sec:Koornwinder} for the definitions of the Askey-Wilson
density and polynomials.
\end{corollary}

\begin{proof}
Let $\C$ be a tridiagonal matrix, whose rows encode 
the recurrence relations defining a family
of orthogonal polynomials $\{P_m(x)\}$.  Let $f$ be the 
linear functional expressing the orthogonality
relation, i.e. $f(P_m(x) P_{\ell}(x)) = 0$ unless $m=\ell$.
By \cite[Proposition 17, Chapter 1]{Viennot2}, 
the quantity $f(x^N P_r(x))$ is proportional to 
 the  sum of the weights of all 
$\C$-partial Motzkin paths of length $N$ which 
start at height $0$ and end at 
height $r$.

Now let $\C = \xi D+E$ be the matrix from Theorem \ref{thm:main2};
we have that $\C = 
\frac{1}{1-q} \sqrt{\xi}\left( \sqrt{\xi} \dd + \frac{1}{\sqrt{\xi}} \ee +
(\sqrt{\xi} + \frac{1}{\sqrt{\xi}}) \indicator \right).$
Using the arguments of \cite[Proposition 2.7]{CSSW}
together with \cite[Proposition 17, Chapter 1]{Viennot2} 
and Theorem \ref{thm:main2} yields the result.
\end{proof}

From Corollary \ref{cor:integral},
one can compute asymptotics, including 
the particle current and the particle densities.  These quantities were originally
computed by Uchiyama \cite{Uchiyama}, and were later recomputed by 
Cantini
\cite{Cantini} 
when $r=N/k$ and $N\rightarrow\infty$.
As expected, the phase diagram has three phases: low density, high density and maximal current.

From Corollary \ref{cor:integral},
it is also possible to generalize \cite[Theorem 1.13]{CSSW} and get a closed 
formula for $Z_{N,r}(\xi)$. This was communicated to us by Dennis Stanton
\cite{Dennis}.  To state this formula, 
let $(a;q)_n=\prod_{i=0}^{n-1}(1-aq^i)$, $(a_1,\ldots ,a_k;q)_n=\prod_{i=1}^k (a_i;q)_n$
and
$$
\left[\begin{array}{l} k\\ r\end{array}\right]=\frac{(q;q)_n}{(q;q)_k(q;q)_{n-k}}.
$$
Then let
$$
F_{k,r}=(-a/\sqrt{\xi})^rq^{r\choose 2}\left[\begin{array}{l} k\\ r\end{array}\right](ab,ac/\xi,ad,bc,bd\xi,cd,q,abcdq^{r-1};q)_r
\frac{(abq^r,acq^r/\xi,adq^r;q)_{k-r}}{(abcdq;q)_{k+r}}.
$$
and
$$
G_{N,r}(\xi)=\sum_{k=r}^{N}\sum_{j=0}^k \frac{F_{k,r} q^{k-j^2}a^{-2j}\left(\xi+1+aq^j+\frac{\xi}{aq^j}\right)^N}
{(q,\xi q^{1-2j}/a^2;q)_j(q;q^{2j+1}a^2/\xi,q;q)_{k-j}}.
$$

\begin{theorem}\cite{Dennis}
The fugacity partition function is equal to
\begin{equation}
Z_{N,r}(\xi)=\frac{\prod_{i={2r}}^{N+r-1}(\alpha\beta-\gamma\delta q^{i})}{(1-q)^{N-r}}\frac{G_{N,r}(\xi)}{G_{r,r}(\xi)}.
\end{equation}
using the change of variables from 
\eqref{subs3}, \eqref{subs4}, \eqref{subs5}, \eqref{subs6}.
\end{theorem}
\begin{proof}
The case $r=0$ is \cite[Theorem 1.13]{CSSW},
 and the proof of this theorem is analogous to the proof
there.
\end{proof}

Now we return to the proof of Theorem \ref{thm:main2}.  We will
actually prove a refinement of this theorem; to state it,
we need to introduce some notation.
Given a word $X$ in $\{D,E\}^N$, let 
$S_r(X)$ be the set of words which can be obtained  from $X$ by replacing
precisely $r$ letters in $X$ by an $A$.
Given a word $Z\in S_r(X)$, let 
$D(Z)$ (respectively, $E(Z)$) 
be the set of positions of letters that were $D$ (respectively, $E$) in $X$
and became $A$ in $Z$.  
Let $$\inv_E(Z) = \sum_{j\in E(Z)} |\{i\in D(Z) \cup E(Z) \ \vert \ i < j\}|.$$

\begin{theorem}\label{thm:refinement}
Let $\tilde \rho_r = \alpha^r (1-q)^r$.
For any word $X$ in 
$\{D,E\}^N$, we have that 
\begin{equation}\label{eq:refinement}
 \langle W| X |V^r \rangle \cdot \tilde \rho_r = 
\sum_{Z\in S_r(X)} q^{\inv_E(Z)} \alpha^{|D(Z)|} \gamma^{|E(Z)|} 
\frac{\langle W|Z|V\rangle}{\langle W|A^r |V\rangle}.
\end{equation}
\end{theorem}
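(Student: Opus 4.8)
The plan is to fix the word $X$ and to induct on its length $N$, peeling off the last letter. The engine is a clean recursion for the right-hand side of \eqref{eq:refinement}. Writing $\Phi_r(X) = \sum_{Z \in S_r(X)} q^{\inv_E(Z)}\alpha^{|D(Z)|}\gamma^{|E(Z)|}\langle W|Z$ for the associated row vector, so that the right side of \eqref{eq:refinement} is $\Phi_r(X)|V\rangle/\langle W|A^r|V\rangle$, I would split the sum over $Z\in S_r(X'L)$ according to whether the final letter $L$ is turned into an $A$. Tracking how $\inv_E$, $|D(\cdot)|$ and $|E(\cdot)|$ change upon appending $L$ to the right --- the key point being that $\inv_E$ is defined precisely so that creating the $r$-th $A$ on top of a final $E$ contributes the factor $q^{r-1}$ --- yields
\[ \Phi_r(X'D) = \Phi_r(X')\,D + \alpha\,\Phi_{r-1}(X')\,A, \qquad \Phi_r(X'E) = \Phi_r(X')\,E + q^{r-1}\gamma\,\Phi_{r-1}(X')\,A, \]
with $\Phi_0(X)=\langle W|X$ and $\Phi_r(\emptyset)=0$ for $r\ge1$.

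The difficulty is that this recursion couples the length of the word with the number of inserted $A$'s, so the target identity, which only constrains $\Phi_r(X)$ after pairing with $|V\rangle$, is too weak to feed back into the induction. To repair this I would prove the stronger, \emph{word-independent} statement that $\Phi_r(X) = \langle W|X\,T_r$ for a fixed family of operators $T_r$ satisfying
\[ [D,T_r] = \alpha\,T_{r-1}A, \qquad [E,T_r] = q^{r-1}\gamma\,T_{r-1}A, \qquad T_0=\indicator, \quad \langle W|T_r = 0 \ \ (r\ge 1). \]
Granting such $T_r$, the induction closes at once: substituting the hypothesis $\Phi_s(X')=\langle W|X'T_s$ into the recursion reduces the step for $X'D$ to $[D,T_r]=\alpha T_{r-1}A$ and that for $X'E$ to $[E,T_r]=q^{r-1}\gamma T_{r-1}A$, while $\langle W|T_r=0$ handles the empty base word. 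The natural seed is $T_1=\alpha E-\gamma D-\indicator$: since $A=DE-ED$ one has $[D,T_1]=\alpha A$ and $[E,T_1]=\gamma A$, and the boundary relation $\langle W|(\alpha E-\gamma D)=\langle W|$ of Theorem \ref{ansatz} gives exactly $\langle W|T_1=0$ (here $\gamma=-\alpha\,ac$ by the change of variables \eqref{subs1}--\eqref{subs2}).

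The crux, and the step I expect to be hardest, is producing the operators $T_r$ for all $r$: given $T_{r-1}$, the two commutator equations must be \emph{simultaneously} solvable and the solution must be normalized by $\langle W|T_r=0$. This is where the full Matrix Ansatz enters. The compatibility of the two commutators is controlled, through the Jacobi identity and $[E,D]=-A$, by the relations $DA=qAD+A$ and $AE=qEA+A$ of Corollary \ref{cor:ansatz}; I would either solve the system inductively, verifying the resulting integrability condition via these two relations, or exhibit $T_r$ explicitly in the USW representation of Lemma \ref{thm:USW-solution} and check the relations on matrix entries.

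Finally, pairing the identity $\Phi_r(X)=\langle W|X\,T_r$ with $|V\rangle$ reduces \eqref{eq:refinement} to the single vector statement $T_r|V\rangle = \alpha^r(1-q)^r\,\langle W|A^r|V\rangle\,|V^r\rangle$; that is, $T_r|V\rangle$ is supported at height $r$ with the asserted coefficient. I would prove this by a short induction on $r$ using the commutators together with $(\beta D-\delta E)|V\rangle=|V\rangle$. For the base case $r=1$ one checks that the height-$0$ component of $T_1|V\rangle$ vanishes because $\langle W|(\alpha E-\gamma D)|V\rangle=1$, and that its height-$1$ coefficient $\alpha E_{1,0}-\gamma D_{1,0}$ equals $\alpha(1-q)\,A_{0,0}=\alpha(1-q)\langle W|A|V\rangle$, using $D_{0,1}=\tfrac{1}{1-q}$, $E_{0,1}=\tfrac{-ac}{1-q}$ and $\gamma=-\alpha\,ac$. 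This gives \eqref{eq:refinement}, and summing over the underlying words with the appropriate $\xi$-weights then recovers Theorem \ref{thm:main2}.
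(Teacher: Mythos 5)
Your setup is sound: the recursion for $\Phi_r$ obtained by peeling off the last letter is correct (your case analysis of how $\inv_E$, $|D(\cdot)|$, $|E(\cdot)|$ change is right), the reduction of the induction on word length to the operator identities $[D,T_r]=\alpha T_{r-1}A$, $[E,T_r]=q^{r-1}\gamma T_{r-1}A$, $\langle W|T_r=0$ is valid, and your verification of the seed $T_1=\alpha E-\gamma D-\indicator$ is correct. But the proof has a genuine gap exactly where you flag it: the operators $T_r$ for $r\ge 2$ are never constructed, and nothing you say establishes that they exist. The Jacobi identity is only a necessary consequence (it tells you what $[A,T_r]$ would have to be), not a sufficiency criterion for simultaneous solvability of the two commutator equations in the quotient algebra; and even when a solution of the commutator system exists, the normalization $\langle W|T_r=0$ is an infinite list of linear conditions (one per basis vector), not something one can arrange by adjusting a scalar. (For $r=2$ one can check by hand that a quadratic ansatz $T_2=aE^2+bED+cDE+dD^2+eE+fD+g$ works, but only because several coefficient identities vanish ``miraculously''; proving that this persists for all $r$ is the actual content of the theorem.) Likewise the final vector identity $T_r|V\rangle=\alpha^r(1-q)^r\langle W|A^r|V\rangle\,|V^r\rangle$, which carries all the quantitative information --- including the nontrivial normalization $\langle W|A^r|V\rangle$ --- is only checked for $r=1$; the claimed ``short induction on $r$'' is not exhibited, and there is no evidence it would be short.

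For comparison, the paper handles precisely these two difficulties differently: Lemmas \ref{lem:1} and \ref{lem:2} use the rewriting relations $DE=qED+D+E$ and $\langle W|E=\frac{1}{\alpha}\langle W|+\frac{\gamma}{\alpha}\langle W|D$ to reduce \eqref{eq:refinement} to the single word $X=D^N$ (Theorem \ref{prop:suffices}), and then Section \ref{sec:thm} proves that case by explicit computation in the Uchiyama--Sasamoto--Wadati representation: recurrences for $\langle W|A^r\dd^m|V\rangle$ and $\langle W|\dd^N|V^r\rangle$, the polynomials $F_m$, and $q$-binomial identities. Your intertwiner strategy is attractive and may well be viable, but as written it replaces the paper's hard computation with two unproven assertions (existence of $T_r$ and the formula for $T_r|V\rangle$) whose verification would require work of comparable magnitude --- e.g.\ an explicit formula for $T_r$ checked against the relations of Lemma \ref{lem:relations}. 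Until that is supplied, your argument proves the theorem only for $r\le 1$.
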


We next explain how Theorem \ref{thm:refinement} refines Theorem \ref{thm:main2}.
\begin{lemma}\label{lem:implies}
Theorem \ref{thm:refinement} implies Theorem \ref{thm:main2}.
\end{lemma}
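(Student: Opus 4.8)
The plan is to sum the refinement \eqref{eq:refinement} over all words $X\in\{D,E\}^N$ and match both sides to the operator expressions in \eqref{eq:partition}. First I would observe that
$$\sum_{X\in\{D,E\}^N} \langle W| X | V^r\rangle = \langle W| (\xi D+E)^N | V^r\rangle,$$
provided we weight each occurrence of $D$ in $X$ by the fugacity $\xi$; so I must check that the left-hand side of \eqref{eq:refinement}, after this fugacity weighting and summing, reproduces $\langle W|(\xi D+E)^N|V^r\rangle \cdot \tilde\rho_r$. The point is that $\tilde\rho_r = \alpha^r(1-q)^r$ does not depend on $X$, so it factors out of the sum cleanly.

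The heart of the matter is to show that summing the right-hand side over $X$ reconstructs the generating function $[y^r]\langle W|(\xi D+E+yA)^N|V\rangle$. Here is the combinatorial bookkeeping I would carry out: expanding $(\xi D+E+yA)^N$ produces, for each word $Z\in\{D,E,A\}^N$ with exactly $r$ letters equal to $A$, a term $\langle W|Z|V\rangle$ weighted by $y^r$ and by $\xi^{(\#D\text{ in }Z)}$. Each such $Z$ arises from a unique pair $(X,\text{placement})$ where $X\in\{D,E\}^N$ records what each $A$-position ``used to be'': the positions in $D(Z)$ were $D$ in $X$ and the positions in $E(Z)$ were $E$ in $X$. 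Thus summing \eqref{eq:refinement} over all $X$ is exactly a resummation of the $A$-expansion of $(\xi D+E+yA)^N$, grouped by the underlying two-letter word. The weights $q^{\inv_E(Z)}\alpha^{|D(Z)|}\gamma^{|E(Z)|}$ on the right of \eqref{eq:refinement} must then be reconciled with the trivial weight each $Z$ carries in the $A$-expansion; since $\tilde\rho_r=\alpha^r(1-q)^r$ and $|D(Z)|+|E(Z)|=r$, I would factor $\alpha^{|D(Z)|}\gamma^{|E(Z)|}$ against $\alpha^r$ and absorb the $\inv_E(Z)$ statistic into the passage from $\rho_r$ to $\tilde\rho_r$.

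The main obstacle I anticipate is the normalization factor relating $\tilde\rho_r$ and $\rho_r$. Comparing \eqref{eq:rho}, where $\rho_r = \alpha^r(1-q)^r / \prod_{i=0}^{r-1}(\alpha\xi+q^i\gamma)$, with $\tilde\rho_r=\alpha^r(1-q)^r$, one sees that $\rho_r = \tilde\rho_r / \prod_{i=0}^{r-1}(\alpha\xi+q^i\gamma)$. So after summing \eqref{eq:refinement} over $X$, the left-hand side carries $\tilde\rho_r$ whereas \eqref{eq:partition} demands $\rho_r$; the discrepancy is the product $\prod_{i=0}^{r-1}(\alpha\xi+q^i\gamma)$, which should precisely account for the difference between summing $\langle W|Z|V\rangle$ with the extra statistical weights $q^{\inv_E(Z)}\alpha^{|D(Z)|}\gamma^{|E(Z)|}$ on the right of \eqref{eq:refinement} and the plain $A$-expansion appearing in \eqref{eq:partition}.

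I would therefore verify that
$$\sum_{\substack{Z'\in\{D,E,A\}^N\\ \#A=r,\ \text{over fixed }X}} q^{\inv_E(Z')}\alpha^{|D(Z')|}\gamma^{|E(Z')|}\,\langle W|Z'|V\rangle$$
telescopes against the factor $\prod_{i=0}^{r-1}(\alpha\xi+q^i\gamma)$ in the denominator of $\rho_r$, i.e. that the weighted sum over $A$-placements in \eqref{eq:refinement} carries exactly the normalizing product needed to convert $\tilde\rho_r$ into $\rho_r$. Granting Theorem \ref{thm:refinement} (whose proof is presumably established separately using the commutation relations $DA=qAD+A$ and $AE=qEA+A$), this resummation-and-normalization argument is precisely what Lemma \ref{lem:implies} asks for, and it completes the reduction of Theorem \ref{thm:main2} to the refinement.
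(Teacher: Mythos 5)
Your overall strategy coincides with the paper's: sum \eqref{eq:refinement} over all $X\in\{D,E\}^N$ with each $X$ weighted by $\xi^{|X|_D}$, identify the left-hand side with $\langle W|(\xi D+E)^N|V^r\rangle\cdot\tilde\rho_r$, identify the right-hand side with $[y^r]\langle W|(\xi D+E+yA)^N|V\rangle/\langle W|A^r|V\rangle$ times $\prod_{i=0}^{r-1}(\alpha\xi+q^i\gamma)$, and cancel that product using $\tilde\rho_r=\rho_r\cdot\prod_{i=0}^{r-1}(\alpha\xi+q^i\gamma)$. However, there is a genuine gap: the step you defer (``I would therefore verify that \dots telescopes against the factor \dots'') is the entire mathematical content of the lemma, and it is never carried out. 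Concretely, what must be proved is that for each \emph{fixed} word $Z\in\{D,E,A\}^N$ with $r$ $A$'s, summing over the $2^r$ words $X$ from which $Z$ can arise (equivalently, over the $2^r$ assignments of each $A$ to an original $D$ or $E$), the weights $\xi^{|X|_D-|Z|_D}\,q^{\inv_E(Z)}\,\alpha^{|D(Z)|}\,\gamma^{|E(Z)|}$ sum to exactly $\prod_{i=0}^{r-1}(\alpha\xi+q^i\gamma)$. Your write-up asserts this should happen but gives no argument, and the vague phrase about ``absorbing the $\inv_E(Z)$ statistic into the passage from $\rho_r$ to $\tilde\rho_r$'' does not supply one.

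The missing idea is a factorization property of the statistic $\inv_E$. Since $\inv_E(Z)$ counts, for each $E$-assigned $A$, \emph{all} replaced positions to its left (whether those were assigned to $D$ or to $E$), the $i$th $A$ in left-to-right order always sees exactly $i-1$ earlier $A$'s, independently of how the other $A$'s are assigned. Hence the sum over assignments factors position by position: the $i$th $A$ contributes $\alpha\xi$ if it came from a $D$ (the $\xi$ from $|X|_D$, the $\alpha$ from $\alpha^{|D(Z)|}$) and $q^{i-1}\gamma$ if it came from an $E$, so the total is $\prod_{i=1}^{r}(\alpha\xi+q^{i-1}\gamma)$, as required. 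Relatedly, your claim that each $Z$ ``arises from a unique pair $(X,\mathrm{placement})$'' is wrong as stated: $Z$ alone does not determine $D(Z)$ and $E(Z)$ (these depend on the choice of $X$), and each $Z$ arises from $2^r$ such pairs. It is precisely this $2^r$-fold multiplicity, carrying assignment-dependent weights, that generates the product $\prod_{i=0}^{r-1}(\alpha\xi+q^i\gamma)$; treating the correspondence as a bijection would make the appearance of that product inexplicable. Once the factorization argument is supplied, your reduction becomes the paper's proof.
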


\begin{proof}
Define $|X|_D$ to be the number of $D$'s that occur in a word $X$.
We take \eqref{eq:refinement} and sum it over all words $X$ in 
$\{D,E\}^N$, keeping track of the number of $D$'s in $X$, and obtaining
\begin{equation}\label{eq:refinement2}
\sum_{X\in \{D,E\}^N} 
\xi^{|X|_D} \langle W| X |V^r \rangle \cdot \tilde \rho_r = 
\sum_{X\in \{D,E\}^N} 
\sum_{Z\in S_r(X)} \xi^{|X|_D} q^{\inv_E(Z)} \alpha^{|D(Z)|} \gamma^{|E(Z)|} 
\frac{\langle W|Z|V\rangle}{\langle W|A^r |V\rangle}.
\end{equation}

On the left-hand side of \eqref{eq:refinement2}, we obtain 
$\langle W| (\xi D+E)^N | V^r \rangle \cdot \tilde \rho_r = 
\langle W| (\xi D+E)^N | V^r \rangle \cdot \rho_r   \cdot \prod_{i=0}^{r-1} (\alpha \xi + q^i \gamma).$

Now we will analyze the right-hand side of \eqref{eq:refinement2}.
Let us fix a word $Z \in \{D, E, A\}^N$ which contains precisely $r$ 
$A$'s, and determine the coefficient of 
$\frac{\langle W|Z|V\rangle}{\langle W|A^r |V\rangle}$. 
Note that $|X|_D = |Z|_D + D(Z)$.  Also,
$\frac{\langle W|Z|V\rangle}{\langle W|A^r |V\rangle}$
will appear 
$2^r$ times on the right-hand side -- 
this is the number of ways that each of those $A$'s in $Z$ could have 
come from a $D$ or an $E$ in a word $X\in \{D,E\}^N$.
Now note that the statistics 
$E(Z), |D(Z)|, |E(Z)|$ only depend on the substring of 
$r$ $A$'s in $Z$, and which letter each of those $A$'s originally corresponded to in $X$.
Let us consider each of the $A$'s in $Z$ from left to right.  
If the first $A$ came from a $D$ (respectively, an $E$), then 
we pick up a factor of $\alpha \xi $ (respectively, $\gamma$) 
in $\xi^{|X|_D} q^{\inv_E(Z)} \alpha^{|D(Z)|} \gamma^{|E(Z)|}$.
If the second $A$ came from a $D$ (respectively, an $E$), then 
we pick up a factor of $\alpha \xi $ (respectively, $q \gamma$) 
in $\xi^{|X|_D} q^{\inv_E(Z)} \alpha^{|D(Z)|} \gamma^{|E(Z)|}$ -- the $q$ comes from 
the inversion between the first and second $A$.  
And in general, if the 
$i$th $A$ came from a $D$ (respectively, an $E$), then 
we pick up a factor of $\alpha \xi$ (respectively, $q^{i-1} \gamma$).  
Therefore the coefficient of 
$\frac{\langle W|Z|V\rangle}{\langle W|A^r |V\rangle}$ on the 
right-hand side of \eqref{eq:refinement2} is 
$(\alpha \xi+\gamma)(\alpha\xi+q\gamma) \dots (\alpha\xi+q^{r-1} \gamma)$.
It follows that the right-hand side equals 
$[y^r]\frac{\langle W|(D\xi +E+yA)^N|V \rangle}{\langle W| A^r| V\rangle} \cdot
\prod_{i=0}^{r-1} (\alpha\xi+q^{i} \gamma)$.

Comparing our expressions for the left-hand side and right-hand side
yields \eqref{eq:partition}.
\end{proof}

\begin{theorem}\label{prop:suffices}
To prove Theorem \ref{thm:refinement}, it suffices to prove Theorem
\ref{thm:refinement} in the case that $X = D^N$.  In other words,
it suffices to prove that 
\begin{equation}\label{eq:suffices}
\langle W|D^N|V^r \rangle \cdot \tilde \rho_r = 
\alpha^r \sum_{Z\in S_r(D^N)} \frac{\langle W|Z|V \rangle}{\langle W|A^r |V\rangle}
= [y^r] \frac{\langle W|(D+y\alpha A)^N|V \rangle}{\langle W|A^r|V \rangle}.
\end{equation}
\end{theorem}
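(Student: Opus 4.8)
The statement to prove is Theorem~\ref{prop:suffices}: that proving Theorem~\ref{thm:refinement} for the single word $X=D^N$ suffices to establish it for all words $X\in\{D,E\}^N$. My plan is to reduce an arbitrary word $X$ to the case $D^N$ by exploiting the recursive structure of how the identity \eqref{eq:refinement} is built from the letters of $X$, one position at a time. The key observation is that both sides of \eqref{eq:refinement} are, in an appropriate sense, \emph{additive over the choice of letter at each position}: replacing a $D$ by an $E$ at a fixed position changes the word $X$ but interacts with the summation over $Z\in S_r(X)$ in a controlled way, because the statistics $\inv_E(Z)$, $|D(Z)|$, and $|E(Z)|$ are defined purely in terms of which positions were promoted to $A$ and whether they came from a $D$ or an $E$.

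\medskip

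\textbf{Key steps.} First I would set up a bookkeeping device that makes the dependence on $X$ transparent. For each position $i$ of the word, introduce a formal variable (or use the existing $\xi$ as in Lemma~\ref{lem:implies}) that records whether that position carries a $D$ or an $E$; then both sides of \eqref{eq:refinement} become polynomials in these position-indexed variables, and the claim is that the two polynomials agree coefficient-by-coefficient. The coefficient of a given monomial on each side corresponds to fixing, at every position, whether the letter is $D$ or $E$---which is exactly choosing a single word $X$. Second, I would argue that it is enough to check agreement when \emph{all} positions carry the same letter, by showing that the difference of the two sides, viewed as a function of the letter-assignment, is multilinear in the position variables and vanishes identically once it vanishes on the ``all-$D$'' diagonal term. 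Concretely, the summation $\sum_{Z\in S_r(X)}$ decomposes according to the set of $r$ promoted positions, and for a fixed promotion pattern the contribution factors through data that does not see whether the \emph{non-promoted} positions are $D$ or $E$; the promoted positions contribute $\alpha\xi$ or $\gamma$ (with the appropriate $q$-power from $\inv_E$) depending on their original letter, exactly mirroring the $(\alpha\xi+q^i\gamma)$ factors already extracted in the proof of Lemma~\ref{lem:implies}. Third, having isolated the per-position structure, I would observe that the entire identity is built by substituting, at each of the $N$ positions independently, either the ``$D$-rule'' or the ``$E$-rule'', and that linearity in these substitutions reduces the general case to the extreme case where every position uses the $D$-rule, namely $X=D^N$. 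The final equality in \eqref{eq:suffices}, relating the $A$-promoted sum to $[y^r]\langle W|(D+y\alpha A)^N|V\rangle$, is then just the generating-function bookkeeping for inserting $r$ copies of $\alpha A$ among $N$ copies of $D$, which is immediate from expanding the product.

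\medskip

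\textbf{Main obstacle.} The delicate point is the second step: verifying that the left-hand side $\langle W|X|V^r\rangle$ also transforms in the required multilinear fashion when a $D$ is swapped for an $E$ at a single position. Unlike the right-hand side, where the dependence on $X$ is explicit through the promotion statistics, the left-hand side involves the noncommuting matrices $D$ and $E$ and the vector $|V^r\rangle$, so I must check that swapping one letter does not introduce cross-terms that break the per-position factorization. I expect to handle this by using only the \emph{linear} structure of the map $X\mapsto\langle W|X|V^r\rangle$ as a multilinear functional on the tensor product of $N$ copies of $\mathrm{span}\{D,E\}$: since matrix multiplication is bilinear, $\langle W|X|V^r\rangle$ is automatically multilinear in the $N$ letters, and likewise the right-hand side of \eqref{eq:refinement} is multilinear in the same slots once one checks that promoting a position to $A$ respects this linearity. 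Granting multilinearity on both sides, agreement on the single vertex $X=D^N$ of the Boolean cube $\{D,E\}^N$ would \emph{not} by itself force agreement everywhere; rather, one uses the fact that the $E$-rule at a position is obtained from the $D$-rule by a fixed substitution (the relations $DA=qAD+A$, $AE=qEA+A$ governing how $A$ commutes past $D$ and $E$), so that the inductive step propagates the $D^N$ identity to words with one $E$, then two, and so on. Making this induction precise---tracking exactly how $\inv_E$, $|D(Z)|$, $|E(Z)|$ shift when one promoted position changes its origin letter---is the technical heart of the argument.
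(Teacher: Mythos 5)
Your proposal has a genuine gap, and you half-acknowledge it yourself: after setting up the multilinearity framework, you concede that agreement at the single vertex $X=D^N$ of the Boolean cube cannot force agreement at the other $2^N-1$ vertices, so the multilinear structure buys nothing, and the entire burden falls on the final, vague induction ``propagating the $D^N$ identity to words with one $E$, then two, and so on.'' But the mechanism you cite for that induction --- the relations $DA = qAD + A$ and $AE = qEA + A$ --- cannot do the job: these only commute $A$ past $D$ and $E$, and never convert an $E$ into $D$'s, so they give no way to connect $\langle W|X|V^r\rangle$ for a word $X$ containing an $E$ to the all-$D$ case. The indispensable missing ingredient is the boundary relation of the Matrix Ansatz, $\langle W|(\alpha E - \gamma D) = \langle W|$, equivalently $\langle W|E = \frac{1}{\alpha}\langle W| + \frac{\gamma}{\alpha}\langle W|D$; this is the only rewriting rule that trades an $E$ for $D$'s, and it is also the source of the factors $\alpha^{|D(Z)|}\gamma^{|E(Z)|}$ on the right-hand side of \eqref{eq:refinement}, which no combination of the bulk and $A$-commutation relations can produce. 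A reduction argument that never invokes it cannot close.

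Moreover, since the boundary relation only applies to an $E$ sitting immediately to the right of $\langle W|$, the reduction must be organized in two stages, which is exactly what the paper does: Lemma \ref{lem:1} first reduces an arbitrary word to ``normal form'' $E^{\ell}D^m$ by repeated use of the bulk relation $DE = qED + D + E$, verifying case by case (over $Z_X DE Z_Y$, $Z_X AE Z_Y$, $Z_X DA Z_Y$, $Z_X AA Z_Y$) that the statistics $\inv_E$, $|D(Z)|$, $|E(Z)|$ transform compatibly on both sides of \eqref{eq:refinement}; Lemma \ref{lem:2} then strips the $E$'s off the left end one at a time using the boundary relation, via a double induction on the number of $E$'s and the number of rewriting steps. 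Note also that these rewriting rules relate words of \emph{different lengths} ($XDEY$ is rewritten in terms of $XEDY$, $XDY$, and $XEY$), so they are not operations within your position-by-position multilinear framework at all; that framework, borrowed from the proof of Lemma \ref{lem:implies} (which runs in the opposite direction, deducing Theorem \ref{thm:main2} from Theorem \ref{thm:refinement}), is not a setting in which the verification you defer as ``the technical heart'' can actually be carried out.
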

Note that the second equality of \eqref{eq:suffices} is obvious.
Theorem \ref{prop:suffices} is a consequence of the following two lemmas.

\begin{lemma}\label{lem:1}
If Theorem \ref{thm:refinement} is true for all words $X$ of the form
$E^{\ell} D^m$, then it is true for all words $X$ in the letters $D$ and $E$.
\end{lemma}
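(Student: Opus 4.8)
The plan is to use the bulk relation $DE - qED = D + E$ repeatedly to sort the letters of $X$ so that all $E$'s precede all $D$'s; once $X$ has been reduced to the form $E^\ell D^m$ there is nothing left to prove, by hypothesis. Concretely, I would induct on the pair $(N, \operatorname{inv}_{DE}(X))$ ordered lexicographically, where $\operatorname{inv}_{DE}(X)$ counts the pairs $i<j$ with $X_i = D$ and $X_j = E$. The base case $\operatorname{inv}_{DE}(X)=0$ is exactly $X = E^\ell D^m$. If $\operatorname{inv}_{DE}(X) > 0$, then $X$ contains a $D$ immediately followed by an $E$, so we may write $X = U\,DE\,V$ and expand
$$\langle W| U\,DE\,V |V^r\rangle = q\,\langle W| U\,ED\,V |V^r\rangle + \langle W| U\,D\,V |V^r\rangle + \langle W| U\,E\,V |V^r\rangle.$$
Swapping $DE$ to $ED$ lowers $\operatorname{inv}_{DE}$ by exactly one and creates no new inversions with the letters of $U$ or $V$, while $U\,D\,V$ and $U\,E\,V$ are shorter; hence the inductive hypothesis applies to all three words on the right. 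Writing $R(X)$ for the right-hand side of \eqref{eq:refinement}, this expresses $\langle W|X|V^r\rangle\cdot\tilde\rho_r$ as $q\,R(U\,ED\,V) + R(U\,D\,V) + R(U\,E\,V)$.

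It then remains to prove that $R$ itself satisfies the same three-term recursion,
$$R(U\,DE\,V) = q\,R(U\,ED\,V) + R(U\,D\,V) + R(U\,E\,V),$$
after which comparison with the expansion above closes the induction. To establish this, I would partition $S_r(U\,DE\,V)$ into four classes according to whether the distinguished $D$ (at position $k$) and the distinguished $E$ (at position $k+1$) are among the $r$ letters promoted to $A$: the pair survives as $DE$, $AE$, $DA$, or $AA$. In each class I apply the matching relation — $DE - qED = D+E$, $AE = qEA + A$, or $DA = qAD + A$ (and nothing in the $AA$ case) — to the two letters occupying positions $k,k+1$ inside $\langle W|Z|V\rangle$, and I identify each resulting term with a unique element of $S_r(U\,ED\,V)$, $S_r(U\,D\,V)$, or $S_r(U\,E\,V)$, reattributing each surviving $A$ to the letter it now sits over.

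The heart of the argument — and the step I expect to be the main obstacle — is checking that the weights $q^{\inv_E(Z)}\alpha^{|D(Z)|}\gamma^{|E(Z)|}$ transform correctly under these identifications. The factor of $q$ multiplying $R(U\,ED\,V)$ must be produced in every class, but by two different mechanisms: when at most one of the two distinguished positions is promoted, the $q$ comes from the explicit $q$ in the relation being applied; when both positions become $A$, no relation is applied, and the required $q$ must instead be supplied by the change in $\inv_E(Z)$ — precisely the single inversion between the two $A$'s, whose $D$- and $E$-types are interchanged when the word is reread as an element of $S_r(U\,ED\,V)$. In each class one must also verify that this reattribution preserves $|D(Z)|$ and $|E(Z)|$, and that the deletion terms ($+D$, $+E$, or $+A$) land in $S_r(U\,D\,V)$ and $S_r(U\,E\,V)$ with exactly the predicted weights. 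Once these matchings are confirmed class by class and summed over the unaffected letters coming from $U$ and $V$, the recursion for $R$ follows and the induction is complete.
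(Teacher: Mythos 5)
Your proposal is correct and takes essentially the same approach as the paper's proof: the paper likewise inducts on the number of applications of $DE=qED+D+E$ needed to reach the normal form $E^{\ell}D^m$ (an ordering equivalent to your count $\inv_{DE}$), expands $U=XDEY$ as $qXEDY+XDY+XEY$, and verifies the identity via the same four-way split of $S_r(XDEY)$ into the cases $Z_XDEZ_Y$, $Z_XAEZ_Y$, $Z_XDAZ_Y$, $Z_XAAZ_Y$, applying $AE=qEA+A$ and $DA=qAD+A$ in the mixed cases. In particular, your resolution of the $AA$ case is exactly how the paper's bookkeeping closes: the explicit $q$ multiplying the $ED$ side is matched by the extra unit of $\inv_E$ on the $DE$ side coming from the inversion between the two distinguished $A$'s, whose $D$/$E$ types are interchanged when the word is reread in $S_r(XEDY)$.
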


\begin{proof}
Let $U$ be an arbitrary word in $D$ and $E$. 
By Corollary \ref{cor:ansatz},  the matrices $D$, $E$, and $A$ satisfy the relations 
of Theorem \ref{ansatz2}.  In particular we have that 
$DE = qED+D+E$.  Note that by repeatedly applying the relation 
$DE=qED+D+E$ (to replace instances of $DE$ by $qED+D+E$), 
we can put $U$ into ``normal form" -- that is, we can 
write $U$ as a linear combination of words of the form $E^{\ell} D^m$.
We will prove Lemma \ref{lem:1} by induction on the number of times one must
apply the relation $DE=qED+D+E$ to put $U$ into normal form.  The base case 
is the words which already have the form $E^{\ell} D^m$.

For the inductive step, let us write 
$U = XDEY$, where $X$ and $Y$ are words in $D$ and $E$.
We have that $XDEY = qXEDY+XDY+XEY$.
By the induction hypothesis, Theorem \ref{thm:refinement} is true for 
each of the words $XEDY$, $XDY$, and $XEY$.  We want to show that \eqref{eq:refinement}
also holds for $XDEY$.

Let us first analyze the right-hand side of \eqref{eq:refinement} when applied to 
the word $U = XDEY$.  We will write each word $Z\in S_r(XDEY)$ as 
$Z_X DE Z_Y$ or 
$Z_X AE Z_Y$ or 
$Z_X DA Z_Y$ or 
$Z_X AA Z_Y$, 
where $Z_X$ and $Z_Y$ have been obtained from $X$ and $Y$, respectively, by replacing
some of the letters by $A$'s.  
We will also write $Z_X Z_Y$ to denote the word obtained from one of the four 
words above by deleting the two letters between $Z_X$ and $Z_Y$.

We have that 
$\sum_{Z\in S_r(U)} 
q^{\inv_E(Z)} \alpha^{|D(Z)|} \gamma^{|E(Z)|} 
\frac{\langle W|Z|V\rangle}{\langle W|A^r |V\rangle}$ is equal to 
\begin{align*}
& \sum_{Z=Z_XDEZ_Y \in S_r(U)} 
q^{\inv_E(Z_X Z_Y)} \alpha^{|D(Z_X Z_Y)|} \gamma^{|E(Z_X Z_Y)|} 
\frac{\langle W|Z|V\rangle}{\langle W|A^r |V\rangle}\ \\
+& \sum_{Z=Z_XAEZ_Y \in S_r(U)} 
q^{|E(Z_Y)|+\inv_E(Z_X Z_Y)} \alpha^{|D(Z_X Z_Y)|+1} \gamma^{|E(Z_X Z_Y)|} 
\frac{\langle W|Z|V\rangle}{\langle W|A^r |V\rangle}\ \\
+& \sum_{Z=Z_XDAZ_Y \in S_r(U)} 
q^{|E(Z_Y)|+|D(Z_X)|+|E(Z_X)|+\inv_E(Z_X Z_Y)} \alpha^{|D(Z_X Z_Y)|} \gamma^{|E(Z_X Z_Y)|+1} 
\frac{\langle W|Z|V\rangle}{\langle W|A^r |V\rangle}\ \\
+& \sum_{Z=Z_XAAZ_Y \in S_r(U)} 
q^{2|E(Z_Y)|+|D(Z_X)|+|E(Z_X)|+1+\inv_E(Z_X Z_Y)} \alpha^{|D(Z_X Z_Y)|+1} \gamma^{|E(Z_X Z_Y)|+1} 
\frac{\langle W|Z|V\rangle}{\langle W|A^r |V\rangle}. \\
\end{align*}
Note that in the first three terms above, where $Z$ has the form 
$Z_X DEZ_Y$, $Z_X AE Z_Y$, and $Z_X DA Z_Y$, we can apply the relations 
$DE=qED+D+E$, 
$AE = qEA+A$, and
$DA = qAD+A$, 
to rewrite $\langle W|Z|V\rangle$ as 
$\langle W| Z_X (qED+D+E) Z_Y | V \rangle$,
$\langle W| Z_X (qEA+A) Z_Y | V \rangle$, and
$\langle W| Z_X (qAD+A) Z_Y | V \rangle$, respectively.

Now let us analyze the left-hand side of 
\eqref{eq:refinement} when applied to 
the word $U = XDEY$.  
Applying the re-writing rule $DE=qED+D+E$ and using the induction hypothesis, 
we have that 
$\langle W| X(DE)Y |V^r \rangle \cdot \tilde \rho_r $ is equal to  
\begin{align*}
&(q\langle W| X(ED)Y |V^r \rangle   +
\langle W| XDY |V^r \rangle   +
\langle W| XEY |V^r \rangle) \cdot \tilde \rho_r    \\
=&\sum_{Z\in S_r(XEDY)} 
q^{\inv_E(Z)+1} \alpha^{|D(Z)|} \gamma^{|E(Z)|} 
\frac{\langle W|Z|V\rangle}{\langle W|A^r |V\rangle}\ + 
\sum_{Z\in S_r(XDY)} 
q^{\inv_E(Z)} \alpha^{|D(Z)|} \gamma^{|E(Z)|} 
\frac{\langle W|Z|V\rangle}{\langle W|A^r |V\rangle}\\
&+ \sum_{Z\in S_r(XEY)} 
q^{\inv_E(Z)} \alpha^{|D(Z)|} \gamma^{|E(Z)|} 
\frac{\langle W|Z|V\rangle}{\langle W|A^r |V\rangle}  
\end{align*}
\begin{align*}
=& \sum_{Z=Z_XEDZ_Y \in S_r(XEDY)} 
q^{\inv_E(Z_X Z_Y)+1} \alpha^{|D(Z_X Z_Y)|} \gamma^{|E(Z_X Z_Y)|} 
\frac{\langle W|Z|V\rangle}{\langle W|A^r |V\rangle}\ \\
&+ \sum_{Z=Z_XEAZ_Y \in S_r(XEDY)} 
q^{|E(Z_Y)|+1+\inv_E(Z_X Z_Y)} \alpha^{|D(Z_X Z_Y)|+1} \gamma^{|E(Z_X Z_Y)|} 
\frac{\langle W|Z|V\rangle}{\langle W|A^r |V\rangle}\ \\
&+ \sum_{Z=Z_XADZ_Y \in S_r(XEDY)} 
q^{|D(Z_X)|+|E(Z_X)|+|E(Z_Y)|+1+\inv_E(Z_X Z_Y)} \alpha^{|D(Z_X Z_Y)|} \gamma^{|E(Z_X Z_Y)|+1} 
\frac{\langle W|Z|V\rangle}{\langle W|A^r |V\rangle}\ \\
&+ \sum_{Z=Z_XAAZ_Y \in S_r(XEDY)} 
q^{|D(Z_X)|+|E(Z_X)|+2|E(Z_Y)|+1+\inv_E(Z_X Z_Y)} \alpha^{|D(Z_X Z_Y)|+1} \gamma^{|E(Z_X Z_Y)|+1} 
\frac{\langle W|Z|V\rangle}{\langle W|A^r |V\rangle}\ \\
&+ \sum_{Z=Z_XDZ_Y \in S_r(XDY)} 
q^{\inv_E(Z_X Z_Y)} \alpha^{|D(Z_X Z_Y)|} \gamma^{|E(Z_X Z_Y)|} 
\frac{\langle W|Z|V\rangle}{\langle W|A^r |V\rangle}\ \\
&+ \sum_{Z=Z_XAZ_Y \in S_r(XDY)} 
q^{|E(Z_Y)|+\inv_E(Z_X Z_Y)} \alpha^{|D(Z_X Z_Y)|+1} \gamma^{|E(Z_X Z_Y)|} 
\frac{\langle W|Z|V\rangle}{\langle W|A^r |V\rangle}\ \\
&+ \sum_{Z=Z_XEZ_Y \in S_r(XEY)} 
q^{\inv_E(Z_X Z_Y)} \alpha^{|D(Z_X Z_Y)|} \gamma^{|E(Z_X Z_Y)|} 
\frac{\langle W|Z|V\rangle}{\langle W|A^r |V\rangle}\ \\
&+ \sum_{Z=Z_XAZ_Y \in S_r(XEY)} 
q^{|D(Z_X)|+|E(Z_X)|+|E(Z_Y)|+\inv_E(Z_X Z_Y)} \alpha^{|D(Z_X Z_Y)|} \gamma^{|E(Z_X Z_Y)|+1} 
\frac{\langle W|Z|V\rangle}{\langle W|A^r |V\rangle}.
\end{align*}

Now if we compare our expressions for the left-hand side and right-hand side of 
\eqref{eq:refinement}, we see that they are equal.  This completes the proof. 
\end{proof}

\begin{lemma}\label{lem:2}
If Theorem \ref{thm:refinement} is true for all words $X$ of the form
$D^N$, then it is true for all words $X$ of the form $E^{\ell} D^m$.
\end{lemma}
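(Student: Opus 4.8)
The plan is to induct on $\ell$, the number of leading $E$'s. The base case $\ell=0$ is exactly the hypothesis of the lemma, namely that Theorem \ref{thm:refinement} holds for $D^N=D^m$. For the inductive step I would peel the leftmost $E$ off of $X=E^\ell D^m$ using the \emph{left} boundary relation $\langle W|(\alpha E-\gamma D)=\langle W|$ (valid by Corollary \ref{cor:ansatz}), which rearranges to $\langle W|E=\frac{1}{\alpha}\langle W|+\frac{\gamma}{\alpha}\langle W|D$. Applying this to the left-hand side of \eqref{eq:refinement} gives
\[
\langle W| E^\ell D^m | V^r\rangle = \frac{1}{\alpha}\langle W| E^{\ell-1}D^m |V^r\rangle + \frac{\gamma}{\alpha}\langle W| D E^{\ell-1}D^m |V^r\rangle,
\]
so that (since $\tilde\rho_r$ depends only on $r$) the left-hand side of \eqref{eq:refinement} for $E^\ell D^m$ is $\frac{1}{\alpha}$ times the left-hand side for $E^{\ell-1}D^m$ plus $\frac{\gamma}{\alpha}$ times the left-hand side for the non-standard word $D E^{\ell-1}D^m$.

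To invoke the inductive hypothesis I need Theorem \ref{thm:refinement} to hold not only for $E^{\ell-1}D^m$ but also for $D E^{\ell-1}D^m$, and this is where the first subtlety lies. The key observation is that $D E^{\ell-1}D^m$ contains only $\ell-1$ letters equal to $E$, and the rewriting rule $DE\mapsto qED+D+E$ used in the proof of Lemma \ref{lem:1} never increases the number of $E$'s in a word (the terms $qED$ and $E$ keep the same $E$-count, while $D$ decreases it). Consequently the induction in Lemma \ref{lem:1}, which is on the number of $DE$-rewrites needed to reach normal form, applies verbatim to words with at most $\ell-1$ letters $E$, and shows that refinement holds for every such word as soon as it holds for all normal-form words $E^{\ell'}D^{m'}$ with $\ell'\le\ell-1$. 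Those normal-form cases are exactly the inductive hypothesis, so refinement is available for $D E^{\ell-1}D^m$.

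It then remains to match the right-hand sides. On the right-hand side of \eqref{eq:refinement} for $E^\ell D^m$ I would split the sum over $Z\in S_r(E^\ell D^m)$ according to whether the leading $E$ survives or is replaced by an $A$. In the surviving case $Z=EZ'$ with $Z'\in S_r(E^{\ell-1}D^m)$, and peeling $\langle W|E=\frac{1}{\alpha}\langle W|+\frac{\gamma}{\alpha}\langle W|D$ inside $\langle W|Z|V\rangle$ splits this contribution into a $\frac{1}{\alpha}$-piece that reassembles the right-hand side for $E^{\ell-1}D^m$ and a $\frac{\gamma}{\alpha}$-piece that reassembles precisely the leading-$D$ part of the right-hand side for $D E^{\ell-1}D^m$ (the statistics $\inv_E$, $|D(\cdot)|$, $|E(\cdot)|$ are unchanged, since the prepended letter is not an $A$). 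In the replaced case $Z=A\hat Z$ with $\hat Z\in S_{r-1}(E^{\ell-1}D^m)$ and the new $A$ of type $E$; prepending an $E$-type $A$ shifts $\inv_E$ by $+|E(\hat Z)|$ and raises $|E(\cdot)|$ by one. A direct comparison shows this contribution equals $\frac{\gamma}{\alpha}$ times the leading-$A$-from-$D$ part of the right-hand side for $D E^{\ell-1}D^m$: prepending a $D$-type $A$ produces the \emph{same} inversion shift $+|E(\hat Z)|$, and the two contributions differ only by carrying $\alpha^{|D(\hat Z)|+1}$ versus $\gamma^{|E(\hat Z)|+1}$, a discrepancy that the factor $\frac{\gamma}{\alpha}$ exactly absorbs. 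Assembling these pieces shows that the right-hand side for $E^\ell D^m$ equals $\frac{1}{\alpha}$(right-hand side for $E^{\ell-1}D^m$)$+\frac{\gamma}{\alpha}$(right-hand side for $D E^{\ell-1}D^m$), which by the inductive hypothesis equals the left-hand side computed above.

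I expect the main obstacle to be two-fold. First, one must justify cleanly that refinement may be invoked for the non-standard word $D E^{\ell-1}D^m$; this is handled by the $E$-count monotonicity of the normal-form rewriting described above. Second, the precise tracking of $\inv_E$ under prepending a letter is delicate, and the crucial — and at first surprising — fact is that an $E$-type and a $D$-type leading $A$ induce exactly the same inversion shift $+|E(\hat Z)|$, so that the single factor $\frac{\gamma}{\alpha}$ produced by the boundary relation accounts for the entire difference between the two replaced-case contributions.
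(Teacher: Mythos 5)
Your proposal is correct and follows essentially the same route as the paper: both peel the leading $E$ via $\langle W|E=\frac{1}{\alpha}\langle W|+\frac{\gamma}{\alpha}\langle W|D$, both justify invoking the refinement for the non-normal-form word $DE^{\ell-1}D^m$ by the fact that the rewriting $DE\mapsto qED+D+E$ never increases the $E$-count (the paper phrases this as a double induction on $\ell$ and on the number of rewrites), and both match the two sides by splitting the sum over $Z$ according to whether the leading letter survives or becomes an $A$. Your verification that an $E$-type and a $D$-type leading $A$ induce the identical inversion shift $+|E(\hat Z)|$ is exactly the bookkeeping implicit in the paper's displayed computation.
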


\begin{proof}
We will prove that Theorem \ref{thm:refinement} is true for any word $U$
containing exactly $\ell$ $E$'s by induction on $\ell$ and on $n$,
the number of applications of $DE=qED+D+E$ which are necessary to put the 
word in normal form.

If $U$ has the form $X (DE) Y$, then the proof of Lemma \ref{lem:1} shows
that the theorem holds for $U$, by rewriting 
$U = X(DE)Y = qX(ED)Y + XDY+XEY$.  (Note that the process of rewriting does
not increase the number of $E$'s in the word.)

Otherwise we can assume that $U$ has the form $U = E^{\ell} D^m$.
Let us first analyze the left-hand side of \eqref{eq:refinement} for the word $U$.
By Corollary \ref{cor:ansatz} and 
Theorem \ref{ansatz2}, we have the relation
$\langle W| E = \frac{1}{\alpha} \langle W| + \frac{\gamma}{\alpha} \langle W|D$.

Using the induction hypothesis,  we have that  $\langle W| E^{\ell} D^m |V^r \rangle \cdot \tilde \rho_r$ is equal to 
\begin{align*}
(\frac{1}{\alpha} &\langle W| E^{\ell-1} D^m |V^r \rangle  +
\frac{\gamma}{\alpha} \langle W| D E^{\ell-1} D^m |V^r \rangle) \cdot \tilde \rho_r\\
=& \sum_{Z \in S_r(E^{\ell-1}D^m)} q^{\inv_E(Z)} \alpha^{|D(Z)|-1} \gamma^{|E(Z)|} 
\frac{\langle W|Z|V\rangle}{\langle W|A^r|V\rangle} \\
&+ 
\sum_{Z=D Z_X \in S_r(D E^{\ell-1}D^m)} q^{\inv_E(Z_X)} \alpha^{|D(Z_X)|-1} \gamma^{|E(Z_X)|+1} 
\frac{\langle W|Z|V\rangle}{\langle W|A^r|V\rangle} \\
&+ 
\sum_{Z=A Z_X \in S_r(D E^{\ell-1}D^m)} q^{|E(Z_X)|+\inv_E(Z)} 
\alpha^{|D(Z_X)|} \gamma^{|E(Z_X)|+1} 
\frac{\langle W|Z|V\rangle}{\langle W|A^r|V\rangle}. 
\end{align*}

Analyzing the right-hand side of \eqref{eq:refinement} for the word $U$,
and again using the relation 
$\langle W| E = \frac{1}{\alpha} \langle W| + \frac{\gamma}{\alpha} \langle W|D$,
we have that 
\begin{align*}
& \sum_{Z=EZ_X \in S_r(E^{\ell}D^m)} q^{\inv_E(Z_X)} \alpha^{|D(Z_X)|} \gamma^{|E(Z_X)|} 
\frac{\langle W|Z|V\rangle}{\langle W|A^r|V\rangle} \\
&+
\sum_{Z=AZ_X \in S_r(E^{\ell}D^m)} q^{|E(Z_X)|+\inv_E(Z_X)} \alpha^{|D(Z_X)|} \gamma^{|E(Z_X)|+1} 
\frac{\langle W|Z|V\rangle}{\langle W|A^r|V\rangle} \\
=& \sum_{Z=EZ_X \in S_r(E^{\ell}D^m)} q^{\inv_E(Z_X)} \alpha^{|D(Z_X)|} \gamma^{|E(Z_X)|} 
\frac{\frac{1}{\alpha} \langle W| Z_X|V\rangle+ \frac{\gamma}{\alpha} \langle W|DZ_X|V \rangle}{\langle W|A^r|V\rangle} \\
&+
\sum_{Z=AZ_X \in S_r(E^{\ell}D^m)} q^{|E(Z_X)|+\inv_E(Z_X)} \alpha^{|D(Z_X)|} \gamma^{|E(Z_X)|+1} 
\frac{\langle W|Z|V\rangle}{\langle W|A^r|V\rangle}.
\end{align*}

Comparing our expressions for the left-hand side and right-hand side of 
\eqref{eq:refinement}, we see that they are equal.  This completes the proof.
\end{proof}

\begin{theorem}\label{thm:finalcheck}
The equation \eqref{eq:suffices} holds.
More specifically, \begin{equation*}
\langle W|D^N|V^r \rangle \cdot \tilde \rho_r = 
[y^r] \frac{\langle W|(D+y\alpha A)^N|V \rangle}{\langle W|A^r|V \rangle}.
\end{equation*}
\end{theorem}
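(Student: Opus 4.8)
The plan is to discard the second (obvious) equality in \eqref{eq:suffices} and reduce everything to the first equality, namely $\langle W|D^N|V^r\rangle\cdot\tilde\rho_r=\alpha^r\sum_{Z\in S_r(D^N)}\langle W|Z|V\rangle/\langle W|A^r|V\rangle$. Since $\tilde\rho_r=\alpha^r(1-q)^r$, cancelling $\alpha^r$ and clearing the denominator shows this is equivalent to the factorization
\[
(1-q)^r\,\langle W|A^r|V\rangle\,\langle W|D^N|V^r\rangle=\sum_{Z\in S_r(D^N)}\langle W|Z|V\rangle\qquad(\dagger).
\]
The left factor $\langle W|D^N|V^r\rangle=(D^N)_{0,r}$ is, by Theorem \ref{thm:det-Motzkin} applied with $\mathcal C=D$, the generating function for $D$-weighted partial Motzkin paths from $(0,0)$ to $(N,r)$; since for $\mathcal C=D$ the superdiagonal is the constant $\tfrac{1}{1-q}$, the prefactor $(1-q)^r$ is exactly what cancels the minimal weight of the $r$ net up-steps needed to reach height $r$. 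This reformulation is the target I would aim at.

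To prove $(\dagger)$ I would induct on $r$, the number of light particles (equivalently, the number of $A$'s inserted into $D^N$). The base case $r=0$ is immediate, as both sides equal $\langle W|D^N|V\rangle$. For the inductive step the key algebraic input is Corollary \ref{cor:ansatz}, which supplies $DA=qAD+A$ and $AE=qEA+A$ alongside $DE=qED+D+E$; iterating the first of these gives the block relation $D^mA=\sum_{j=0}^m\binom{m}{j}q^j AD^j$, which governs how a single $A$ passes through a run of $D$'s and lets me commute one $A$ toward the boundary vector $|V\rangle$, reducing a word with $r$ copies of $A$ to words with $r-1$. The boundary relations $\langle W|(\alpha E-\gamma D)=\langle W|$ and $(\beta D-\delta E)|V\rangle=|V\rangle$, together with the explicit tridiagonal entries of $D$ and $E$ from Lemma \ref{thm:USW-solution}, are then used to evaluate $A|V\rangle$, to compute the normalization $\langle W|A^r|V\rangle$, and to check that the commuted terms reassemble into the right-hand side of $(\dagger)$. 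An alternative would be to match both sides directly as weighted lattice-path generating functions via the KMLGV Lemma, in the spirit of the proof of Theorem \ref{thm:det-Motzkin}.

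The hard part is that $A=DE-ED$ is \emph{not} tridiagonal: a direct computation from the entries in Lemma \ref{thm:USW-solution} gives $A_{i,i+2}=\tfrac{q^i ac}{1-q}\neq0$, so an $A$-step can change the Motzkin height by $\pm2$, and the naive correspondence ``one light particle $=$ one up-step'' fails. Consequently $(\dagger)$ is genuinely nonformal: it cannot follow from the commutation relations alone, since $A^r|V\rangle$ is not proportional to $|V^r\rangle$ (already $A|V\rangle=A_{0,0}|V\rangle+A_{1,0}|1\rangle+A_{2,0}|2\rangle$ with $A_{0,0}\neq0$). Indeed, even the case $N=2$, $r=1$ collapses to a nontrivial scalar identity among matrix entries, of the shape $(1+q)A_{0,1}D_{1,0}=A_{0,0}(D_{1,1}-qD_{0,0}-1)$, which must be extracted from the relations read entrywise (or from the explicit USW values). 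The crux, therefore, is to control these second-neighbor contributions of $A$ and the normalization $\langle W|A^r|V\rangle$ uniformly in $N$ and $r$, and to show that summing over all $\binom{N}{r}$ placements of the $A$'s inside $D^N$ reorganizes precisely into the clean product on the left of $(\dagger)$.
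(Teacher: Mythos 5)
Your reduction of the theorem to the identity $(\dagger)$ is correct, and your diagnosis of the obstacles is accurate: $A$ is indeed pentadiagonal in this representation, and $A|V\rangle$ is not proportional to $|V^1\rangle$. But the proposal stops exactly where the proof has to begin: you name ``controlling the second-neighbor contributions of $A$ and the normalization $\langle W|A^r|V\rangle$'' as the crux and offer no mechanism for doing it. The induction on $r$ as described does not get off the ground. The relation $D^mA=\sum_j\binom{m}{j}q^jAD^j$ (equivalently $D^mA=A(\indicator+qD)^m$) moves an $A$ toward $\langle W|$, not toward $|V\rangle$ as you wrote, and in either direction it merely reorders the word: it never decreases the number of $A$'s, so it cannot by itself reduce the case of $r$ light particles to the case of $r-1$. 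One is left facing precisely the evaluation problem you flagged as unresolved.

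The paper's proof supplies the two ideas your outline is missing. First, it replaces $D$ by $\dd=(1-q)D-\indicator$ (and $E$ by $\ee$), for which the commutation is exact, $\dd A=qA\dd$, with no additive remainder; consequently the entire sum over the $\binom{N}{r}$ placements of the $A$'s collapses in one stroke,
\begin{equation*}
[y^r]\,\langle W|(\dd+yA)^N|V\rangle
=\left[\begin{array}{c} N\\ r\end{array}\right]_q \langle W|A^r\dd^{N-r}|V\rangle ,
\end{equation*}
and the binomial theorem transports the statement back and forth between $D$ and $\dd$ (this is the content of Theorem \ref{thm:finalcheck2} and the lemma following it). Second, rather than evaluating $A^r|V\rangle$ or $\langle W|A^r|V\rangle$ directly (which is hopeless, for the reasons you give), the paper pins down both sides by recurrences in which the normalization is built in: the ratio $\langle W|A^r\dd^m|V\rangle/\langle W|A^r|V\rangle$ satisfies a three-term recurrence in $m$ derived purely from the relations $\dd|V\rangle=(b+d)|V\rangle-bd\,\ee|V\rangle$, $\langle W|\ee=(a+c)\langle W|-ac\langle W|\dd$, $\dd^k\ee=q^k\ee\dd^k+(1-q^k)\dd^{k-1}$, and $A\ee=q\ee A$ (Proposition \ref{prop:Ak-recurrence}), so $\langle W|A^r|V\rangle$ never needs to be computed; while $\langle W|\dd^N|V^r\rangle$ satisfies a recurrence coming from the explicit tridiagonal entries (Lemma \ref{lem:border}, Theorem \ref{thm:dd-recurrence}). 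Both are then matched against one explicit polynomial $F_m(bdq^r)$ (Theorem \ref{th:F}), and this matching --- Propositions \ref{prop:recurrences} and \ref{prop:F2} together with the coefficient identities of Lemmas \ref{lem:eq1} and \ref{lem:eq2} --- constitutes the bulk of the argument. Without these two steps (the $\dd$-linearization and the recurrence/explicit-formula matching), your plan is a correct restatement of the problem rather than a proof of it.
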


Section \ref{sec:thm} will be devoted to the proof of 
Theorem \ref{thm:finalcheck}.  Note that once we have proved
Theorem \ref{thm:finalcheck}, 
this theorem together with  Theorem \ref{prop:suffices} 
will imply 
Theorem \ref{thm:refinement}.  And then
Theorem \ref{thm:refinement} and 
Lemma \ref{lem:implies} will imply 
that Theorem \ref{thm:main2} holds.

\section{The proof of Theorem \ref{thm:finalcheck}.}\label{sec:thm}

In this section we will prove Theorem \ref{thm:finalcheck}.  
We will primarily work with the matrices
$\dd$ and $\ee$ defined in Section \ref{sec:sol}, so we begin
by writing down the relations satisfied by these matrices.

\begin{lemma}\label{lem:relations}
Let $\dd$ and $\ee$ be defined as in Section \ref{sec:sol};
let $D$, $E$, $\langle W|$, and $|V\rangle$ be defined 
as in Lemma \ref{thm:USW-solution}; let $A = DE-ED$; 
and let $|V^r\rangle = (0,\dots,0,1,0,0,\dots)^T$, where the $1$
is in the $r$th position, so that $|V^0\rangle = |V\rangle$.
Then we have the following relations.
\begin{align*}
\dd A &= qA\dd\\
A \ee &= q \ee A\\
\dd \ee &= q \ee \dd + (1-q)1 \\
\dd^k \ee &= q^k \ee \dd^k +(1-q^k) \dd^{k-1} \\
\dd |V \rangle &= (b+d) |V\rangle -bd \ee |V\rangle\\
\langle W|\ee &= (a+c) \langle W| -ac \langle W| \dd\\
\dd |V^k \rangle &= d_{k-1}^{\sharp} |V^{k-1}\rangle + 
   d_k^{\natural} | V^k \rangle + d_k^{\flat} |V^{k+1}\rangle.
\end{align*}
\end{lemma}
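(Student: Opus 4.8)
The plan is to derive all seven relations from facts already established about $D$, $E$, $A$, $\langle W|$, and $|V\rangle$, rather than from the explicit matrix entries (with the sole exception of the last relation). The crucial translation is the substitution $D = \frac{1}{1-q}(\indicator+\dd)$ and $E = \frac{1}{1-q}(\indicator+\ee)$ from Lemma \ref{thm:USW-solution}, which immediately yields $A = DE - ED = \frac{1}{(1-q)^2}(\dd\ee - \ee\dd)$. I would organize the relations into four groups: the three ``bulk'' relations for $\dd,\ee,A$; the power relation, obtained by induction; the two boundary relations; and the tridiagonal-action relation, which is immediate.

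For the bulk relations I would feed each of the three relations from Corollary \ref{cor:ansatz} through the substitution. From $DE - qED = D+E$, clearing the factor $\frac{1}{(1-q)^2}$ collapses the constant and linear terms and leaves exactly $\dd\ee - q\ee\dd = (1-q)\indicator$, the third relation. From $DA = qAD+A$, writing $D = \frac{1}{1-q}(\indicator+\dd)$ and cancelling the $A$ terms against the $+A$ on the right isolates $\dd A = qA\dd$; the symmetric computation with $AE = qEA+A$ gives $A\ee = q\ee A$. The power relation $\dd^k\ee = q^k\ee\dd^k + (1-q^k)\dd^{k-1}$ then follows by induction on $k$ with base case the third relation: multiplying the inductive hypothesis on the left by $\dd$ and applying $\dd\ee = q\ee\dd+(1-q)\indicator$ once more produces the coefficient $q^k(1-q)+(1-q^k) = 1-q^{k+1}$.

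The two boundary relations come from the boundary conditions of the Matrix Ansatz (Theorem \ref{ansatz}), which hold by Lemma \ref{thm:USW-solution}. Applying the substitution to $\langle W|(\alpha E - \gamma D) = \langle W|$ yields $\alpha\langle W|\ee = (1-q-\alpha+\gamma)\langle W| + \gamma\langle W|\dd$; dividing by $\alpha$ and invoking the change of variables \eqref{subs1}--\eqref{subs2} gives $\gamma/\alpha = -ac$ and $(1-q-\alpha+\gamma)/\alpha = a+c$ (using $\frac{1-q}{\alpha} = 1+ac+a+c$), which is precisely $\langle W|\ee = (a+c)\langle W| - ac\langle W|\dd$. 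The identical computation applied to $(\beta D - \delta E)|V\rangle = |V\rangle$, with $\delta/\beta = -bd$ and $(1-q-\beta+\delta)/\beta = b+d$, gives $\dd|V\rangle = (b+d)|V\rangle - bd\,\ee|V\rangle$. Finally, the last relation is simply the statement that $\dd|V^k\rangle$ reads off the $k$th column of the tridiagonal matrix $\dd$, whose only nonzero entries are $d_{k-1}^\sharp$, $d_k^\natural$, and $d_k^\flat$. The only step requiring genuine care is the simplification of the boundary coefficients through the change of variables; everything else is bookkeeping of the $\frac{1}{1-q}$ factors.
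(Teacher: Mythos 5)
Your proposal is correct and follows exactly the route the paper intends: its proof is the one-line remark that ``most relations follow immediately from the corresponding relations that $D$ and $E$ satisfy,'' and you have simply carried out that exercise in full, deriving the bulk relations from Corollary \ref{cor:ansatz} via $D = \frac{1}{1-q}(\indicator+\dd)$, $E = \frac{1}{1-q}(\indicator+\ee)$, the boundary relations from the Ansatz boundary conditions together with \eqref{subs1}--\eqref{subs2}, and the last relation by reading off a column of the tridiagonal matrix. All of your intermediate computations (e.g.\ $A = \frac{1}{(1-q)^2}(\dd\ee-\ee\dd)$, the induction coefficient $q^k(1-q)+(1-q^k)=1-q^{k+1}$, and the ratios $\gamma/\alpha=-ac$, $(1-q-\alpha+\gamma)/\alpha=a+c$) check out.
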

\begin{proof}
This is a straightforward exercise; most relations follow immediately
from the corresponding relations that $D$ and $E$ satisfy.
\end{proof}

Let $[k]:= 1+q+\dots + q^{k-1}$ be the $q$-analogue of the 
number $k$, and let 
$ \left[\begin{array}{c} N\\ r\end{array}\right]_q 
= \frac{[N]_q !}{[r]_q! [N-r]_q!}.$

\begin{theorem}\label{thm:finalcheck2}
We have that 
\begin{equation}\label{eq:finalcheck2}
\langle W|\dd^N | V^r \rangle = [y^r] \frac{\langle W| (\dd+yA)^N |V \rangle}{\langle W|A^r|V \rangle} = 
\left[\begin{array}{c} N\\ r\end{array}\right]_q 
\frac{\langle W|A^r \dd^{N-r}|V \rangle}{\langle W|A^r |V\rangle}.
\end{equation}
\end{theorem}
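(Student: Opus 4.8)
The plan is to prove the two equalities in \eqref{eq:finalcheck2} separately, working entirely with the relations collected in Lemma \ref{lem:relations}. The first equality, $\langle W|\dd^N|V^r\rangle = [y^r]\frac{\langle W|(\dd+yA)^N|V\rangle}{\langle W|A^r|V\rangle}$, is essentially bookkeeping: expanding $(\dd+yA)^N$ and extracting the coefficient of $y^r$ gives a sum over all words in $\{\dd,A\}^N$ containing exactly $r$ copies of $A$. So I would first show that each such word $\langle W|Z|V\rangle$, after using the commutation relations $\dd A = qA\dd$ to push all the $A$'s to the left, collapses in a controlled way; the real content lives in the second equality. Thus I would concentrate on proving
\begin{equation*}
\langle W|\dd^N|V^r\rangle = \left[\begin{array}{c} N\\ r\end{array}\right]_q \frac{\langle W|A^r \dd^{N-r}|V\rangle}{\langle W|A^r|V\rangle}.
\end{equation*}

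The key tool is the relation $\dd^k \ee = q^k \ee \dd^k + (1-q^k)\dd^{k-1}$, together with $\dd A = qA\dd$ and the definition $A = \dd\ee - \ee\dd$ (equivalently $A$ as realized by $D,E$). First I would establish a recursion in which the ``number of $A$'s'' is created by commuting $\dd$'s past $\ee$'s. The cleanest route is to induct on $N$. Writing $\dd^N = \dd \cdot \dd^{N-1}$ and applying the induction hypothesis to $\dd^{N-1}$, I would need to understand how $\langle W|\dd \cdot (\text{stuff})|V^r\rangle$ relates to lower cases; here the relation $\dd|V^k\rangle = d_{k-1}^\sharp|V^{k-1}\rangle + d_k^\natural|V^k\rangle + d_k^\flat|V^{k+1}\rangle$ lets me track how $\dd$ acts on the right on the vectors $|V^k\rangle$. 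An alternative, perhaps cleaner, route is to prove directly that $\langle W|(\dd+yA)^N|V\rangle = \sum_{r} y^r \left[\begin{array}{c} N\\ r\end{array}\right]_q \langle W|A^r\dd^{N-r}|V\rangle$ by induction on $N$: using $A\dd = q^{-1}\dd A$ and the $q$-binomial recurrence $\left[\begin{array}{c} N\\ r\end{array}\right]_q = \left[\begin{array}{c} N-1\\ r\end{array}\right]_q + q^{N-r}\left[\begin{array}{c} N-1\\ r-1\end{array}\right]_q$, multiplying the inductive expression by $(\dd+yA)$ on the right and using $\dd A = qA\dd$ to normally order, the powers of $q$ should assemble exactly into the $q$-binomial coefficients. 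This is the standard ``$q$-commuting variables obey the $q$-binomial theorem'' phenomenon, specialized to $\dd$ and $A$ with $\dd A = qA\dd$.

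The main obstacle I anticipate is the boundary interaction: the operator identity $(\dd+yA)^N = \sum_r y^r\left[\begin{array}{c} N\\ r\end{array}\right]_q A^r\dd^{N-r}$ (which follows formally from $\dd A = qA\dd$ and the noncommutative $q$-binomial theorem) is clean, but I must verify that taking matrix elements $\langle W|\,\cdot\,|V\rangle$ does not introduce extra terms. Since $\dd A = qA\dd$ holds as a genuine matrix identity (Lemma \ref{lem:relations}), the operator identity holds verbatim, and sandwiching between $\langle W|$ and $|V\rangle$ is automatic; this is reassuring. The subtler point is matching this with $\langle W|\dd^N|V^r\rangle$: I must confirm that $\langle W|A^r\dd^{N-r}|V\rangle = \langle W|\dd^{N-r}|V^r\rangle \cdot (\text{normalization})$, i.e. that $\langle W|A^r$ is proportional to $\langle W|$ shifted appropriately, or equivalently that the first equality in \eqref{eq:finalcheck2} reduces to the second after dividing by $\langle W|A^r|V\rangle$. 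Concretely I expect $\langle W|A^r \dd^{N-r}|V\rangle = \langle W|A^r|V\rangle\cdot\langle W|\dd^{N-r}\cdots$—no, rather I will need the structural fact that $A$ raises the ``level'' so that $\langle W|A^r$ lands in a one-dimensional space detected by $|V^r\rangle$; checking this precisely, using the tridiagonal form of $\dd$ and $\ee$ and the explicit matrix entries, is where the real verification lies. I would handle it by computing the action of $A = \dd\ee-\ee\dd$ on the standard basis vectors and confirming it is (a scalar multiple of) the shift, thereby identifying $\langle W|A^r$ with a multiple of $\langle r|$ and closing the argument.
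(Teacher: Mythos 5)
Your normal-ordering argument via $\dd A = qA\dd$ and the $q$-binomial theorem is sound, but it only proves the \emph{second} equality in \eqref{eq:finalcheck2} (between the middle expression and the right-hand one); this is exactly how the paper handles that equality, in a single sentence. The genuinely hard statement is the equality with $\langle W|\dd^N|V^r\rangle$, and your plan for it rests on a false structural claim: that $A$ acts as (a scalar multiple of) the shift, so that $\langle W|A^r$ is a multiple of $\langle r|$. In this representation $A = DE-ED = \tfrac{1}{(1-q)^2}(\dd\ee-\ee\dd) = \tfrac{1-\ee\dd}{1-q}$ (using $\dd\ee = q\ee\dd+(1-q)\mathbbm{1}$), which is pentadiagonal, not a shift; concretely, $\langle W|A|V^2\rangle = -\tfrac{e_0^\sharp d_1^\sharp}{1-q} = \tfrac{ac}{1-q} \neq 0$ whenever $\gamma\neq 0$. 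Worse, the claim is impossible on abstract grounds: if $\langle W|A^r$ were proportional to $\langle r|$, then $\langle W|A^r|V\rangle = \const\cdot\langle r|V\rangle$ would vanish for $r\geq 1$, so the ratios in the theorem would be undefined. Relatedly, the theorem itself forces $\langle W|A^r\dd^m|V\rangle$ to equal $\langle W|\dd^{m+r}|V^r\rangle$ times a factor $\left[\begin{array}{c} m+r\\ r\end{array}\right]_q^{-1}\langle W|A^r|V\rangle$ that depends on $m$ and not only on $r$; no identification of $\langle W|A^r$ with a fixed covector can produce such a relation.

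What is actually needed (and what the paper does) is quantitative, not structural: derive a three-term recurrence in $m$ for $\langle W|A^r\dd^m|V\rangle$ (Proposition \ref{prop:Ak-recurrence}, using $\dd^k\ee = q^k\ee\dd^k+(1-q^k)\dd^{k-1}$, $\dd|V\rangle = (b+d)|V\rangle - bd\,\ee|V\rangle$, $\langle W|\ee = (a+c)\langle W|-ac\langle W|\dd$, and $\dd A = qA\dd$); derive a separate recurrence for $\langle W|\dd^N|V^r\rangle$ from the tridiagonal action $\dd|V^k\rangle = d_{k-1}^\sharp|V^{k-1}\rangle+d_k^\natural|V^k\rangle+d_k^\flat|V^{k+1}\rangle$ (Lemma \ref{lem:border}, Theorem \ref{thm:dd-recurrence}); and then show both quantities equal one explicit closed form, namely $\left[\begin{array}{c} m+r\\ r\end{array}\right]_q F_m(bdq^r)/\prod_{i=0}^{m-1}(1-abcdq^{2r+i})$ with $F_m$ given by Theorem \ref{th:F}, the comparison of recurrences reducing to the $q$-binomial identities of Lemmas \ref{lem:eq1} and \ref{lem:eq2}. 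Your first inductive idea (peeling off $\dd$ against $|V^k\rangle$) is the correct starting point for the second of these recurrences, but it does not close on its own: the recurrence mixes levels $r-1$, $r$, $r+1$ and an $\ee$, and without the closed-form comparison (or an equivalent argument) the induction has nothing to match against.
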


\begin{lemma}
Theorem \ref{thm:finalcheck2} implies 
Theorem \ref{thm:finalcheck}.  
\end{lemma}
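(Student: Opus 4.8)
The plan is to deduce Theorem~\ref{thm:finalcheck} from Theorem~\ref{thm:finalcheck2} by a change of variables, exploiting two facts: that $D = \frac{1}{1-q}(\indicator + \dd)$ by Lemma~\ref{thm:USW-solution}, and that the matrix $A = DE-ED$ occurring in both statements is literally the \emph{same} matrix (Lemma~\ref{lem:relations}). So no conversion of $A$ is needed; only $D$ must be rewritten in terms of $\dd$. First I would set $B := \indicator + \dd = (1-q)D$ and substitute into the identity of Theorem~\ref{thm:finalcheck}. The left-hand side becomes $\tilde\rho_r\,\langle W|D^N|V^r\rangle = \frac{\alpha^r(1-q)^r}{(1-q)^N}\langle W|B^N|V^r\rangle$. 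For the right-hand side I would write $D + y\alpha A = \frac{1}{1-q}\big(B + (1-q)\alpha y\, A\big)$, so that extracting $[y^r]$ and pulling the scalar $(1-q)\alpha$ out of the formal variable turns it into $\frac{\alpha^r(1-q)^r}{(1-q)^N}\cdot\frac{[z^r]\langle W|(B+zA)^N|V\rangle}{\langle W|A^r|V\rangle}$, where $z$ is a fresh formal variable. After cancelling the common prefactor $\frac{\alpha^r(1-q)^r}{(1-q)^N}$, Theorem~\ref{thm:finalcheck} is reduced to the cleaner identity
\begin{equation*}
\langle W|B^N|V^r\rangle = \frac{[z^r]\langle W|(B+zA)^N|V\rangle}{\langle W|A^r|V\rangle}. \tag{$\star$}
\end{equation*}

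Next I would prove $(\star)$, which is exactly the first equality of Theorem~\ref{thm:finalcheck2} but with $\dd$ replaced by $B = \indicator + \dd$. The crucial observation is that $\indicator$ is central, hence commutes with $\dd + zA$, so the ordinary binomial theorem applies: $(B+zA)^N = (\indicator + (\dd+zA))^N = \sum_{k=0}^N \binom{N}{k}(\dd+zA)^k$. Applying $\langle W|\cdot|V\rangle$ and extracting $[z^r]$ termwise, and then invoking the first equality of Theorem~\ref{thm:finalcheck2} with $N$ replaced by $k$ (in the form $[z^r]\langle W|(\dd+zA)^k|V\rangle = \langle W|A^r|V\rangle\,\langle W|\dd^k|V^r\rangle$), I obtain that the numerator of the right-hand side of $(\star)$ equals $\langle W|A^r|V\rangle\sum_k \binom{N}{k}\langle W|\dd^k|V^r\rangle$. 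Using centrality of $\indicator$ once more to expand $(\indicator+\dd)^N = \sum_k \binom{N}{k}\dd^k$, this sum is precisely $\langle W|A^r|V\rangle\,\langle W|B^N|V^r\rangle$, and dividing by $\langle W|A^r|V\rangle$ gives $(\star)$. (The terms with $k<r$ vanish on both sides, so the sums effectively run from $k=r$ to $N$, consistent with the $q$-binomial form of Theorem~\ref{thm:finalcheck2}.)

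This argument is largely bookkeeping, and the only point requiring care is the rescaling of the formal variable: passing from $[y^r]$ of a polynomial in $(1-q)\alpha y$ to $[z^r]$ produces exactly the factor $\big((1-q)\alpha\big)^r$ needed to reconcile $\tilde\rho_r = \alpha^r(1-q)^r$ with the $(1-q)^{-N}$ coming from $D^N$. The genuine conceptual content — and the reason the lemma is easy rather than a repeat of Section~\ref{sec:thm} — is that shifting $\dd$ by the central matrix $\indicator$ is harmless for the degree-$r$ extraction in $A$: because $\indicator$ commutes with both $\dd$ and $A$, both sides of the $\dd$-identity lift termwise to the $B$-identity through the ordinary binomial theorem, so one never has to confront the inhomogeneous relation $DA = qAD + A$ satisfied by the capital $D$, which is precisely what made it preferable to carry out the hard work of Section~\ref{sec:thm} with the homogeneous relation $\dd A = qA\dd$ instead.
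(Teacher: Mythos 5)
Your proposal is correct and follows essentially the same route as the paper: both arguments rewrite $D$ in terms of $\dd=(1-q)D-\indicator$, use the ordinary binomial theorem (valid because $\indicator$ is central) to expand and recombine powers, apply Theorem \ref{thm:finalcheck2} termwise to each exponent $k\le N$, and finish by rescaling the formal variable to absorb the factors $\alpha^r(1-q)^{r-N}$. The only difference is organizational — you cancel the scalar prefactors up front to isolate the clean identity $(\star)$ and then prove it, whereas the paper runs the same expansions in a single chain of equalities from the left-hand side to the right-hand side.
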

\begin{proof}
Suppose that Theorem \ref{thm:finalcheck2} is true.   Then
\begin{align*}
\langle W|D^N | V^r \rangle \cdot \tilde \rho_r &= 
  \langle W| \frac{(1+\dd)^N}{(1-q)^N} | V^r \rangle \cdot 
 \alpha^r (1-q)^r \\
&= \alpha^r (1-q)^{r-N} \sum_{i=0}^N {N \choose i} \langle W | \dd^i | V^r \rangle\\
&= \alpha^r (1-q)^{r-N} \sum_{i=0}^N {N \choose i} 
[y^r] \frac{\langle W| (\dd+yA)^i |V \rangle}{\langle W |A^r|V\rangle}\\
& = \frac{\alpha^r (1-q)^{r-N}}{\langle W|A^r|V\rangle}
\sum_{i=0}^N {N \choose i} [y^r] 
\langle W|[(1-q)D-1+yA]^i|V\rangle\\
& = \frac{\alpha^r (1-q)^{r-N}}{\langle W|A^r|V\rangle}[y^r]
\langle W| \sum_{i=0}^N {N \choose i}  
[(1-q)D-1+yA]^i|V\rangle\\
& = \frac{\alpha^r (1-q)^{r-N}}{\langle W|A^r|V\rangle}[y^r]
\langle W|   
[(1-q)D+yA]^N|V\rangle\\
& = \frac{\alpha^r}{\langle W|A^r|V\rangle}[y^r]
\langle W|   
(D+yA)^N|V\rangle\\
& = [y^r]
\frac{(D+y\alpha A)^N|V\rangle}
{\langle W|A^r|V\rangle}.
\end{align*}
\end{proof}

To prove Theorem \ref{thm:finalcheck2}, we will 
find an explicit formula for the quantity in \eqref{eq:finalcheck2}
(see Corollary \ref{cor:1}),
and use recurrences to show that both 
$\langle W|\dd^N | V^r \rangle$ and 
$\left[\begin{array}{c} N\\ r\end{array}\right]_q 
\frac{\langle W|A^r \dd^{N-r}|V \rangle}{\langle W|A^r |V\rangle}$ are equal to this
explicit formula.
Note that the second equality in Theorem \ref{thm:finalcheck2}
follows by repeated application of the relation $\dd A = q A \dd$.

\subsection{A formula for 
${\langle W|A^r \dd^{N-r}|V \rangle}$.}

First we will prove a recurrence for 
${\langle W|A^r \dd^{m}|V \rangle}$ (where we are thinking of 
$m=N-r$).

\begin{proposition}\label{prop:Ak-recurrence}
We have that 
$$
\langle W|A^r \dd^m|V \rangle=
\frac{b+d-bd(a+c)q^{m+r-1}}{1-abcdq^{m+2r-1}} \langle W|A^r \dd^{m-1} |V \rangle
+\frac{bd(q^{m-1}-1)}{1-abcdq^{m+2r-1}} \langle W|A^r \dd^{m-2} |V \rangle.
$$
\end{proposition}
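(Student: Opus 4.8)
The plan is to reduce the power of $\dd$ acting on $|V\rangle$ by one using the boundary relation $\dd|V\rangle = (b+d)|V\rangle - bd\,\ee|V\rangle$ from Lemma \ref{lem:relations}, and then to shuttle the resulting $\ee$ and the block $A^r$ past each other until everything is re-expressed in terms of the quantities $\langle W|A^r\dd^{\bullet}|V\rangle$. Writing $X_m := \langle W|A^r\dd^m|V\rangle$ for brevity (with $r$ fixed), I would first split
\begin{equation*}
\dd^m|V\rangle = (b+d)\dd^{m-1}|V\rangle - bd\,\dd^{m-1}\ee|V\rangle,
\end{equation*}
and then invoke the relation $\dd^{k}\ee = q^{k}\ee\dd^{k} + (1-q^{k})\dd^{k-1}$ with $k=m-1$ to obtain
\begin{equation*}
\dd^m|V\rangle = (b+d)\dd^{m-1}|V\rangle - bd\,q^{m-1}\ee\dd^{m-1}|V\rangle - bd(1-q^{m-1})\dd^{m-2}|V\rangle.
\end{equation*}

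Next I would pass $A^r$ through the $\ee$ and then off the left boundary. Iterating $A\ee = q\ee A$ gives $A^r\ee = q^r\ee A^r$, so the middle term contributes $q^{m-1+r}\langle W|\ee A^r\dd^{m-1}|V\rangle$. I then use the left boundary relation $\langle W|\ee = (a+c)\langle W| - ac\langle W|\dd$ together with $\dd A^r = q^r A^r\dd$ (from iterating $\dd A = qA\dd$) to rewrite
\begin{equation*}
\langle W|\ee A^r\dd^{m-1}|V\rangle = (a+c)X_{m-1} - ac\,q^r X_m.
\end{equation*}
Substituting back, the crucial point is that the term carrying $X_m$ acquires the coefficient $abcd\,q^{m-1}\cdot q^r\cdot q^r = abcd\,q^{m+2r-1}$, which is exactly the factor appearing in the denominator of the claimed recurrence.

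Collecting terms then gives
\begin{equation*}
(1 - abcd\,q^{m+2r-1})X_m = \bigl[(b+d) - bd(a+c)q^{m+r-1}\bigr]X_{m-1} - bd(1-q^{m-1})X_{m-2},
\end{equation*}
and dividing by $1 - abcd\,q^{m+2r-1}$ (and writing $-bd(1-q^{m-1}) = bd(q^{m-1}-1)$) yields precisely the stated formula. The only genuine obstacle is the bookkeeping of the $q$-powers: one must check that each pass of $A^r$ across $\ee$ and across $\dd$ produces a clean factor $q^r$, and that these combine with the $q^{m-1}$ coming from the $\dd^{m-1}\ee$ commutation to give the exponent $m+2r-1$ in the denominator and $m+r-1$ in the first numerator, rather than something off by $r$. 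Everything else is a direct application of the relations recorded in Lemma \ref{lem:relations}.
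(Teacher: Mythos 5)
Your proposal is correct and follows essentially the same route as the paper's proof: apply the right boundary relation $\dd|V\rangle = (b+d)|V\rangle - bd\,\ee|V\rangle$, commute $\ee$ leftward via $\dd^{m-1}\ee = q^{m-1}\ee\dd^{m-1} + (1-q^{m-1})\dd^{m-2}$ and $A^r\ee = q^r\ee A^r$, eliminate $\ee$ with the left boundary relation $\langle W|\ee = (a+c)\langle W| - ac\langle W|\dd$, and push the resulting $\dd$ back through $A^r$ to recover $abcd\,q^{m+2r-1}\langle W|A^r\dd^m|V\rangle$ before collecting terms. Your $q$-power bookkeeping ($q^{m-1}\cdot q^r \cdot q^r$ for the $X_m$ term and $q^{m-1}\cdot q^r$ for the $X_{m-1}$ term) matches the paper exactly.
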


\begin{proof}
In what follows, we will use the relations from Lemma \ref{lem:relations}.
Note that 
\begin{align*}
\langle W|A^r \dd^m|V\rangle &=
 \langle W| A^r \dd^{m-1} \left((b+d)|V\rangle-bd \ee|V\rangle\right)\\
&= (b+d)\langle W|A^r \dd^{m-1}|V\rangle - bd\langle W|A^r \dd^{m-1} \ee|V\rangle\\
&= (b+d)\langle W|A^r \dd^{m-1}|V\rangle - bd \left(q^{m-1} \langle W|A^r \ee \dd^{m-1}|V \rangle -(q^{m-1}-1) \langle W|A^r \dd^{m-2} |V \rangle \right)\\
&= (b+d)\langle W|A^r \dd^{m-1}|V\rangle - bd q^{m-1} \langle W|A^r \ee \dd^{m-1}|V \rangle +bd (q^{m-1}-1) \langle W|A^r \dd^{m-2} |V \rangle \\
&= (b+d)\langle W|A^r \dd^{m-1}|V\rangle - bd q^{m+r-1} \langle W|\ee A^r \dd^{m-1}|V \rangle +bd (q^{m-1}-1) \langle W|A^r \dd^{m-2} |V \rangle \\
&= (b+d)\langle W|A^r \dd^{m-1}|V\rangle - bd q^{m+r-1} \left[
(a+c) \langle W| A^r \dd^{m-1}|V \rangle 
-ac \langle W|\dd A^r \dd^{m-1}|V\rangle \right] \\
& \qquad +bd (q^{m-1}-1) \langle W|A^r \dd^{m-2} |V \rangle \\
&= (b+d)\langle W|A^r \dd^{m-1}|V\rangle - bd (a+c) q^{m+r-1} 
\langle W| A^r \dd^{m-1}|V \rangle +abcd q^{m+2r-1} \langle W|A^r \dd^m|V \rangle \\
&\qquad +bd (q^{m-1}-1) \langle W|A^r \dd^{m-2} |V \rangle. 
\end{align*}
\end{proof}

Our next goal is to get an explicit formula for 
$\frac{\langle W|A^r \dd^{m}|V \rangle}{\langle W|A^r |V\rangle}$
(see Corollary \ref{cor:1}).
Towards this end, we make the following definition.
\begin{definition}\label{def:F}
Define $F_m(y)$ by  
$F_0(y)=1$, $F_m(y)=0$ if $m<0$, and 
\begin{equation}
F_m(y)=(b+d- y(a+c) q^{m-1})F_{m-1}(y))+(q^{m-1}-1)(bd-acq^{m-2}y^2)F_{m-2}(y)
\end{equation}
for $m>0$.
\end{definition}

\begin{corollary}\label{cor:Ak}
We have that 
\begin{equation*}
\frac{\langle W|A^r \dd^m |V \rangle}{\langle W| A^r |V \rangle} = 
\frac{F_m(bd q^r)}{\prod_{i=0}^{m-1} (1-abcd q^{2r+i})}.
\end{equation*}
\end{corollary}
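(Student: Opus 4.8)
The plan is to prove Corollary~\ref{cor:Ak} by induction on $m$, comparing the three-term recurrence for $\langle W|A^r \dd^m|V\rangle$ established in Proposition~\ref{prop:Ak-recurrence} against the recurrence defining $F_m$ in Definition~\ref{def:F}. Write $G_m = \frac{\langle W|A^r \dd^m|V\rangle}{\langle W|A^r|V\rangle}$ and abbreviate $y = bd\,q^r$; the claim is that $G_m = F_m(y)\big/\prod_{i=0}^{m-1}(1-abcd\,q^{2r+i})$. For the base cases, note $G_0 = 1 = F_0(y)$ with the empty product equal to $1$. At $m=1$ the second term of each recurrence vanishes, since its coefficient contains the factor $q^{m-1}-1 = q^0 - 1 = 0$, and both sides reduce to $(b+d-bd(a+c)q^r)\big/(1-abcd\,q^{2r})$, using $F_1(y) = b+d - y(a+c)$ and $y = bd\,q^r$.

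For the inductive step, substitute the inductive hypotheses for $G_{m-1}$ and $G_{m-2}$ into the recurrence of Proposition~\ref{prop:Ak-recurrence} and multiply through by the product $\prod_{i=0}^{m-1}(1-abcd\,q^{2r+i})$. The crucial bookkeeping is that the denominator $1-abcd\,q^{m+2r-1}$ appearing in Proposition~\ref{prop:Ak-recurrence} is precisely the top factor $1-abcd\,q^{2r+(m-1)}$ of this product, so it cancels; after telescoping the remaining factors, the coefficient of $F_{m-1}(y)$ becomes $b+d-bd(a+c)q^{m+r-1}$ and the coefficient of $F_{m-2}(y)$ becomes $bd(q^{m-1}-1)(1-abcd\,q^{2r+m-2})$. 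It then remains to check that the defining recurrence for $F_m$ produces exactly these same coefficients once we set $y = bd\,q^r$.

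This last check is where the specific form of Definition~\ref{def:F} does its work. In $F_m(y) = (b+d-y(a+c)q^{m-1})F_{m-1}(y) + (q^{m-1}-1)(bd-ac\,q^{m-2}y^2)F_{m-2}(y)$, substituting $y = bd\,q^r$ gives $y(a+c)q^{m-1} = bd(a+c)q^{m+r-1}$ for the $F_{m-1}$ coefficient, matching immediately. For the $F_{m-2}$ coefficient, the key identity is $bd - ac\,q^{m-2}y^2 = bd - ab^2cd^2\,q^{m+2r-2} = bd\,(1-abcd\,q^{m+2r-2})$, so that $(q^{m-1}-1)(bd-ac\,q^{m-2}y^2) = bd(q^{m-1}-1)(1-abcd\,q^{2r+m-2})$, exactly the coefficient obtained above. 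Hence the two recurrences agree, and the induction closes.

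I do not anticipate a genuine obstacle here: the function $F_m$ has evidently been designed precisely so that the substitution $y=bd\,q^r$ converts its recurrence into the normalized form of Proposition~\ref{prop:Ak-recurrence}. The only places demanding care are the denominator bookkeeping—verifying that $\prod_{i=0}^{m-1}(1-abcd\,q^{2r+i})$ telescopes correctly against the factors introduced at each step—and the small algebraic identity $bd-ac\,q^{m-2}(bd\,q^r)^2 = bd(1-abcd\,q^{m+2r-2})$ that reconciles the $F_{m-2}$ terms.
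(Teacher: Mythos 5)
Your proof is correct and is essentially the paper's own argument: the paper's proof of Corollary~\ref{cor:Ak} consists precisely of comparing the recurrence of Proposition~\ref{prop:Ak-recurrence} with the defining recurrence of $F_m$ in Definition~\ref{def:F}, together with the base cases. Your write-up simply makes explicit the denominator telescoping and the identity $bd-ac\,q^{m-2}(bd\,q^r)^2 = bd\,(1-abcd\,q^{m+2r-2})$ that the paper leaves to the reader.
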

\begin{proof}
Corollary \ref{cor:Ak} follows from 
Proposition \ref{prop:Ak-recurrence} and Definition \ref{def:F}
by comparing the recurrences and base cases.
\end{proof}

\begin{theorem}\label{th:F}
Set
\begin{equation*}
B_m(b,d)=\sum_{i=0}^m 
\left[\begin{array}{c} m\\ i\end{array}\right]_q 
b^i d^{m-i} \text{ and }
A_m(a,c)=\sum_{i=0}^m \left[\begin{array}{c} m\\ i\end{array}\right]_{1/q} a^i c^{m-i}.
\end{equation*}
Then for $m\ge 0$, we have 
\begin{equation}
F_m(y)=\sum_{i=0}^m(-1)^i \left[\begin{array}{c} m\\ i\end{array}\right]_{q}q^{i\choose 2}y^i A_i(a,c)B_{m-i}(b,d).
\end{equation}
\end{theorem}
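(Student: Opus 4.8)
The plan is to prove the closed form by induction on $m$, checking that the right-hand side---call it $G_m(y)$---satisfies exactly the recurrence and initial data of Definition~\ref{def:F}. Since $A_0=B_0=1$, $A_1(a,c)=a+c$, and $B_1(b,d)=b+d$, one reads off $G_0=1$ and $G_1(y)=(b+d)-y(a+c)$, matching $F_0$ and $F_1$. Because $G_m(y)$ is a polynomial in $y$, verifying
$$G_m(y)=\big(b+d-y(a+c)q^{m-1}\big)G_{m-1}(y)+(q^{m-1}-1)\big(bd-acq^{m-2}y^2\big)G_{m-2}(y)$$
reduces, for each $i$, to a single identity $(\star)$ among coefficients of $y^i$. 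Writing $c_{m,i}=(-1)^i\binom{m}{i}_q q^{\binom{i}{2}}A_i(a,c)B_{m-i}(b,d)$ for the coefficient of $y^i$ in $G_m$ and dividing by $(-1)^i$, the four terms of the recurrence produce
$$\binom{m}{i}_q q^{\binom{i}{2}}A_iB_{m-i}=T_1+T_2+T_3+T_4,$$
where $T_1$ carries $(b+d)\binom{m-1}{i}_q B_{m-1-i}$, $T_2$ carries $(a+c)q^{m-1}\binom{m-1}{i-1}_q q^{\binom{i-1}{2}}A_{i-1}$, $T_3$ carries $(q^{m-1}-1)bd\binom{m-2}{i}_q B_{m-2-i}$, and $T_4$ carries $-(q^{m-1}-1)acq^{m-2}\binom{m-2}{i-2}_q q^{\binom{i-2}{2}}A_{i-2}$.

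The main tools are three-term recurrences for the Rogers--Szeg\H{o}-type polynomials, namely
$$B_m(b,d)=(b+d)B_{m-1}(b,d)-(1-q^{m-1})bd\,B_{m-2}(b,d),$$
together with the analogous relation for $A_m(a,c)$ obtained by replacing $q$ with $q^{-1}$. I would record these first; each follows from the $q$-Pascal rule combined with the absorption identity $(1-q^{n-j})\binom{n}{j}_q=(1-q^{n})\binom{n-1}{j}_q$, which is the elementary fact underlying every cancellation below.

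To prove $(\star)$, I would split the left-hand coefficient using the $q$-Pascal relation in the form $\binom{m}{i}_q=q^{m-i}\binom{m-1}{i-1}_q+\binom{m-1}{i}_q$. In the first piece I expand $A_i$ by its three-term recurrence: its $(a+c)$-part reproduces $T_2$ verbatim---the shift $\binom{i}{2}-\binom{i-1}{2}=i-1$ is exactly what converts the prefactor $q^{m-i}$ into the $q^{m-1}$ the recurrence demands---while its $ac$-part matches $T_4$ after one use of the absorption identity. In the second piece I expand $B_{m-i}$ by its recurrence: the $(b+d)$-part gives $T_1$ directly and the $bd$-part gives $T_3$, again through the absorption identity (here one checks $(1-q^{m-1-i})\binom{m-1}{i}_q=(1-q^{m-1})\binom{m-2}{i}_q$). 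Summing the four reconciled contributions yields $T_1+T_2+T_3+T_4$, establishing $(\star)$ and hence the recurrence for $G_m$, completing the induction.

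The only delicate point---and the step I expect to require the most care---is the bookkeeping of the powers of $q$: one must track how the Gaussian weight $q^{\binom{i}{2}}$ interacts with the index shifts to $q^{\binom{i-1}{2}}$ and $q^{\binom{i-2}{2}}$, and must choose the correct form of $q$-Pascal (the one placing $q^{m-i}$ on $\binom{m-1}{i-1}_q$, not $q^{i}$ on $\binom{m-1}{i}_q$) so that the $y$-linear term, which carries $q^{m-1}$, comes out right. Once the matching of $T_2$ is arranged this way, the other three matchings are forced, and each residual collapses to the single absorption identity above, so no genuinely new calculation is needed beyond careful exponent tracking.
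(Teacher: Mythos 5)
Your proof is correct and follows essentially the same route as the paper's: induction on $m$ via the defining recurrence, using the three-term recurrences for $A_m(a,c)$ and $B_m(b,d)$ (the paper's Lemma \ref{lem:AB}) together with $q$-Pascal and the absorption identity (Lemma \ref{lem:binomial}). The only difference is organizational---you verify the recurrence coefficient-by-coefficient in $y$ after splitting $\left[\begin{array}{c} m\\ i\end{array}\right]_q$ by $q$-Pascal, whereas the paper carries out the equivalent cancellations on the full sums---so the underlying argument is the same.
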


Before proving Theorem \ref{th:F}, we first state a few
useful lemmas.
Lemma \ref{lem:binomial} is a simple exercise (and is well-known).
\begin{lemma}\label{lem:binomial}
\begin{eqnarray}
 \left[\begin{array}{c} m\\ i\end{array}\right]_q&=& \left[\begin{array}{c} m-1\\ i\end{array}\right]_q+q^{m-i} \left[\begin{array}{c} m-1\\ i-1\end{array}\right]_q\\
 \left[\begin{array}{c} m\\ i\end{array}\right]_q&=& q^i \left[\begin{array}{c} m-1\\ i\end{array}\right]_q+ 
\left[\begin{array}{c} m-1\\ i-1\end{array}\right]_q\\
(1-q^m) \left[\begin{array}{c} m-1\\ i\end{array}\right]_q&=& (1-q^{m-i})\left[\begin{array}{c} m\\ i\end{array}\right]_q
\end{eqnarray}
\end{lemma}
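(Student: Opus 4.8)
The plan is to prove all three identities by direct manipulation of the $q$-factorial definition $\left[\begin{array}{c} m\\ i\end{array}\right]_q = \frac{[m]!}{[i]![m-i]!}$, reducing each to a one-line identity among the $q$-integers $[k] = \frac{1-q^{k}}{1-q}$; here $[k]! := [k][k-1]\cdots[1]$.

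First, for the two $q$-Pascal recurrences (the first two displayed equations), I would pull the common factor $\frac{[m-1]!}{[i]![m-i]!}$ out of both terms on the right-hand side. Using $[m-i]! = [m-i]\,[m-i-1]!$ and $[i]! = [i]\,[i-1]!$, one has, for $1 \le i \le m-1$,
\begin{align*}
\left[\begin{array}{c} m-1\\ i\end{array}\right]_q &= \frac{[m-1]!}{[i]![m-i]!}\,[m-i], & \left[\begin{array}{c} m-1\\ i-1\end{array}\right]_q &= \frac{[m-1]!}{[i]![m-i]!}\,[i].
\end{align*}
Hence the right-hand side of the first identity equals $\frac{[m-1]!}{[i]![m-i]!}\bigl([m-i] + q^{m-i}[i]\bigr)$ and that of the second equals $\frac{[m-1]!}{[i]![m-i]!}\bigl(q^{i}[m-i] + [i]\bigr)$. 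The entire proof then reduces to the scalar identities
\begin{align*}
[m-i] + q^{m-i}[i] &= [m], & q^{i}[m-i] + [i] &= [m],
\end{align*}
both of which are immediate from $[k] = \frac{1-q^{k}}{1-q}$; for instance $[m-i] + q^{m-i}[i] = \frac{(1-q^{m-i}) + q^{m-i}(1-q^{i})}{1-q} = \frac{1-q^{m}}{1-q} = [m]$. Multiplying back by $\frac{[m-1]!}{[i]![m-i]!}$ recovers $\left[\begin{array}{c} m\\ i\end{array}\right]_q$ in each case.

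For the third identity I have two routes. Since $1-q^{m} = (1-q)[m]$ and $1-q^{m-i} = (1-q)[m-i]$, it is equivalent to $[m]\left[\begin{array}{c} m-1\\ i\end{array}\right]_q = [m-i]\left[\begin{array}{c} m\\ i\end{array}\right]_q$, which I would check by writing both sides as $q$-factorial ratios and cancelling. More elegantly, the third identity is not independent of the first two: multiplying the second identity by $q^{m-i}$ and subtracting the first eliminates the common term $\left[\begin{array}{c} m-1\\ i-1\end{array}\right]_q$ and yields $(1-q^{m-i})\left[\begin{array}{c} m\\ i\end{array}\right]_q = (1-q^{m})\left[\begin{array}{c} m-1\\ i\end{array}\right]_q$ directly.

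I do not anticipate any real obstacle, as this is a standard fact; the only point needing a word of care is the boundary behaviour. The factored identities above are valid for $1 \le i \le m-1$, so I would dispose of the cases $i=0$ and $i=m$ separately, using the convention that a $q$-binomial coefficient with lower index outside $\{0,\dots,m\}$ is $0$ and that $[0]=0$; in each such case the terms degenerate correctly and both sides of every identity agree by inspection.
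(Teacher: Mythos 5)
Your proof is correct. The paper gives no argument for this lemma at all --- it is dismissed with the remark that it ``is a simple exercise (and is well-known)'' --- so there is no in-paper proof to compare against; your direct computation from the $q$-factorial definition, reducing the two $q$-Pascal rules to the scalar identities $[m-i]+q^{m-i}[i]=[m]$ and $q^{i}[m-i]+[i]=[m]$, is the standard way to fill this gap, and your observation that the third identity follows formally from the first two (multiply the second by $q^{m-i}$ and subtract the first) together with the explicit check of the boundary cases $i=0$ and $i=m$ makes the write-up complete.
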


\begin{lemma}\label{lem:AB}
We have that 
\begin{align*}
(b+d)B_m(b,d)&=B_{m+1}(b,d)+(1-q^m)bd B_{m-1}(b,d), \text{ and }\\
(a+c)A_m(a,c)&=A_{m+1}(a,c)+(1-q^{-m})ac A_{m-1}(a,c).
\end{align*}
\end{lemma}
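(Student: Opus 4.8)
The plan is to recognize $B_m(b,d)$ as a homogenized Rogers--Szeg\H{o} polynomial, so that the first claimed identity is simply its standard three-term recurrence, and to derive it purely from the $q$-binomial identities already recorded in Lemma \ref{lem:binomial}. Since $A_m(a,c)$ is obtained from $B_m(b,d)$ by replacing $q$ with $1/q$ (and $b,d$ by $a,c$), it will suffice to prove the first identity for a generic base: the second then follows by the substitution $q\mapsto 1/q$, which sends $1-q^m$ to $1-q^{-m}$ and, because the binomial identities of Lemma \ref{lem:binomial} hold formally for any base, leaves every step of the argument valid.

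For the $B$-recurrence I would proceed in two steps, a ``dilation'' step followed by a ``difference'' step, and then combine them. For the dilation step, I apply the second identity of Lemma \ref{lem:binomial}, namely $\left[\begin{smallmatrix} m+1\\ i\end{smallmatrix}\right]_q = q^i\left[\begin{smallmatrix} m\\ i\end{smallmatrix}\right]_q + \left[\begin{smallmatrix} m\\ i-1\end{smallmatrix}\right]_q$, to the definition of $B_{m+1}(b,d)$. Splitting into two sums and reindexing the second by $j=i-1$, the factor $q^i$ in the first sum is absorbed by rescaling the first argument from $b$ to $qb$, yielding
$$B_{m+1}(b,d) = d\,B_m(qb,d) + b\,B_m(b,d).$$

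The difference step eliminates the dilated argument $B_m(qb,d)$. Writing $B_m(b,d)-B_m(qb,d)=\sum_i \left[\begin{smallmatrix} m\\ i\end{smallmatrix}\right]_q(1-q^i)\,b^i d^{m-i}$ and using the consequence $\left[\begin{smallmatrix} m\\ i\end{smallmatrix}\right]_q(1-q^i)=(1-q^m)\left[\begin{smallmatrix} m-1\\ i-1\end{smallmatrix}\right]_q$ of the third identity of Lemma \ref{lem:binomial} (together with the symmetry $\left[\begin{smallmatrix} m\\ i\end{smallmatrix}\right]_q=\left[\begin{smallmatrix} m\\ m-i\end{smallmatrix}\right]_q$), a single reindex gives
$$B_m(b,d) - B_m(qb,d) = (1-q^m)\,b\,B_{m-1}(b,d).$$
Substituting $B_m(qb,d)=B_m(b,d)-(1-q^m)b\,B_{m-1}(b,d)$ into the dilation identity and collecting terms then produces exactly $(b+d)B_m(b,d)=B_{m+1}(b,d)+(1-q^m)bd\,B_{m-1}(b,d)$.

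I expect the only genuine subtlety to be the interplay between the two steps: one must notice that the awkward rescaled term $B_m(qb,d)$ forced on us by the $q^i$ weights in the dilation step is precisely the quantity controlled by the factorization of $\left[\begin{smallmatrix} m\\ i\end{smallmatrix}\right]_q(1-q^i)$, so that the two identities dovetail to cancel the $qb$ argument. Everything else—the reindexing shifts, the boundary terms where $\left[\begin{smallmatrix} m\\ i\end{smallmatrix}\right]_q$ vanishes, and the base cases $m=0,1$—is routine bookkeeping with the identities of Lemma \ref{lem:binomial}, and the $A$-recurrence requires no new ideas beyond the formal substitution $q\mapsto 1/q$.
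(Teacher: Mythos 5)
Your proof is correct, and it takes a genuinely different (though equally elementary) route from the paper's. The paper proves the $B$-recurrence in a single direct computation: it expands $(b+d)B_m(b,d)$ into two sums, merges them, and applies the \emph{first} and third identities of Lemma \ref{lem:binomial} to the combined coefficients $\left[\begin{smallmatrix} m\\ i\end{smallmatrix}\right]_q+\left[\begin{smallmatrix} m\\ i-1\end{smallmatrix}\right]_q$ so as to reassemble $B_{m+1}(b,d)+(1-q^m)bd\,B_{m-1}(b,d)$ directly; the dilated polynomial $B_m(qb,d)$ never appears. You instead start from $B_{m+1}$, use the \emph{second} identity of Lemma \ref{lem:binomial} to obtain the dilation identity $B_{m+1}(b,d)=d\,B_m(qb,d)+b\,B_m(b,d)$, and then eliminate $B_m(qb,d)$ via the difference identity $B_m(b,d)-B_m(qb,d)=(1-q^m)b\,B_{m-1}(b,d)$, which does follow from the third identity of Lemma \ref{lem:binomial} once you invoke the symmetry $\left[\begin{smallmatrix} m\\ i\end{smallmatrix}\right]_q=\left[\begin{smallmatrix} m\\ m-i\end{smallmatrix}\right]_q$ (not stated in that lemma, but standard and easily verified, so this is not a gap). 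This is the classical derivation of the three-term recurrence for Rogers--Szeg\H{o} polynomials, and it buys some conceptual clarity: each auxiliary identity is meaningful on its own, and the origin of the factor $(1-q^m)$ is transparent. The paper's computation is more self-contained, using only what is literally recorded in Lemma \ref{lem:binomial}, at the cost of being one longer chain of reindexings. Both arguments handle the $A$-recurrence the same way, namely by noting that $A_m$ is $B_m$ at base $1/q$ so that the substitution $q\mapsto 1/q$ transfers the first identity to the second; the paper leaves this implicit in its opening remark that it suffices to prove the first equation, whereas you make it explicit.
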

\begin{proof}
It suffices to prove the first equation.  We will use 
Lemma \ref{lem:binomial}.  Note that 
\begin{align*}
(b+d)B_m(b,d)&= \sum_{i=0}^m \left[\begin{array}{c} m\\i\end{array}\right]_q b^i d^{m+1-i} + \sum_{i=1}^{m+1} \left[\begin{array}{c} m\\i-1\end{array}\right]_q b^i d^{m+1-i}\\
&= d^{m+1}+b^{m+1} + 
\sum_{i=1}^m \left( 
\left[\begin{array}{c} m\\i\end{array}\right]_q+
\left[\begin{array}{c} m\\i-1\end{array}\right]_q \right) b^i d^{m+1-i}\\
&= d^{m+1}+b^{m+1} + 
\sum_{i=1}^m \left( 
\left[\begin{array}{c} m+1\\i\end{array}\right]_q+ (1-q^{m-i+1})
\left[\begin{array}{c} m\\i-1\end{array}\right]_q \right) b^i d^{m+1-i}\\
&=  
\sum_{i=0}^{m+1} 
\left[\begin{array}{c} m+1\\i\end{array}\right]_q b^i d^{m+1-i} 
+ \sum_{j=1}^m (1-q^{m-j+1})
\left[\begin{array}{c} m\\j-1\end{array}\right]_q  b^j d^{m+1-j}\\
&=  
\sum_{i=0}^{m+1} 
\left[\begin{array}{c} m+1\\i\end{array}\right]_q b^i d^{m+1-i} 
+ \sum_{i=0}^{m-1} (1-q^{m-i})
\left[\begin{array}{c} m\\i\end{array}\right]_q  b^{i+1} d^{m-i}\\
&=  
\sum_{i=0}^{m+1} 
\left[\begin{array}{c} m+1\\i\end{array}\right]_q b^i d^{m+1-i} 
+ (1-q^m) \sum_{i=0}^{m-1} 
\left[\begin{array}{c} m-1\\i\end{array}\right]_q  b^{i+1} d^{m-i}\\
&= B_{m+1}(b,d)+(1-q^m) bd B_{m-1}(b,d).
\end{align*}
\end{proof}

We now prove Theorem \ref{th:F}.
\begin{proof}
The case $m= 0$ is trivial. We prove the proposition by induction.
Suppose that the proposition is true for all $n<m$. Then
we have the following (see the end of the list of equations
for justifications of each line):
\begin{eqnarray*}
(b+d&-&y(a+c)q^{m-1})F_{m-1}(y)\\
&=&(b+d-y(a+c)q^{m-1}) 
  \sum_{i=0}^{m-1} (-1)^i
  \left[\begin{array}{c} m-1\\ i\end{array}\right]_q q^{i \choose 2} y^i
  A_i(a,c) B_{m-1-i}(b,d)\\
&=& 
  \sum_{i=0}^{m-1} (-1)^i
  \left[\begin{array}{c} m-1\\ i\end{array}\right]_q q^{i \choose 2} y^i
  A_i(a,c) (b+d) B_{m-1-i}(b,d) \\
  &&-\sum_{i=0}^{m-1} (-1)^i
  \left[\begin{array}{c} m-1\\ i\end{array}\right]_q q^{{i \choose 2}+m-1} 
  y^{i+1}
  (a+c) A_i(a,c) B_{m-1-i}(b,d) \\
&=&\sum_{i=0}^m(-1)^i \left[\begin{array}{c} m-1\\ i\end{array}\right]_{q}q^{i\choose 2}y^i A_i(a,c)B_{m-i}(b,d)\\
&&+\sum_{i}(-1)^i \left[\begin{array}{c} m-1\\ i\end{array}\right]_{q}q^{i\choose 2}y^i A_i(a,c)(1-q^{m-i-1})bdB_{m-i-2}(b,d)\\
&&-\sum_{i=0}^m(-1)^i q^{m-1}\left[\begin{array}{c} m-1\\ i\end{array}\right]_{q}q^{i\choose 2}y^{i+1} A_{i+1}(a,c)B_{m-i-1}(b,d)\\
&&-\sum_{i}(-1)^i q^{m-1}\left[\begin{array}{c} m-1\\ i\end{array}\right]_{q}q^{{i\choose 2}}y^{i+1} ac(1-q^{-i})A_{i-1}(a,c)
B_{m-i-1}(b,d)\\
&=&\sum_{i=0}^m(-1)^i \left[\begin{array}{c} m-1\\ i\end{array}\right]_{q}q^{i\choose 2}y^i A_i(a,c)B_{m-i}(b,d)\\
&&+\sum_{i}(-1)^i \left[\begin{array}{c} m-2\\ i\end{array}\right]_{q}q^{i\choose 2}y^i A_i(a,c)(1-q^{m-1})bdB_{m-i-2}(b,d)\\
&&-\sum_{j=1}^m (-1)^{j-1} q^{m-1} \left[\begin{array}{c} m-1 \\ j-1\end{array}\right]_q q^{j-1 \choose 2} y^j A_j(a,c) B_{m-j}(b,d)\\
&&+\sum_{i}(-1)^i q^{m-1-i}\left[\begin{array}{c} m-2\\ i-1\end{array}\right]_{q}q^{{i\choose 2}}y^{i+1} ac(1-q^{m-1})A_{i-1}(a,c)B_{m-i-1}(b,d)\\
&=&\sum_{i=0}^m(-1)^i \left(\left[\begin{array}{c} m-1\\ i\end{array}\right]_{q}+q^{m-i}\left[\begin{array}{c} m-1\\ i-1\end{array}\right]_{q}\right)q^{i\choose 2}y^i A_i(a,c)B_{m-i}(b,d)\\
&&+\sum_{i}(-1)^i \left[\begin{array}{c} m-2\\ i\end{array}\right]_{q}q^{i\choose 2}y^i A_i(a,c)(1-q^{m-1})bdB_{m-i-2}(b,d)\\
&&+\sum_{i}(-1)^i q^{m-1-i}\left[\begin{array}{c} m-2\\ i-1\end{array}\right]_{q}q^{{i\choose 2}}y^{i+1} ac(1-q^{m-1})A_{i-1}(a,c)B_{m-i-1}(b,d)\\
\end{eqnarray*}
\begin{eqnarray*}
&=&\sum_{i=0}^m(-1)^i \left[\begin{array}{c} m\\ i\end{array}\right]_{q}q^{{i\choose 2}}y^i A_i(a,c)B_{m-i}(b,d)\\
&&+(1-q^{m-1})bd \sum_{i}(-1)^i \left[\begin{array}{c} m-2\\ i\end{array}\right]_{q}q^{i\choose 2}y^i A_i(a,c)B_{m-i-2}(b,d)\\
&&-(1-q^{m-1})acy^2q^{m-2} \sum_{j}(-1)^j \left[\begin{array}{c} m-2\\ j\end{array}\right]_{q}q^{j\choose 2}y^j A_j(a,c)B_{m-j-2}(b,d)\\
&=&\sum_{i=0}^m(-1)^i \left[\begin{array}{c} m\\ i\end{array}\right]_{q}q^{i\choose 2}y^i A_i(a,c)B_{m-i}(b,d)
+(1-q^{m-1})(bd-acq^{m-2}y^2)F_{m-2}(y)
\end{eqnarray*}
For the first equality, we used the induction hypothesis,
and for the second equality we simply distributed terms.
For the third equality we used Lemma \ref{lem:AB}. 
For the fourth equality, we changed the index of summation in the 
third line, and we also 
used Lemma \ref{lem:binomial}: 
$$
(1-q^{m-i-1})\left[\begin{array}{c} m-1\\ i\end{array}\right]_{q}=(1-q^{m-1})\left[\begin{array}{c} m-2\\ i\end{array}\right]_{q}
$$
and
$$
(1-q^{i})\left[\begin{array}{c} m-1\\ i\end{array}\right]_{q}=(1-q^{m-1})\left[\begin{array}{c} m-2\\ i-1\end{array}\right]_{q}.
$$
For the fifth equality, we combined two terms.
For the sixth equality, we used Lemma \ref{lem:AB}, and we changed 
the index of summation in the third line.
For the seventh equality, we used the induction hypothesis.

Now we have shown that 
\begin{equation*}(b+d-y(a+c)q^{m-1})F_{m-1}(y)\end{equation*}
is equal to 
\begin{equation*}
\sum_{i=0}^m(-1)^i \left[\begin{array}{c} m\\ i\end{array}\right]_{q}q^{i\choose 2}y^i A_i(a,c)B_{m-i}(b,d)
+(1-q^{m-1})(bd-acq^{m-2}y^2)F_{m-2}(y).
\end{equation*}  
Comparing this to the defining recurrence for $F_m(y)$
(Definition \ref{def:F}), we 
have proved the desired explicit formula for $F_m(y)$.
\end{proof}

\begin{corollary}\label{cor:1}
We have that 
\begin{equation*}
\frac{\langle W| A^r d^m |V \rangle}{\langle W|A^r | V \rangle}  = 
\frac{\sum_{i=0}^m(-1)^i 
\left[\begin{array}{c} m\\ i\end{array}\right]_{q}
q^{i\choose 2}(bdq^r)^i A_i(a,c)B_{m-i}(b,d)}{\prod_{i=0}^{m-1}(1-abcd q^{2r+i})}.
\end{equation*}
\end{corollary}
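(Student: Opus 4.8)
The plan is to observe that Corollary \ref{cor:1} is an immediate consequence of the two results immediately preceding it, namely Corollary \ref{cor:Ak} and Theorem \ref{th:F}, both of which have already been established. First I would recall from Corollary \ref{cor:Ak} that
$$\frac{\langle W|A^r \dd^m |V \rangle}{\langle W| A^r |V \rangle} = \frac{F_m(bd q^r)}{\prod_{i=0}^{m-1} (1-abcd q^{2r+i})},$$
where $F_m(y)$ is the polynomial defined by the recurrence of Definition \ref{def:F}. The denominator on the right already matches the denominator appearing in the target formula, so the entire task reduces to rewriting the numerator $F_m(bd q^r)$ in explicit form.

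For this I would invoke Theorem \ref{th:F}, which supplies the closed-form evaluation
$$F_m(y)=\sum_{i=0}^m(-1)^i \left[\begin{array}{c} m\\ i\end{array}\right]_{q}q^{i\choose 2}y^i A_i(a,c)B_{m-i}(b,d).$$
Setting $y = bd q^r$ and using $y^i = (bd q^r)^i$ turns this into precisely the numerator $\sum_{i=0}^m(-1)^i \left[\begin{array}{c} m\\ i\end{array}\right]_{q} q^{i\choose 2}(bdq^r)^i A_i(a,c)B_{m-i}(b,d)$ displayed in the statement. Combining the two formulas then yields the claimed identity, and the proof is complete after this single substitution.

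As for the main obstacle: there is essentially none at this stage, since all of the genuine analytic and combinatorial work has been carried out earlier. The recurrence for $\langle W|A^r \dd^m|V\rangle$ was derived in Proposition \ref{prop:Ak-recurrence}, its repackaging through the auxiliary sequence $F_m$ was recorded in Corollary \ref{cor:Ak}, and the nontrivial closed-form evaluation of $F_m(y)$ was the content of Theorem \ref{th:F}. Thus Corollary \ref{cor:1} is purely a bookkeeping step. The only point worth a moment's care is notational consistency: the statement writes $\langle W| A^r d^m |V \rangle$ with a lowercase $d$, which should be read as $\dd^m$ in the sense of Corollary \ref{cor:Ak}; before substituting I would confirm that these denote the same quantity, after which the combination of the two displays above gives the result directly.
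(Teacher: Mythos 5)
Your proposal is correct and matches the paper's proof exactly: the paper also derives Corollary \ref{cor:1} by combining Corollary \ref{cor:Ak} with the closed form for $F_m(y)$ from Theorem \ref{th:F}, evaluated at $y = bd q^r$. Your remark about the lowercase $d$ denoting $\dd$ is also the correct reading.
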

\begin{proof}
This follows from Corollary \ref{cor:Ak} and Theorem \ref{th:F}.
\end{proof}

\subsection{A formula for 
$\langle W|\dd^N | V^r \rangle$.} 

First we will prove a recurrence for 
$\langle W|\dd^N | V^r \rangle$. 
Towards this end, we state a simple lemma, which follows
from the definitions of the tridiagonal matrices
$\dd$ and $\ee$.

\begin{lemma}\label{lem:border}
\begin{equation*}
\dd|V^r \rangle = (1-q^{2r-1}abcd)|V^{r-1} \rangle -bdq^r \ee|V^r \rangle
  +R_r |V^r \rangle,
\end{equation*}
where 
\begin{equation*}
R_r = \frac{q^{r-1}}{1-abcd q^{2r-2}}
\left(q(b+d)-q^{r-1} (b+d) abcd + bd (a+c)(1-q^r)\right).
\end{equation*}
\end{lemma}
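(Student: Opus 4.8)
The plan is to compute $\dd\lvert V^r\rangle$ directly from the tridiagonal shape of $\dd$ and then match it against the proposed right-hand side coefficient by coefficient. Since $\lvert V^r\rangle$ is the $r$th standard basis vector, $\dd\lvert V^r\rangle$ is simply the $r$th column of $\dd$, and likewise for $\ee$; both expansions are already recorded in Lemma \ref{lem:relations}, namely
\[
\dd\lvert V^r\rangle = d_{r-1}^\sharp \lvert V^{r-1}\rangle + d_r^\natural \lvert V^r\rangle + d_r^\flat \lvert V^{r+1}\rangle, \qquad
\ee\lvert V^r\rangle = e_{r-1}^\sharp \lvert V^{r-1}\rangle + e_r^\natural \lvert V^r\rangle + e_r^\flat \lvert V^{r+1}\rangle.
\]
Substituting the second expression into the claimed formula turns the identity into an equality of vectors expanded in the basis $\lvert V^{r-1}\rangle, \lvert V^r\rangle, \lvert V^{r+1}\rangle$, so it suffices to check the three scalar coefficients separately.

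First I would dispatch the two off-diagonal coefficients, which are immediate from the explicit entries in Section \ref{sec:sol}. For $\lvert V^{r+1}\rangle$, the formulas give $d_n^\flat = -q^n bd\, e_n^\flat$, so the term $-bdq^r\,\ee\lvert V^r\rangle$ contributes exactly $-bdq^r e_r^\flat = d_r^\flat$, matching the left-hand side (the other two summands carry no $\lvert V^{r+1}\rangle$). For $\lvert V^{r-1}\rangle$, using $d_{r-1}^\sharp = 1$ and $e_{r-1}^\sharp = -q^{r-1}ac$, the right-hand side coefficient is
\[
(1 - q^{2r-1}abcd) - bdq^r e_{r-1}^\sharp = (1 - q^{2r-1}abcd) + abcd\,q^{2r-1} = 1 = d_{r-1}^\sharp,
\]
as required.

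What remains, and what I expect to be the only real work, is the $\lvert V^r\rangle$ coefficient, which amounts to the scalar identity $R_r = d_r^\natural + bdq^r e_r^\natural$. Here I would substitute the explicit expressions for $d_r^\natural$ and $e_r^\natural$, both of which carry the common prefactor $\frac{q^{r-1}}{(1-q^{2r-2}abcd)(1-q^{2r}abcd)}$. After factoring this out, the claim reduces to showing that
\[
(\text{bracket of } d_r^\natural) + bdq^r\,(\text{bracket of } e_r^\natural) = (1 - q^{2r}abcd)\bigl(q(b+d) - q^{r-1}(b+d)abcd + bd(a+c)(1-q^r)\bigr),
\]
so that the factor $(1-q^{2r}abcd)$ cancels and leaves precisely the stated $R_r$. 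This is a routine but slightly lengthy polynomial verification: writing $s_1 = b+d$, $s_2 = a+c$, and $P = abcd$, and using $ab^2cd^2(a+c) = P\,bd\,s_2$ together with $a^2bc^2d(b+d) = P\,ac\,s_1$, four pairs of terms cancel in the left-hand bracket sum, and the eight surviving monomials match the expansion of the right-hand side term for term. The main (and essentially only) obstacle is the bookkeeping in this cancellation; no conceptual difficulty arises, since everything follows from the definitions of $\dd$ and $\ee$ in Section \ref{sec:sol}.
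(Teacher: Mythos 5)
Your proposal is correct and is exactly the argument the paper intends: the paper gives no explicit proof, stating only that the lemma ``follows from the definitions of the tridiagonal matrices $\dd$ and $\ee$,'' and your coefficient-by-coefficient check (the two off-diagonal coefficients via $d_n^\flat = -q^n bd\, e_n^\flat$ and $e_{r-1}^\sharp = -q^{r-1}ac$, plus the scalar identity $R_r = d_r^\natural + bdq^r e_r^\natural$ with the factor $(1-q^{2r}abcd)$ cancelling) is precisely the omitted verification. The polynomial bookkeeping you describe does work out as claimed, with the four cancelling pairs and eight surviving monomials matching the expansion of $(1-q^{2r}abcd)R_r\cdot(1-abcdq^{2r-2})/q^{r-1}$ term by term.
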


\begin{theorem}\label{thm:dd-recurrence}
The quantity $\langle W|\dd^N |V^r\rangle$ satisfies the following
recurrence.
\begin{align*}
(1-abcd q^{r+N-1}) \langle W|\dd^N |V^r \rangle &= 
(1-q^{2r-1} abcd) \langle W|\dd^{N-1}|V^{r-1}\rangle 
-bdq^r (1-q^{N-1})\langle W|\dd^{N-2}|V^r \rangle \\
&+(R_r - bd q^{r+N-1}(a+c)) \langle W|\dd^{N-1}|V^r \rangle.
\end{align*}
\end{theorem}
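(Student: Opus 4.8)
The plan is to peel one factor of $\dd$ off the right of $\langle W|\dd^N|V^r\rangle$ and rewrite $\dd|V^r\rangle$ using Lemma \ref{lem:border}, then push the stray $\ee$ that this produces leftward across the remaining powers of $\dd$ via the relations of Lemma \ref{lem:relations}, and finally collect a ``feedback'' copy of $\langle W|\dd^N|V^r\rangle$ back onto the left-hand side. This is the same strategy used in the proof of Proposition \ref{prop:Ak-recurrence}; the only difference is that the boundary vector is $|V^r\rangle$ rather than $|V\rangle$, which is precisely why we invoke Lemma \ref{lem:border} in place of the simpler relation $\dd|V\rangle=(b+d)|V\rangle-bd\,\ee|V\rangle$.

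Concretely, I would begin by writing $\langle W|\dd^N|V^r\rangle=\langle W|\dd^{N-1}\bigl(\dd|V^r\rangle\bigr)$ and substituting Lemma \ref{lem:border}. This immediately produces the term $(1-q^{2r-1}abcd)\langle W|\dd^{N-1}|V^{r-1}\rangle$ and the term $R_r\langle W|\dd^{N-1}|V^r\rangle$, both of which already appear in the claimed recurrence, together with the leftover contribution $-bd\,q^r\langle W|\dd^{N-1}\ee|V^r\rangle$. To handle this last piece I would apply the commutation relation $\dd^{N-1}\ee=q^{N-1}\ee\,\dd^{N-1}+(1-q^{N-1})\dd^{N-2}$ from Lemma \ref{lem:relations}. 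The $(1-q^{N-1})\dd^{N-2}$ part contributes exactly $-bd\,q^r(1-q^{N-1})\langle W|\dd^{N-2}|V^r\rangle$, matching the second term on the right-hand side of the recurrence.

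It then remains to treat $-bd\,q^{r+N-1}\langle W|\ee\,\dd^{N-1}|V^r\rangle$, which I would rewrite using the left-boundary relation $\langle W|\ee=(a+c)\langle W|-ac\,\langle W|\dd$ from Lemma \ref{lem:relations}. This splits it into a multiple of $\langle W|\dd^{N-1}|V^r\rangle$, which combines with the $R_r$ term above to give the coefficient $R_r-bd\,q^{r+N-1}(a+c)$, and into the feedback term $+abcd\,q^{r+N-1}\langle W|\dd^N|V^r\rangle$. Moving this feedback term to the left-hand side produces the factor $(1-abcd\,q^{r+N-1})$ in front of $\langle W|\dd^N|V^r\rangle$ and completes the identification with the stated recurrence. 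No step here is conceptually hard; the only genuine obstacle is careful bookkeeping of the $q$-powers and the sign in the feedback term, together with verifying that the various contributions to the coefficient of $\langle W|\dd^{N-1}|V^r\rangle$ assemble into precisely $R_r-bd\,q^{r+N-1}(a+c)$.
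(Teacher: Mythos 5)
Your proposal is correct and follows exactly the paper's own proof: peel one $\dd$ off on the right, expand $\dd|V^r\rangle$ via Lemma \ref{lem:border}, commute the resulting $\ee$ past $\dd^{N-1}$ using $\dd^{N-1}\ee = q^{N-1}\ee\,\dd^{N-1}+(1-q^{N-1})\dd^{N-2}$, eliminate $\langle W|\ee$ with the left-boundary relation, and move the resulting $abcd\,q^{r+N-1}\langle W|\dd^N|V^r\rangle$ term to the left-hand side. All coefficients and $q$-powers in your outline check out against the stated recurrence.
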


\begin{proof}
In what follows, we will use Lemma \ref{lem:border} and Lemma 
\ref{lem:relations} -- in particular the 
relations $\dd^k \ee = q^k \ee \dd^k +(1-q^k) \dd^{k-1}$ and 
$\langle W|\ee = (a+c) \langle W| -ac \langle W| \dd$. 

\begin{align*}
\langle W| \dd^N | V^r \rangle =& 
(1-q^{2r-1} abcd) \langle W| \dd^{N-1}|V^{r-1}\rangle
 - bdq^r \langle W| \dd^{N-1} \ee |V^r \rangle + 
 R_r \langle W| \dd^{N-1} | V^r \rangle\\
=& 
(1-q^{2r-1} abcd) \langle W| \dd^{N-1}|V^{r-1}\rangle
 + R_r \langle W| \dd^{N-1} | V^r \rangle\\
 &- bdq^{r+N-1} \langle W| \ee \dd^{N-1} |V^r \rangle 
   -bd q^r (1-q^{N-1}) \langle W| \dd^{N-2} | V^r \rangle\\ 
=& 
(1-q^{2r-1} abcd) \langle W| \dd^{N-1}|V^{r-1}\rangle
 + R_r \langle W| \dd^{N-1} | V^r \rangle
 - bdq^{r+N-1} (a+c) \langle W|  \dd^{N-1} |V^r \rangle  \\
 &+abcd q^{r+N-1} \langle W| \dd^N |V^r \rangle
   -bd q^r (1-q^{N-1}) \langle W| \dd^{N-2} | V^r \rangle.
\end{align*}
Collecting like terms now yields  the recurrence in 
Theorem \ref{thm:dd-recurrence}.
\end{proof}

\begin{theorem}\label{thm:explicit-d}
We have that 
\begin{equation*}
{\langle W|\dd^{m+r} |V^r \rangle} = 
\left[\begin{array}{c} m+r\\ r\end{array}\right]_q 
\frac{F_m(bd q^r)}{\prod_{i=0}^{m-1} (1-abcd q^{2r+i})}.
\end{equation*}
\end{theorem}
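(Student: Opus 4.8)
The plan is to prove Theorem~\ref{thm:explicit-d} by induction on $N=m+r$, verifying that the proposed closed form satisfies the recurrence established in Theorem~\ref{thm:dd-recurrence}. Write $\hat H(m,r)=\left[\begin{array}{c} m+r\\ r\end{array}\right]_q \frac{F_m(bdq^r)}{\prod_{i=0}^{m-1}(1-abcd q^{2r+i})}$ for the conjectured value of $\langle W|\dd^{m+r}|V^r\rangle$; by Corollary~\ref{cor:Ak} this equals $\left[\begin{array}{c} m+r\\ r\end{array}\right]_q \frac{\langle W|A^r\dd^m|V\rangle}{\langle W|A^r|V\rangle}$, which is the bookkeeping-friendly form. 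Since Theorem~\ref{thm:dd-recurrence} expresses $\langle W|\dd^{m+r}|V^r\rangle$ in terms of $\langle W|\dd^{m+r-1}|V^{r-1}\rangle$, $\langle W|\dd^{m+r-1}|V^r\rangle$, and $\langle W|\dd^{m+r-2}|V^r\rangle$, all of strictly smaller $N$, it suffices to check that $\hat H$ obeys the same recurrence together with suitable base cases. When $r=0$ the $q$-binomial equals $1$ and the claim is exactly Corollary~\ref{cor:Ak} (using $\langle W|V\rangle=1$); when $m=0$ both sides equal $1$, since $\langle W|\dd^r|V^r\rangle$ picks up only the all-northeast path of weight $\prod_i d_i^\sharp=1$. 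These anchor the induction.

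For the inductive step (with $r\ge 1$) I would substitute $\hat H$ for each lower term in Theorem~\ref{thm:dd-recurrence}. The coefficient $(1-abcd q^{m+2r-1})$ on the left is precisely the top factor of $\prod_{i=0}^{m-1}(1-abcd q^{2r+i})$, so it cancels one factor of the denominator of $\hat H(m,r)$; the products in the denominators of $\hat H(m-1,r)$, $\hat H(m-2,r)$ and $\hat H(m,r-1)$ then combine against it, and the various $q$-binomials are reconciled using the identities of Lemma~\ref{lem:binomial}. After clearing denominators the statement collapses to a single identity among $F_m(bdq^r)$, $F_{m-1}(bdq^r)$, $F_{m-2}(bdq^r)$ and $F_m(bdq^{r-1})$. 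The first three carry the common argument $bdq^r$ and are linked by the defining recurrence of Definition~\ref{def:F}; expanding $R_r$ from Lemma~\ref{lem:border} and combining it with the $-bd q^{m+2r-1}(a+c)$ term produces exactly the coefficients $b+d-(a+c)q^{m-1}y$ and $(q^{m-1}-1)(bd-acq^{m-2}y^2)$ appearing there, so this part of the identity matches essentially term by term.

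The one genuinely new ingredient, and the step I expect to be the main obstacle, is the cross-$r$ contribution carrying $F_m(bdq^{r-1})$: its argument is shifted by one power of $q$ relative to the other three, and there is no clean two-term relation linking $F_m(bdq^{r-1})$ to the $F_\bullet(bdq^r)$. To handle it I would expand every $F$ via the explicit formula of Theorem~\ref{th:F}, namely $F_m(y)=\sum_{i=0}^m(-1)^i\left[\begin{array}{c} m\\ i\end{array}\right]_q q^{\binom{i}{2}}y^i A_i(a,c)B_{m-i}(b,d)$, so that the discrepancy between $(bdq^{r-1})^i$ and $(bdq^r)^i$ becomes a transparent factor $q^{-i}$ in the $i$-th summand. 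The reduced identity can then be checked coefficient by coefficient in the basis $\{A_i(a,c)B_{m-i}(b,d)\}$: Lemma~\ref{lem:AB} is used to absorb the prefactors $(a+c)$, $ac$, $(b+d)$, $bd$ into index shifts of the $A_i$ and $B_j$, while Lemma~\ref{lem:binomial} disposes of the residual $q$-binomial bookkeeping (and of the extra factor $1-abcd q^{2r-2}$ that the shifted denominator $\prod_{i=0}^{m-1}(1-abcd q^{2r-2+i})$ contributes). Once these coefficient comparisons are assembled, $\hat H$ is seen to satisfy the recurrence, and the induction closes.
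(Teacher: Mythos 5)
Your overall framework is exactly the paper's own: induct on $N=m+r$ against the recurrence of Theorem \ref{thm:dd-recurrence}, anchor with the cases $r=0$ (which, as you say, is just Corollary \ref{cor:Ak} since $|V^0\rangle=|V\rangle$ and $\langle W|V\rangle=1$) and $m=0$, and reduce the inductive step to showing that $\left[\begin{array}{c} m+r\\ r\end{array}\right]_q \frac{\langle W|A^r\dd^m|V\rangle}{\langle W|A^r|V\rangle}$ satisfies the same recurrence. After dividing out the $q$-binomial ratios (Lemma \ref{lem:binomial}), that inductive step is precisely the paper's Proposition \ref{prop:recurrences}: a four-term identity among $F_m(bdq^r)$, $F_{m-1}(bdq^r)$, $F_{m-2}(bdq^r)$ and the argument-shifted $F_m(bdq^{r-1})$. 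Up to this point your proposal is sound and coincides with the paper.

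The gap is in your plan for proving that identity. First, the same-argument part does not ``match essentially term by term'' with Definition \ref{def:F}: substituting the defining recurrence for $F_m(bdq^r)$ into the left-hand side leaves a residual identity expressing $(1-q^r)(1-abcdq^{2r+m-2})F_m(bdq^{r-1})$ as a combination of $F_{m-1}(bdq^r)$ and $F_{m-2}(bdq^r)$ with complicated coefficients, and this residual identity carries all the content; already at $m=1$, where $F_{-1}=0$ and the defining recurrence is vacuous, the identity is a genuinely nontrivial statement mixing the two arguments. Second, and more seriously, the final verification ``coefficient by coefficient in the basis $\{A_i(a,c)B_{m-i}(b,d)\}$'' cannot be carried out with Lemma \ref{lem:AB} alone: that lemma rewrites $(a+c)A_i$ and $(b+d)B_j$, but it cannot absorb the prefactors $ac$, $bd$, $abcd$ (which occur in $R_r$, in the coefficient of $F_{m-2}$, and in the factors $1-abcdq^{2r+\bullet}$) into index shifts --- $acA_{i-1}$ is not a linear combination of the $A_j$'s, nor is $bdB_{j-1}$ of the $B_k$'s --- so after expanding via Theorem \ref{th:F} the terms you must compare do not lie in the span of the products $A_iB_j$, and comparing coefficients there is not meaningful. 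The paper's remedy is to descend further: replace $q^r$ by a formal variable $y$ (Proposition \ref{prop:F2}), extract the coefficient of $y^n$ (Lemma \ref{lem:coeff}), split the result by bidegree in $(a,c)$ versus $(b,d)$ into \eqref{eq:1} and \eqref{eq:2}, and then extract coefficients of individual monomials $a^jb^{n+i}c^{n-j}d^{m-i}$, where \eqref{eq:x} turns everything into explicit $q$-binomial identities (Lemmas \ref{lem:eq1} and \ref{lem:eq2}). Your argument needs this monomial-level reduction (or some substitute for it) to close; as written, the last step would fail.
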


Note that once we have proved Theorem \ref{thm:explicit-d},
combining this theorem with Corollary \ref{cor:Ak} will 
prove 
Theorem \ref{thm:finalcheck2} and hence 
Theorem \ref{thm:finalcheck}.

\begin{proposition}\label{prop:recurrences}
Define
$$C(m,r) = \frac{F_m(bd q^r)}{\prod_{i=0}^{m-1} (1-abcd q^{2r+i})}.$$
Then $C(m,r)$ satisfies the recurrence
$$
(1-abcdq^{2r+m-1})C(m,r)=A_1(m,r)C(m,r-1)-A_2(m,r)C(m-2,r)+A_3(m,r)C(m-1,r),
$$
where
\begin{eqnarray*}
A_1&=&\frac{(1-q^{2r-1}abcd)(1-q^r)}{(1-q^{m+r})}\\
A_2&=&\frac{bdq^r(1-q^{m-1})(1-q^m)}{(1-q^{m+r})}\\
A_3&=&\frac{(-bd(a+c)q^{m-1+2r}+R_r)(1-q^m)}{(1-q^{m+r})}.
\end{eqnarray*}
Equivalently, $F_m(bdq^r)$ satisfies the following recurrence.
\begin{align*}\label{F:recurrence}
(1-q^{m+r})(1&-abcdq^{2r-2}) F_m (bd q^r) =
   (1-q^r) (1-abcd q^{2r+m-2})F_m(bdq^{r-1})\\
    &-bdq^r(1-q^{m-1})(1-q^m)(1-abcdq^{2r-2})(1-abcdq^{2r+m-2})F_{m-2}(bdq^r)\\
&+(1-q^m)(-bd(a+c)q^{2r+m-1}+R_r)(1-abcdq^{2r-2})F_{m-1}(bdq^r).
\end{align*}

\end{proposition}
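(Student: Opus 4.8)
The plan is to regard the two displayed recurrences as two forms of a single identity, to reduce to the (polynomial) $F$-form, and then to verify that form directly from the closed expression for $F_m$ supplied by Theorem~\ref{th:F}. Crucially, I will not invoke Theorem~\ref{thm:explicit-d} or Theorem~\ref{thm:dd-recurrence}, since this proposition is the ingredient used to establish them; the only inputs are the definition of $C(m,r)$ and the properties of $F_m$.

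First I would prove that the $C(m,r)$-recurrence and the $F_m(bd q^r)$-recurrence are equivalent. Writing $P(m,r)=\prod_{i=0}^{m-1}(1-abcdq^{2r+i})$, so that $C(m,r)=F_m(bdq^r)/P(m,r)$, the four products occurring in the recurrence telescope against one another:
\begin{align*}
P(m,r) &= (1-abcdq^{2r+m-1})\,P(m-1,r),\\
P(m,r) &= (1-abcdq^{2r+m-2})(1-abcdq^{2r+m-1})\,P(m-2,r),\\
P(m,r-1) &= (1-abcdq^{2r-2})(1-abcdq^{2r-1})\,P(m-2,r).
\end{align*}
Multiplying the $C$-recurrence through by $(1-q^{m+r})\prod_{i=-2}^{m-1}(1-abcdq^{2r+i})$ and then cancelling the factor $(1-abcdq^{2r-1})(1-abcdq^{2r+m-1})$, which is common to every term, converts it precisely into the $F$-recurrence. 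The factor $(1-abcdq^{2r-2})$ multiplying $R_r$ in the last term is exactly what is needed to clear the denominator $1-abcdq^{2r-2}$ hidden inside $R_r$, so the $F$-form is a genuine polynomial identity in $a,b,c,d,q$ and $z:=q^r$. Hence it suffices to prove the $F$-recurrence.

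Next I would establish the $F$-recurrence by substituting
$$F_m(y)=\sum_{i=0}^m (-1)^i \left[\begin{array}{c} m\\ i\end{array}\right]_q q^{\binom{i}{2}} y^i A_i(a,c)\,B_{m-i}(b,d)$$
into the four terms, taking $y=bd q^r$ in three of them and $y=bdq^{r-1}$ in the term carrying $F_m(bdq^{r-1})$. Since $F_m(bdq^{r-1})$ differs from $F_m(bdq^r)$ only by replacing $q^{ri}$ with $q^{(r-1)i}$ in its $i$th summand, the whole identity becomes a polynomial identity in $z=q^r$, and I would match the coefficient of each monomial $z^j A_\bullet(a,c)B_\bullet(b,d)$ separately. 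The matching uses the three-term relations of Lemma~\ref{lem:AB} to reconcile the indices of the $A$'s and $B$'s produced by the $(a+c)$- and $bd$-factors, together with the Gaussian-binomial identities of Lemma~\ref{lem:binomial} to reconcile the coefficients $\left[\begin{array}{c} m\\ i\end{array}\right]_q$. This is exactly the style of argument already used in the proof of Theorem~\ref{th:F}, now carrying the additional parameter $r$.

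The main obstacle will be this final coefficient-matching, and specifically the term built from $R_r$. Because $R_r$ is itself a three-term expression in $q^r$, the product $(1-abcdq^{2r-2})\bigl(-bd(a+c)q^{2r+m-1}+R_r\bigr)F_{m-1}(bdq^r)$ generates several families of monomials whose $A$- and $B$-indices must first be realigned, via Lemma~\ref{lem:AB}, with those coming from the $F_m$ and $F_{m-2}$ terms before the cancellations of Lemma~\ref{lem:binomial} apply. Organizing the bookkeeping so that these realignments are transparent --- rather than being swamped by the many monomial types that appear --- is the delicate part of the proof; the telescoping of products in the first step and the substitution of the closed form are otherwise routine.
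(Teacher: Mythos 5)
Your overall strategy is the same as the paper's: the paper also treats the two displayed recurrences as equivalent forms of one identity (your telescoping of the products $P(m,r)$ is correct, and is in fact more explicit than the paper, which dismisses this step with the single word ``Equivalently''), and it also proves the $F$-form by expanding the closed formula of Theorem \ref{th:F} and comparing coefficients. The paper's implementation of the comparison, however, is organized differently and is carried to completion: it first proves a recurrence for $F_m(bdy)$ in a \emph{generic} variable $y$ (Proposition \ref{prop:F2}, which gives the proposition at $y=q^r$ once one checks that the bracketed cubic-in-$y$ coefficient there equals $(R_r-bd(a+c)q^{2r+m-1})(1-abcdq^{2r-2})$); it then extracts the coefficient of $y^n$, written in terms of the quantities $X_{m,n}$ of \eqref{def:X} (Lemma \ref{lem:coeff}); it splits the resulting identity into two identities \eqref{eq:1} and \eqref{eq:2} according to the bidegree in $(a,c)$ versus $(b,d)$; and finally it extracts the coefficient of each monomial $a^jb^{n+i}c^{n-j}d^{m-i}$, using the explicit product formula \eqref{eq:x}, so that everything collapses to scalar $q$-binomial identities (Lemmas \ref{lem:eq1} and \ref{lem:eq2}) that can be checked directly.

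The gap in your proposal is that this coefficient comparison --- which is the entire mathematical content of the proposition --- is only sketched, and the mechanism you propose for it does not work quite as stated. Matching ``coefficients of $z^jA_\bullet(a,c)B_\bullet(b,d)$'' presupposes that every term can be expanded in such products, but Lemma \ref{lem:AB} only rewrites $(a+c)A_i$ and $(b+d)B_j$; it does not linearize the terms carrying $ac$, $bd$, or $abcd$ multipliers, such as those produced by $(1-abcdq^{2r-2})$, $(1-abcdq^{2r+m-2})$, the prefactor $bdq^r$ of the $F_{m-2}$ term, or the $ab^2cd^2(a+c)$-term arising from $(1-abcdq^{2r-2})R_r$. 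To make your matching legitimate you must either work in the extended collection $\{(ac)^s(bd)^t\,A_i(a,c)B_j(b,d)\}$ --- which \emph{is} a basis of the polynomials symmetric separately in $\{a,c\}$ and in $\{b,d\}$, but this needs to be stated and used --- or descend all the way to monomials in $a,b,c,d$, which is exactly what the paper does. Until one of these reductions is carried out and the resulting $q$-binomial identities are verified, the proposition is not proved: your plan assembles the right ingredients (the equivalence of the two forms, Theorem \ref{th:F}, Lemmas \ref{lem:binomial} and \ref{lem:AB}) but stops precisely where the paper's real work begins.
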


\begin{lemma}
To prove Theorem \ref{thm:explicit-d}, it suffices to prove 
Proposition \ref{prop:recurrences}.
\end{lemma}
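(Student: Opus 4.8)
The plan is to prove the implication \emph{Proposition~\ref{prop:recurrences} $\Rightarrow$ Theorem~\ref{thm:explicit-d}} by showing that both sides of the asserted identity satisfy the same recurrence and the same initial conditions. Write $G(m,r) = \langle W|\dd^{m+r}|V^r\rangle$ for the left-hand side and $\tilde G(m,r) = \left[\begin{array}{c} m+r\\ r\end{array}\right]_q C(m,r)$ for the claimed right-hand side, with $C(m,r)$ as in Proposition~\ref{prop:recurrences}. First I would rewrite the recurrence of Theorem~\ref{thm:dd-recurrence} in the coordinates $(m,r)=(N-r,r)$: setting $N=m+r$ turns it into a relation expressing $(1-abcd\,q^{m+2r-1})G(m,r)$ in terms of $G(m,r-1)$, $G(m-1,r)$, and $G(m-2,r)$. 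Since each reference step lowers either $r$ or $m$, this can be resolved by inducting on $r$ and, for each fixed $r$, on $m$.

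The core step is to check that $\tilde G$ obeys exactly this same recurrence. Multiplying the recurrence of Proposition~\ref{prop:recurrences} through by $\left[\begin{array}{c} m+r\\ r\end{array}\right]_q$ and using $\tilde G(m,r-1) = \left[\begin{array}{c} m+r-1\\ r-1\end{array}\right]_q C(m,r-1)$, $\tilde G(m-1,r) = \left[\begin{array}{c} m+r-1\\ r\end{array}\right]_q C(m-1,r)$, and $\tilde G(m-2,r) = \left[\begin{array}{c} m+r-2\\ r\end{array}\right]_q C(m-2,r)$, the matching of coefficients reduces to three $q$-binomial identities. Writing $(1-q^k)=(1-q)[k]_q$, these are $\left[\begin{array}{c} m+r\\ r\end{array}\right]_q \frac{1-q^r}{1-q^{m+r}} = \left[\begin{array}{c} m+r-1\\ r-1\end{array}\right]_q$, then $\left[\begin{array}{c} m+r\\ r\end{array}\right]_q \frac{(1-q^{m-1})(1-q^m)}{1-q^{m+r}} = (1-q^{m+r-1})\left[\begin{array}{c} m+r-2\\ r\end{array}\right]_q$, and $\left[\begin{array}{c} m+r\\ r\end{array}\right]_q \frac{1-q^m}{1-q^{m+r}} = \left[\begin{array}{c} m+r-1\\ r\end{array}\right]_q$. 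The first handles $A_1$, the second $A_2$, and the third $A_3$ (whose $R_r - bd\,q^{m+2r-1}(a+c)$ part is carried along unchanged, so only the binomial factor needs checking). Each is a routine $q$-factorial manipulation, and together they show that the rescaled recurrence for $\tilde G$ is literally the recurrence of Theorem~\ref{thm:dd-recurrence}.

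It then remains to settle the base cases. For $r=0$ the identity $G(m,0)=\tilde G(m,0)$ is exactly Corollary~\ref{cor:Ak} with $r=0$: since $\langle W|V\rangle=1$ and $\left[\begin{array}{c} m\\ 0\end{array}\right]_q=1$, we get $\langle W|\dd^m|V\rangle = C(m,0) = \tilde G(m,0)$. For each $r\ge 1$ the inner induction on $m$ starts at $m=0$, where the only $\dd$-Motzkin path from height $0$ to height $r$ in $r$ steps is the all-up path, of weight $\prod_{i=0}^{r-1} d_i^{\sharp}=1$, so $G(0,r)=1=\tilde G(0,r)$. I would also record the degenerate values $G(-1,r)=\langle W|\dd^{r-1}|V^r\rangle=0=\tilde G(-1,r)$ (the former because one cannot descend from height $r$ to height $0$ in $r-1$ steps, the latter because $F_{-1}=0$), which keeps the $m=1$ instance of the recurrence consistent. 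Since $G$ and $\tilde G$ satisfy the same recurrence and agree on these base cases, induction on $(r,m)$ forces $G=\tilde G$, which is Theorem~\ref{thm:explicit-d}.

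The step I expect to be the main obstacle is bookkeeping rather than conceptual: correctly translating Theorem~\ref{thm:dd-recurrence} into the $(m,r)$ coordinates and verifying the three $q$-binomial identities so that the coefficients on $G(m,r-1)$, $G(m-1,r)$, and $G(m-2,r)$ match after rescaling by $\left[\begin{array}{c} m+r\\ r\end{array}\right]_q$. The conceptual content is precisely that the two recurrences coincide once the $q$-binomial prefactor is inserted; once the identities are in hand, the induction closes immediately.
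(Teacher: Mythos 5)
Your proposal is correct and takes essentially the same route as the paper: the paper's proof also verifies base cases and observes that the recurrence of Proposition \ref{prop:recurrences}, once multiplied by $\left[\begin{array}{c} m+r\\ r\end{array}\right]_q$, coincides with the recurrence of Theorem \ref{thm:dd-recurrence}, then closes by induction. You simply supply the details the paper labels ``straightforward'' --- the three $q$-binomial identities (all of which check out) and the degenerate values at $m=0$ and $m=-1$.
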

\begin{proof}
To prove Theorem \ref{thm:explicit-d}, we need to show that 
\begin{equation}\label{eq:dd}
{\langle W|\dd^{m+r} |V^r \rangle} = 
\left[\begin{array}{c} m+r\\ r\end{array}\right]_q 
C(m,r).
\end{equation}
It is easy to verify this equation for $m+r \leq 1$.  Moreover 
it is straightforward to verify that the recurrences
in Proposition \ref{prop:recurrences} are equivalent to 
the recurrence for $\langle W|\dd^{m+r}|V^r\rangle$ (see Theorem 
\ref{thm:dd-recurrence}), provided that we have 
\eqref{eq:dd}. 
\end{proof}

Therefore to complete the proof of Theorem \ref{thm:explicit-d},
we need to prove Proposition \ref{prop:recurrences}.  In particular,
we will prove the recurrence for $F_m(bdq^r)$, which follows
from Proposition \ref{prop:F2} below when $y=q^r$.

\begin{proposition}\label{prop:F2}
$F_m(bdy)$ satisfies the following recurrence.
\begin{align*}
(1-yq^m)&(1-y^2q^{-2}abcd)F_m(bdy)=(1-y^2q^{m-2}abcd)(1-y)F_m(bdy/q)\\
&-bdy(1-q^m)(1-q^{m-1})(1-abcdy^2q^{m-2})(1-abcdy^2q^{-2})F_{m-2}(bdy)\\
&+(1-q^m)\{ y\left[b+d+q^{-1}bd(a+c) \right]
               -y^2 \left[ q^{-1}(1+q^m)bd(a+c)+q^{-2}(b+d)abcd \right]\\
               &+y^4 \left[q^{m-3} ab^2 cd^2(a+c) \right] \} F_{m-1}(bdy).
\end{align*}
\end{proposition}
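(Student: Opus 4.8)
The plan is to prove Proposition \ref{prop:F2} as an identity of polynomials in $y$, by substituting the closed form for $F_m$ from Theorem \ref{th:F} into both sides and showing the difference vanishes. Since $F_m$ has degree $m$ in its argument, every term in the asserted recurrence is a polynomial in $y$ of degree $m+3$, so it is enough to prove the identity coefficient-by-coefficient in $y$; this is cleaner than treating only the specialization $y=q^r$ that is actually needed for Proposition \ref{prop:recurrences}. Writing
$$F_m(bdy)=\sum_{k=0}^m g_{m,k}\,y^k,\qquad g_{m,k}=(-1)^k\left[\begin{array}{c} m\\ k\end{array}\right]_q q^{\binom{k}{2}}(bd)^k A_k(a,c)\,B_{m-k}(b,d),$$
we immediately get $F_m(bdy/q)=\sum_k g_{m,k}\,q^{-k}y^k$, so all four quantities $F_m(bdy)$, $F_m(bdy/q)$, $F_{m-1}(bdy)$, $F_{m-2}(bdy)$ are expressed through the single family $g_{\bullet,\bullet}$.

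First I would expand the four polynomial prefactors --- $(1-yq^m)(1-y^2q^{-2}abcd)$ times $F_m(bdy)$, $(1-y^2q^{m-2}abcd)(1-y)$ times $F_m(bdy/q)$, $bdy(1-abcdy^2q^{m-2})(1-abcdy^2q^{-2})$ times $F_{m-2}(bdy)$, and the degree-four bracket times $F_{m-1}(bdy)$ --- and collect the coefficient of a fixed power $y^n$. This produces a single linear relation in which $g_{m,\bullet}$ enters through the indices $n,n-1,n-2,n-3$ (once from the left-hand side and once, carrying a factor $q^{-k}$, from the $F_m(bdy/q)$ term), $g_{m-2,\bullet}$ enters through $n-1,n-3,n-5$, and $g_{m-1,\bullet}$ enters through $n-1,n-2,n-4$.

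The heart of the proof is to verify this relation, and the strategy is exactly the one used in the proof of Theorem \ref{th:F}: reduce the index-$m$ and index-$(m-2)$ entries to index-$(m-1)$ entries. I would apply the three $q$-Pascal identities of Lemma \ref{lem:binomial} to rewrite each $\left[\begin{array}{c} m\\ k\end{array}\right]_q$ and $\left[\begin{array}{c} m-2\\ k\end{array}\right]_q$ in terms of $\left[\begin{array}{c} m-1\\ k\end{array}\right]_q$ and $\left[\begin{array}{c} m-1\\ k-1\end{array}\right]_q$, and use the two recurrences of Lemma \ref{lem:AB} to strip a factor $(a+c)$ off $A_k(a,c)$ or a factor $(b+d)$ off $B_{m-k}(b,d)$ whenever the index matching demands it. The explicit powers of $q$ in the coefficients of the proposition (the $q^{-2}$, $q^{m-2}$, $q^{m-1}$, $q^{m-3}$ factors) are precisely what is needed for these identities to line up with the prefactor $q^{\binom{k}{2}}$ after the index shifts.

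The main obstacle will be the bookkeeping rather than any conceptual difficulty: each of the four products contributes several index-shifted summands, so a generic coefficient of $y^n$ is an alternating combination of on the order of a dozen terms, and one must check that all $q$-powers and all $A$/$B$ index shifts cancel. To guard against errors I would first record two independent checks. The specialization $q\to1$ collapses $F_m(bdy)$ to $((b+d)-bdy(a+c))^m$, the factors $(1-q^m)$ kill the $F_{m-1}$ and $F_{m-2}$ terms, and the identity reduces to the trivial statement that the $F_m(bdy)$ and $F_m(bdy/q)$ terms agree. Separately, the small cases $m=0,1,2$ can be verified directly from Definition \ref{def:F}. Once the coefficient of every $y^n$ is shown to vanish in the difference of the two sides, the polynomial identity follows, which is Proposition \ref{prop:F2}.
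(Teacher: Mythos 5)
Your proposal is correct, and its skeleton coincides with the paper's own proof: the paper likewise proves the proposition as a polynomial identity in $y$ by substituting the closed form of Theorem \ref{th:F} (writing $X_{m,n}$ for your $g_{m,n}$, see \eqref{def:X}) and extracting the coefficient of $y^n$; the linear relation you describe --- with $g_{m,\bullet}$ at indices $n,n-1,n-2,n-3$ (once plain, once with $q^{-k}$), $g_{m-2,\bullet}$ at $n-1,n-3,n-5$, and $g_{m-1,\bullet}$ at $n-1,n-2,n-4$ --- is exactly the paper's Lemma \ref{lem:coeff}. Where you diverge is the endgame. The paper first splits the coefficient identity \eqref{eq:coeff} into the two independent identities \eqref{eq:1} and \eqref{eq:2}, by observing that every term has bidegree either $(n,m+n)$ or $(n-1,m+n-1)$ in $(a,c)$ and $(b,d)$, and then goes all the way down to monomials: it extracts the coefficient of $a^jb^{n+i}c^{n-j}d^{m-i}$ using the explicit product-of-$q$-binomials formula \eqref{eq:x}, so that Lemmas \ref{lem:eq1} and \ref{lem:eq2} become pure $q$-binomial identities checkable by hand or by computer algebra. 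You instead keep $A_i(a,c)$ and $B_j(b,d)$ intact and cancel terms using Lemma \ref{lem:AB} together with the $q$-Pascal rules of Lemma \ref{lem:binomial}, mirroring the inductive proof of Theorem \ref{th:F}. This is viable: after absorbing every stray factor $(a+c)$ and $(b+d)$ via Lemma \ref{lem:AB}, all terms become combinations of $(ac)^{\mu}(bd)^{\nu}A_i(a,c)B_j(b,d)$, and these products are linearly independent, so matching their coefficients reduces the verification to identities in $q$ alone, which must close since your setup and sanity checks (the $q\to 1$ degeneration and small $m$) are sound. The trade-off is that your route keeps expressions shorter and stays in the $A$/$B$ basis, but the cancellation pattern is more delicate to steer; the paper's monomial extraction is longer but completely mechanical at the last step. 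You would do well to adopt the paper's bidegree splitting before starting in either case: since terms of different bidegree cannot cancel one another, it cuts the dozen-term relation into two smaller identities for free.
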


We will prove Proposition \ref{prop:F2} by 
taking the coefficient of $y^n$ in the recurrence and showing that it is an identity.
Towards this end, we define
\begin{equation}\label{def:X}
X_{m,n}=(-1)^n\left[\begin{array}{c} m\\ n\end{array}\right]_{q}q^{n\choose 2}b^nd^n A_n(a,c)B_{m-n}(b,d),
\end{equation}
which is the coefficient of $y^n$ in $F_m(bdy)$.

\begin{lemma}\label{lem:coeff}
Taking the coefficient of $y^n$ in the recurrence  of Proposition \ref{prop:F2} yields the following
equation.
\begin{align}\label{eq:coeff}
X_{m,n} -& q^{-2} abcd X_{m,n-2} - q^m X_{m,n-1} + q^{m-2} abcd X_{m,n-3}  \\
 =&q^{-n} X_{m,n} - q^{-n+1} X_{m,n-1} - q^{m-n} abcd X_{m,n-2} + q^{m-n+1} abcd X_{m,n-3} \nonumber \\
 -&(1-q^m)(1-q^{m-1}) bd X_{m-2,n-1} +  q^{-2} (1-q^m)(1-q^{m-1}) ab^2 cd^2 X_{m-2,n-3} \nonumber \\
 +& (1-q^m)(1-q^{m-1}) \left[ q^{m-2} ab^2 cd^2 X_{m-2,n-3} -  q^{m-4}  (abcd)^2 bd X_{m-2,n-5}\right] \nonumber\\
 +&(1-q^m)(b+d) X_{m-1,n-1} + q^{-1} (1-q^m) bd(a+c) X_{m-1,n-1} \nonumber \\
 -&(1-q^m)\left[ q^{-1}(1+q^m) bd(a+c) + q^{-2} abcd(b+d) \right] X_{m-1,n-2}\nonumber \\
 +& (1-q^m) q^{m-3} ac (a+c) (bd)^2  X_{m-1,n-4}. \nonumber
\end{align}
\end{lemma}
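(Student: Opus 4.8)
The plan is to recognize that the entire content of Lemma \ref{lem:coeff} is the extraction of the coefficient of $y^n$ from the two sides of the recurrence in Proposition \ref{prop:F2}. The essential input is the definition \eqref{def:X}, which says that $X_{m,n}$ is exactly $[y^n] F_m(bdy)$, so that $F_m(bdy) = \sum_{n \ge 0} X_{m,n} y^n$ (and likewise for $F_{m-1}$ and $F_{m-2}$). The only additional observation needed is the scaling rule $[y^n] F_m(bdy/q) = q^{-n} X_{m,n}$, which holds because replacing $y$ by $y/q$ multiplies the coefficient of $y^n$ by $q^{-n}$. With these two facts in hand, the lemma reduces to a finite, term-by-term coefficient computation.

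First I would handle the left-hand side $(1-yq^m)(1-y^2 q^{-2}abcd)F_m(bdy)$. Expanding the prefactor as $1 - q^m y - q^{-2}abcd\, y^2 + q^{m-2}abcd\, y^3$ and multiplying by $\sum_n X_{m,n} y^n$, the coefficient of $y^n$ is $X_{m,n} - q^m X_{m,n-1} - q^{-2}abcd\, X_{m,n-2} + q^{m-2}abcd\, X_{m,n-3}$, which is precisely the left side of \eqref{eq:coeff}. Next I would take the three groups of terms on the right-hand side of Proposition \ref{prop:F2} one at a time: the term with $F_m(bdy/q)$ (using the scaling rule above to insert the factors $q^{-n}$), the term with $F_{m-2}(bdy)$, and the term with $F_{m-1}(bdy)$. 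In each case I would expand the polynomial prefactor in $y$, multiply by the relevant generating series, and read off the coefficient of $y^n$ by shifting the summation index. The $F_m(bdy/q)$ group reproduces the first line of the right side of \eqref{eq:coeff}, the $F_{m-2}$ group reproduces the next two lines, and the $F_{m-1}$ group reproduces the final three lines.

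The bookkeeping obstacle, such as it is, lies in the higher-degree prefactors. The $F_{m-2}$ term carries the factor $(1-abcd\, y^2 q^{m-2})(1-abcd\, y^2 q^{-2})$, which after multiplication by $-bdy(1-q^m)(1-q^{m-1})$ produces terms of degree $1$, $3$, and $5$ in $y$; matching these to the $X_{m-2,n-1}$, $X_{m-2,n-3}$, and $X_{m-2,n-5}$ contributions of \eqref{eq:coeff} requires keeping track of the two distinct $y^3$ pieces. Similarly the $F_{m-1}$ term has a $y^4$ prefactor feeding into $X_{m-1,n-4}$. The one genuinely non-mechanical point is that several monomials must be rewritten to match the stated form of \eqref{eq:coeff}: in particular $ab^2cd^2 = abcd\cdot bd = ac\,(bd)^2$, which is why the $y^3$ contribution from the $F_{m-2}$ term appears as $ab^2cd^2\, X_{m-2,n-3}$ while the $y^4$ contribution from the $F_{m-1}$ term appears as $ac(a+c)(bd)^2\, X_{m-1,n-4}$. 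Once these identifications are made, collecting all contributions yields \eqref{eq:coeff} exactly, completing the proof.
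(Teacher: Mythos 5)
Your proposal is correct and is exactly the argument the paper has in mind: the paper's own ``proof'' of Lemma \ref{lem:coeff} is simply the sentence ``The proof is straightforward so we omit it,'' and what you have written out --- expanding the polynomial prefactors, using $[y^n]F_m(bdy/q)=q^{-n}X_{m,n}$, shifting indices, and rewriting $ab^2cd^2=abcd\cdot bd=ac(bd)^2$ to match the stated form --- is precisely that omitted routine computation. Nothing is missing; your coefficient bookkeeping for the $F_m(bdy/q)$, $F_{m-2}$, and $F_{m-1}$ groups reproduces the three blocks of \eqref{eq:coeff} exactly.
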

\begin{proof}
The proof is straightforward so we omit it.
\end{proof}

Note that the total degree in $a$ and $c$ in $X_{m,n}$ is $n$, while the 
total degree in $b$ and $d$ in $X_{m,n}$ is $m+n$.
Moreover, each term in \eqref{eq:coeff} has one of the following properties:
\begin{itemize}
\item 
the total degree in $a$ and $c$  is $n$, and the total degree in $b$ and $d$ is $m+n$, or 
\item 
the total degree in $a$ and $c$  is $n-1$, and the total degree in $b$ and $d$ is $m+n-1$.
\end{itemize}
So we can split \eqref{eq:coeff} into two equations based on which of these two conditions holds.
We get equations \eqref{eq:1} and \eqref{eq:2}, respectively.

\begin{align}\label{eq:1}
q^2(1-q^{-n})X_{m,n}-abcd(1-q^{m-n+2})&X_{m,n-2}-q(1-q^m)bd(a+c)X_{m-1,n-1} \\
                         +&(1-q^m)abcd(b+d)X_{m-1,n-2}=0. \nonumber
\end{align}

\begin{align}\label{eq:2}
&(q^{-n+1}-q^m)X_{m,n-1}+(q^{m-2}-q^{m-n+1})abcd X_{m,n-3}-(1-q^m)(b+d)X_{m-1,n-1}\\
&+q^{-1}(1-q^m)(1+q^m)bd(a+c)X_{m-1,n-2}-q^{m-3}(1-q^m)ac(a+c)(bd)^2X_{m-1,n-4} \nonumber\\
&+(1-q^m)(1-q^{m-1})bd\{ X_{m-2,n-1}-(q^{-2}+q^{m-2})abcdX_{m-2,n-3}\nonumber\\
 &\qquad \qquad \qquad \qquad \qquad \qquad \qquad \qquad +q^{m-4}(abcd)^2 X_{m-2,n-5}\}=0.
\nonumber
\end{align}

Now it is clear that Proposition \ref{prop:F2}  follows from Lemmas \ref{lem:eq1} and
\ref{lem:eq2} below.
To prove the lemmas, we will take the coefficient of $a^{j}b^{n+i}c^{n-j}d^{m-i}$ in each of 
\eqref{eq:1} and \eqref{eq:2}, obtaining identities involving 
$q$-binomial coefficients which can be proved by hand or with any computer algebra system.

Let $x(m,n,i,j)$ denote the coefficient of $a^{j}b^{n+i}c^{n-j}d^{m+n-i}$ in $X_{m,n}(q)$.
It is easy to check that 
\begin{equation}\label{eq:x}
x(m,n,i,j)=(-1)^n q^{n \choose 2} \left[\begin{array}{c}m\\n\end{array}\right]_q\left[\begin{array}{c}m-n\\i\end{array}\right]_q\left[\begin{array}{c}n\\j\end{array}\right]_{1/q}.
\end{equation}

\begin{lemma}\label{lem:eq1}
Equation \eqref{eq:1} is an identity.
\end{lemma}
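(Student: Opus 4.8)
The plan is to verify \eqref{eq:1} monomial by monomial. Every one of its four terms is homogeneous of degree $n$ in the pair $(a,c)$ and of degree $m+n$ in the pair $(b,d)$ (the prefactors $abcd$, $bd(a+c)$, and $abcd(b+d)$ exactly compensate for the drop in the indices of the $X$'s). Thus \eqref{eq:1} holds as a polynomial identity precisely when, for every $i$ and $j$, the coefficient of the monomial $a^{j}b^{n+i}c^{n-j}d^{m-i}$ vanishes. By definition this coefficient of $X_{m,n}$ is the quantity $x(m,n,i,j)$, and I would begin by deriving its closed form \eqref{eq:x} directly from \eqref{def:X}, reading off the $\left[\begin{smallmatrix} n \\ j\end{smallmatrix}\right]_{1/q}$ factor from the expansion of $A_n(a,c)$ and the $\left[\begin{smallmatrix} m-n \\ i\end{smallmatrix}\right]_q$ factor from $b^nd^n B_{m-n}(b,d)$. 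With the convention that $x(m,n,i,j)=0$ whenever an index leaves its admissible range, the boundary terms take care of themselves.

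Next I would extract the coefficient of $a^{j}b^{n+i}c^{n-j}d^{m-i}$ from each term, tracking the index shifts produced by the monomial prefactors: multiplying $X_{m,n-2}$ by $abcd$ feeds in $x(m,n-2,i+1,j-1)$; multiplying $X_{m-1,n-1}$ by $bd(a+c)$ feeds in $x(m-1,n-1,i,j-1)+x(m-1,n-1,i,j)$; and multiplying $X_{m-1,n-2}$ by $abcd(b+d)$ feeds in $x(m-1,n-2,i,j-1)+x(m-1,n-2,i+1,j-1)$. Collecting these contributions reduces \eqref{eq:1} to the single scalar identity
\begin{align*}
& q^2(1-q^{-n})\,x(m,n,i,j) - (1-q^{m-n+2})\,x(m,n-2,i+1,j-1) \\
& \quad - q(1-q^m)\bigl[x(m-1,n-1,i,j-1)+x(m-1,n-1,i,j)\bigr] \\
& \quad + (1-q^m)\bigl[x(m-1,n-2,i,j-1)+x(m-1,n-2,i+1,j-1)\bigr] = 0.
\end{align*}

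Finally I would substitute \eqref{eq:x} and cancel the common factor $(-1)^n$ together with the common powers of $q$. The cleanest organization is to isolate the dependence on $j$: all six terms carry a $q$-binomial of the form $\left[\begin{smallmatrix} n-\ast \\ j-\ast\end{smallmatrix}\right]_{1/q}$, and these can be matched against one another using the two Pascal recurrences of Lemma \ref{lem:binomial} in base $1/q$. The remaining factors, built from $\left[\begin{smallmatrix} m \\ n\end{smallmatrix}\right]_q$, $\left[\begin{smallmatrix} m-n \\ i\end{smallmatrix}\right]_q$, and the prefactors $q^{\binom{n}{2}}$, are reconciled by the same recurrences in base $q$ together with the third relation $(1-q^{m})\left[\begin{smallmatrix} m-1 \\ i\end{smallmatrix}\right]_q=(1-q^{m-i})\left[\begin{smallmatrix} m \\ i\end{smallmatrix}\right]_q$; after clearing denominators the whole thing collapses to an elementary polynomial identity in the variables $q^{i},q^{j},q^{m},q^{n}$.

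The substantive difficulty here is not conceptual but purely organizational: there are six index-shifted $q$-binomial terms to keep aligned, and the powers of $q$ attached to each must be tracked exactly for the cancellation to come out. Since the reduced statement is a finite identity among $q$-binomial coefficients that is uniform in $(m,n,i,j)$, I expect to confirm it by a short hand manipulation via Lemma \ref{lem:binomial}, or, as the text remarks, by direct symbolic verification; no tools beyond the Pascal recurrences are needed.
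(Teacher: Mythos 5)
Your proposal is correct and follows the paper's own proof essentially verbatim: the paper likewise extracts the coefficient of $a^j b^{n+i} c^{n-j} d^{m-i}$ from \eqref{eq:1}, arriving at exactly your reduced scalar identity (the paper's displayed version is yours divided by $q^2$, with identical index shifts $x(m,n-2,i+1,j-1)$, $x(m-1,n-1,i,j-1)+x(m-1,n-1,i,j)$, and $x(m-1,n-2,i,j-1)+x(m-1,n-2,i+1,j-1)$), then substitutes \eqref{eq:x} and simplifies to a trivially true rational identity in $q^i,q^j,q^m,q^n$. The only difference is cosmetic: the paper carries out the final simplification by writing the $q$-binomials explicitly and cancelling, rather than organizing the cancellation around the Pascal recurrences of Lemma \ref{lem:binomial}.
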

\begin{proof}
It suffices to prove that taking the coefficient of 
$a^j b^{n+i} c^{n-j} d^{m-i}$ in \eqref{eq:1} yields an identity.
When we take this coefficient we obtain the following equation.
\begin{align*}
&(1-q^{-n}) x(m, n, i, j)- (q^{-2}-q^{m-n}) x(m, n-2, i+1, j-1)\\
&-q^{-1} (1-q^m)\left[x(m-1, n-1, i, j)+x(m-1, n-1, i, j-1)\right]\\
&+q^{-2} (1-q^m)
\left[x(m-1, n-2, i+1, j-1)+x(m-1, n-2, i, j-1)\right]=0.
\end{align*}
Using \eqref{eq:x} now yields the equation
\begin{align*}
& q^{n\choose 2}(1-q^{-n})\left[\begin{array}{c}m\\n\end{array}\right]_q\left[\begin{array}{c}m-n\\i\end{array}\right]_q\left[\begin{array}{c}n\\j\end{array}\right]_{1/q}\\
 &-q^{n-2\choose 2}(q^{-2}-q^{m-n})\left[\begin{array}{c}m\\n-2\end{array}\right]_q\left[\begin{array}{c}m-n+2\\i+1\end{array}\right]_q\left[\begin{array}{c}n-2\\j-1\end{array}\right]_{1/q}\\
 &+q^{{n-1\choose 2}-1}(1-q^{m})\left[\begin{array}{c}m-1\\n-1\end{array}\right]_q\left[\begin{array}{c}m-n\\i\end{array}\right]_q\left(\left[\begin{array}{c}n-1\\j-1\end{array}\right]_{1/q}+
 \left[\begin{array}{c}n-1\\j\end{array}\right]_{1/q}\right)\\
 &+q^{{n-2\choose 2}-2}(1-q^{m})\left[\begin{array}{c}m-1\\n-2\end{array}\right]_q\left(\left[\begin{array}{c}m-n+1\\i\end{array}\right]_q+\left[\begin{array}{c}m-n+1\\i+1\end{array}\right]_q
 \right)\left[\begin{array}{c}n-2\\j-1\end{array}\right]_{1/q}=0.
 \end{align*}
Simplifying this, we get the equation 
\begin{align*}
& q^{2n-1}(1-q^{-n})-\frac{q^{n-1}(1-q^{m-n+2})(1-q^j)(1-q^{n-j})}{(1-q^{i+1})(1-q^{m-n+1-i})}
 -q^{2n-1}(2-q^{-j}-q^{-n+j})\\
&+q^{2n-1}(1-q^{-j})(1-q^{-n+j})\left(\frac{1}{1-q^{m-n+1-i}}+\frac{1}{1-q^{1+i}}\right)=0,
 \end{align*}
which is trivially true.
\end{proof}

\begin{lemma}\label{lem:eq2}
Equation \eqref{eq:2} is an identity.
\end{lemma}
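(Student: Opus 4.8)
The plan is to follow verbatim the template established in the proof of Lemma \ref{lem:eq1}. Since every summand of \eqref{eq:2} has total degree $n-1$ in $a,c$ and total degree $m+n-1$ in $b,d$ (as recorded just before the statement of \eqref{eq:2}), it suffices to prove that the coefficient of a single generic monomial of this bidegree vanishes. I would extract the coefficient of $a^j b^{n-1+i} c^{n-1-j} d^{m-i}$; this is the analogue of the monomial $a^j b^{n+i} c^{n-j} d^{m-i}$ used for \eqref{eq:1}, shifted down to match the lower bidegree, and one checks immediately that $j+(n-1-j)=n-1$ and $(n-1+i)+(m-i)=m+n-1$.

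First I would expand each product of a monomial prefactor with an $X_{m',n'}$ appearing in \eqref{eq:2} into a sum of coefficients $x(m',n',i',j')$ with appropriately shifted indices, using the closed form \eqref{eq:x}. Concretely, multiplication by $abcd$ lowers each of the four exponents by one before reading off $x$; each of the two terms of a factor $(a+c)$ or $(b+d)$ produces its own index shift; and factors such as $bd$, $ac(a+c)(bd)^2$, and $(abcd)^2 bd$ shift the $b,d$-indices (and, where present, the $a,c$-indices) accordingly. Carrying this out term by term rewrites the coefficient of $a^j b^{n-1+i} c^{n-1-j} d^{m-i}$ in \eqref{eq:2} as an explicit linear combination of the values $x(m,n-1,\cdot,\cdot)$, $x(m,n-3,\cdot,\cdot)$, $x(m-1,n-1,\cdot,\cdot)$, $x(m-1,n-2,\cdot,\cdot)$, $x(m-1,n-4,\cdot,\cdot)$, $x(m-2,n-1,\cdot,\cdot)$, $x(m-2,n-3,\cdot,\cdot)$, and $x(m-2,n-5,\cdot,\cdot)$.

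Next I would substitute the product formula \eqref{eq:x} for each of these values and factor out the common power of $q$ and the common $q$-binomial data. Using the three $q$-Pascal identities of Lemma \ref{lem:binomial} to merge the neighbouring $q$-binomial coefficients (exactly as in the final simplification of Lemma \ref{lem:eq1}), the whole expression collapses to a rational identity in the four quantities $q^m$, $q^n$, $q^i$, $q^j$, which reduces to $0=0$. This last step is mechanical and can be verified by hand or with any computer algebra system.

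The main obstacle is purely organizational: \eqref{eq:2} contains roughly twice as many terms as \eqref{eq:1}, and several prefactors (for instance $ac(a+c)(bd)^2$ and $(abcd)^2 bd$) spread a single $X_{m',n'}$ across several shifted $x$-values, so the bookkeeping of the index shifts $(i',j')$ and of the signs is delicate. One must also handle the boundary cases in which a shift pushes $i'$ or $j'$ outside the ranges $0\le j'\le n'$ and $0\le i'\le m'-n'$, where the relevant $q$-binomial (and hence $x$) vanishes; these degenerate monomials are accounted for automatically by the convention that out-of-range $q$-binomials are zero. Once the terms are assembled correctly, no genuinely new idea beyond Lemma \ref{lem:eq1} is required.
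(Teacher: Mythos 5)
Your proposal follows exactly the paper's own strategy: extract the coefficient of $a^j b^{n-1+i} c^{n-1-j} d^{m-i}$ in \eqref{eq:2}, rewrite it as a linear combination of the values $x(m',n',i',j')$ via \eqref{eq:x} (your list of shifted arguments matches the equation displayed in the paper), and reduce the result to a routine $q$-binomial identity using Lemma \ref{lem:binomial}. The approach is correct and essentially identical to the paper's proof, with your remark on out-of-range indices being a harmless (and sensible) addition.
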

\begin{proof}
We use the same strategy as in the proof of Lemma \ref{lem:eq1}.
Now we take the coefficient of 
$a^j b^{n-1+i} c^{n-1-j} d^{m-i}$ in \eqref{eq:2}, obtaining
the following equation.
\begin{align*}
&(q^{-n+1}-q^m)x(m, n-1, i, j)+(q^{m-2}-q^{m-n+1})x(m, n-3, i+1, j-1)\\
&+(1-q^m)(1-q^{m-1})\{x(m-2, n-1, i-1, j)-(q^{-2}+q^{m-2})x(m-2, n-3, i, j-1)\\
&+q^{m-4}x(m-2, n-5, i+1, j-2)\}-(1-q^m)\{x(m-1, n-1, i, j)\\
&+x(m-1, n-1, i-1, j)
-\frac{1+q^m}{q}\left[x(m-1, n-2, i, j-1)+x(m-1, n-2, i, j)\right]\\
&+q^{m-3}
\left[x(m-1, n-4, i+1, j-1)+x(m-1, n-4, i+1, j-2)\right]\}=0
\end{align*}

Again it is straightforward to show that this is an identity.
\end{proof}

\begin{remark} As pointed out by one of the referees, the $F_m(y)$ are deformation of Al-Salam Chihara
polynomials $Q_m(x;a,b|q)$. See for example 
\cite[Section 14.8]{KS}. To be precise,
$$
F_m(y)=(bd)^{m/2}Q_m\left(\frac{b+d}{2\sqrt{bd}}     ; \frac{ya}{\sqrt{bd}}, \frac{yc}{\sqrt{bd}}|q\right).
$$
Thanks to this observation, we can write
$$
F_m(ybd)=\frac{(y^2abcd;q)_m}{(ya)^m}\sum_{k=0}^m\frac{(q^{-m};q)_k(yad;q)_k(yab;q)_k}{(y^2abcd;q)_k(q;q)_k}q^k;
$$
with $(a;q)_k=\prod_{i=0}^{k-1}(1-aq^i)$. Using this formula, we could give an alternative proof of 
Proposition \ref{prop:F2}.
\end{remark}

\section{Koornwinder moments at  $\xi = q=1$.}\label{sec:specialize}

Our goal in this section is to 
show that when one evaluates Koornwinder moments 
at $\xi=q=1$, one obtains  a beautiful multiplicative  formula in terms of hook lengths.
This formula is given in 
Theorem \ref{thm:specialize} below.  

To make sense of this specialization, however, we need to be careful.
It is clear from Section \ref{sec:sol} that the 
Uchiyama-Sasamoto-Wadati solution to the 
Matrix Ansatz  has poles when $q=1$, which is related to the fact
that some of our formulas for Koornwinder moments 
-- in particular Theorem \ref{thm:main} -- are only valid
for $q\neq 1$.
However, 
there are other solutions to the Matrix Ansatz which are well-defined
at $q=1$.  We will review one such solution here, 
use it to define the partition
function and Koornwinder moments (which are the same as our previous 
definitions, up to a global scalar factor), and then state and prove
Theorem \ref{thm:specialize}.  

In this section we will abbreviate 
$K_{\lambda}(1; \alpha, \beta, \gamma, \delta; 1)$
by writing $K_{\lambda}$.

\subsection{The Matrix Ansatz at $q=1$}
We give now a solution to 
the Matrix Ansatz at $q=1$, which comes from 
\cite[Section 5]{USW}.
We define tridiagonal matrices $\D$ and $\E$, and vectors
$\llangle W\vert$ and $\vert V\rrangle$ by 
\begin{eqnarray}\label{NewD}
\D=\left[
\begin{array}{cccc}
\DD_0^\natural  & \DD_0^\sharp  & 0             & \cdots\\
\DD_0^\flat     & \DD_1^\natural& \DD_1^\sharp  & {}\\
0               & \DD_1^\flat   & \DD_2^\natural        & \ddots\\
\vdots          & {}            & \ddots        & \ddots
\end{array}
\right] ,
&&\qquad 
\E=\left[
\begin{array}{cccc}
\EE_0^\natural  & \EE_0^\sharp  & 0             & \cdots\\
\EE_0^\flat     & \EE_1^\natural& \EE_1^\sharp  & {}\\
0               & \EE_1^\flat   & \EE_2^\natural& \ddots\\
\vdots          & {}            & \ddots        & \ddots
\end{array}
\right],
\label{eqn:repDE3}
\end{eqnarray}
\begin{eqnarray*}
&&\llangle W\vert =(1,0,0,\cdots ), \qquad
\vert V\rrangle =(1,0,0,\cdots )^T, 
\label{eqn:repWV3}
\end{eqnarray*}
where
\begin{eqnarray*}
&&\DD^\natural_n=
\frac{\alpha+\delta+n(\alpha\beta+2\alpha\delta+\gamma\delta)}
{(\alpha+\gamma)(\beta+\delta)},
\qquad
\EE^\natural_n=
\frac{\beta+\gamma+n(\alpha\beta+2\beta\gamma+\gamma\delta)}
{(\alpha+\gamma)(\beta+\delta)},
\nonumber\\
&&\DD^\sharp_n=
\frac{\alpha}{\alpha+\gamma}
\left[(n+1)(x+n)\right]^{1/2},
\qquad
\EE^\sharp_n=
\frac{\gamma}{\alpha+\gamma}
\left[(n+1)(x+n)\right]^{1/2},
\nonumber\\
&&\DD^\flat_n=
\frac{\delta}{\beta+\delta}
\left[(n+1)(x+n)\right]^{1/2},
\qquad
\EE^\flat_n=
\frac{\beta}{\beta+\delta}
\left[(n+1)(x+n)\right]^{1/2},
\end{eqnarray*}
with
\begin{eqnarray*}
x = \frac{\alpha+\beta+\gamma+\delta}{(\alpha+\gamma)(\beta+\delta)}.
\end{eqnarray*}

We now use the above solution to the Matrix Ansatz to 
define the partition function and Koornwinder moments at $q=\xi = 1$.

We define the following notation:
\begin{equation*}
S = \frac{(\alpha+\gamma)(\beta+\delta)}{\alpha \beta - \gamma \delta}.
\end{equation*}

\begin{definition}
When $q=1$, we define the partition function of the ASEP
by $Z_N = S^N \langle W | (\D+\E)^N | V \rangle.$\footnote{We have
included the constant $S^N$ in the definition of the partition function
so that this definition of $Z_N$ equals the specialization
of the fugacity partition function $Z_N(\xi)$ from Definition \ref{def:ZN} at 
$\xi = q=1$.  This follows from \cite[Equation 6.16]{USW} and \cite[Theorem 4.1]{CSSW}.}
And we define the Koornwinder moment $K_{\lambda}$ by
\eqref{def:moment}, after setting $\xi=1$ and using 
the above definition of $Z_N$. 
\end{definition}

Let $\mathcal C = (c_{ij})$ be the tridiagonal matrix 
defined by 
\begin{align*}
c_{i,i+1} &= 1 \\
c_{i,i} &= S(D_i^{\natural}+E_i^{\natural}) = S(x+2i) \\
c_{i,i-1} &= S^2(D_{i-1}^{\flat}+E_{i-1}^{\flat})(D_{i-1}^{\sharp}+E_{i-1}^{\sharp}) =S^2  i (x-1+i).
\end{align*}

\begin{lemma}
We have that $Z_N = 
\langle W| {\mathcal C}^N | V \rangle$.
\end{lemma}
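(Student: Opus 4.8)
The plan is to identify both sides as generating functions for weighted Motzkin paths and then to exploit the fact that such a generating function is unchanged under a diagonal gauge transformation of the underlying tridiagonal matrix. First I would absorb the scalar into the matrix, writing $Z_N = S^N\langle W|(\D+\E)^N|V\rangle = \langle W|\bigl(S(\D+\E)\bigr)^N|V\rangle$, so that it suffices to prove $\langle W|\bigl(S(\D+\E)\bigr)^N|V\rangle = \langle W|\mathcal C^N|V\rangle$, i.e.\ to compare the two tridiagonal matrices $S(\D+\E)$ and $\mathcal C$.

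The key computation is that these two matrices have the same diagonal entries and the same products of matched off-diagonal entries. From the formulas for the entries of $\D$ and $\E$ in \eqref{NewD}, one checks the two routine identities $\DD_n^\natural+\EE_n^\natural = x+2n$ and $(\DD_n^\sharp+\EE_n^\sharp)(\DD_n^\flat+\EE_n^\flat) = (n+1)(x+n)$ (the $\sharp$ weights sum to $[(n+1)(x+n)]^{1/2}$, as do the $\flat$ weights, once the $\alpha,\gamma$ and $\beta,\delta$ prefactors are combined over their common denominators). Consequently the $(i,i)$ entry of $S(\D+\E)$ is $S(x+2i)=c_{i,i}$, while the product of its $(i,i+1)$ and $(i+1,i)$ entries equals $S^2(i+1)(x+i)$, which is exactly $c_{i,i+1}\,c_{i+1,i} = 1\cdot S^2(i+1)(x+i)$.

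With these matches in hand I would finish in one of two equivalent ways. The clean algebraic route is to conjugate by the diagonal matrix $\Lambda=\diag(\lambda_0,\lambda_1,\dots)$ with $\lambda_0=1$ and $\lambda_{i+1}/\lambda_i = \bigl(S(\DD_i^\sharp+\EE_i^\sharp)\bigr)^{-1}$: this normalizes every superdiagonal entry of $S(\D+\E)$ to $1$, forces each subdiagonal entry to become $c_{i+1,i}$ by the product identity above, and leaves the diagonal fixed, so that $\mathcal C = \Lambda^{-1}\,S(\D+\E)\,\Lambda$; since $\langle W|$ and $|V\rangle$ are the coordinate (co)vector $e_0$ and $\lambda_0=1$, conjugation preserves the matrix element and we get $\langle W|\mathcal C^N|V\rangle = Z_N$. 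Alternatively, and more in the spirit of this paper, one can argue combinatorially via Lemma \ref{lem:Motzkin1}: both sides are sums over Motzkin paths of length $N$ from height $0$ to height $0$, and in any such path the number of up-steps leaving level $i$ equals the number of down-steps entering level $i$, so the weight of each path depends only on the diagonal entries and on the products $c_{i,i+1}c_{i+1,i}$, all of which agree. The only point requiring care is the index bookkeeping for the $\flat$/$\sharp$ entries (both are labeled by the lower of the two adjacent levels); beyond that I anticipate no real obstacle, since the entire argument rests on the elementary observation that a balanced Motzkin path never sees the individual off-diagonal weights, only their products.
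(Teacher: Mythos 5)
Your proof is correct and, in its combinatorial form, coincides with the paper's own argument: the paper's proof is precisely the one-line observation that a Motzkin path from height $0$ back to height $0$ uses each up-step across a level exactly as many times as the matching down-step, so the generating function depends only on the diagonal entries and the products $c_{i,i+1}c_{i+1,i}$, which agree for $\mathcal{C}$ and $S(\D+\E)$ by construction. The diagonal-conjugation argument you give first is just an algebraic repackaging of the same invariance, so there is no substantive difference in approach.
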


\begin{proof}
Note that for every step from height $i$ to height 
$i+1$ in a Motzkin path,
there must be a corresponding step from height $i+1$ to height $i$
in the Motzkin path.  It follows that 
$\langle W| {\mathcal C}^N | V \rangle = S^N
\langle W| (\D + \E)^N | V \rangle$.
\end{proof}

\begin{proposition}\label{prop:1}
We have that 
$$K_{(N-r,0,0,\dots,0)} = S^{N-r}{N \choose r} \prod_{i=r}^{N-1} (x+i)=\frac{1}{(\alpha\beta-\gamma\delta)^{N-r}}{N\choose r}\prod_{i=r}^{N-1}(\alpha+\beta+\gamma+\delta+i(\alpha+\gamma)(\beta+\delta)),$$
where there are precisely $r$ $0$'s in $(N-r,0,0,\dots,0)$.
\end{proposition}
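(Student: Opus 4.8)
The plan is to reduce the computation to counting weighted partial Motzkin paths and then verify the claimed product formula by induction on $N$. First I would invoke the lemma immediately preceding the statement, which asserts $Z_N = \langle W| \C^N | V\rangle$ for the explicit tridiagonal matrix $\C = (c_{ij})$ with $c_{i,i+1} = 1$, $c_{i,i} = S(x+2i)$, and $c_{i,i-1} = S^2 i (x-1+i)$. Since $K_{(N-r,0,\dots,0)}$ is defined by \eqref{def:moment} (at $\xi = 1$) as the same ratio of determinants that defines $\K_{(N-r,0,\dots,0)}$ in \eqref{eq:genmoment}, but with $\Z_N$ replaced by $Z_N = \langle W|\C^N|V\rangle$, Theorem \ref{thm:det-Motzkin} applies verbatim and gives $K_{(N-r,0,\dots,0)} = \frac{1}{k_r}\C\Motz(N,r)$. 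Here $k_r = \prod_{i=0}^{r-1} c_{i,i+1} = 1$, so in fact $K_{(N-r,0,\dots,0)} = \C\Motz(N,r)$, the generating function for $\C$-weighted partial Motzkin paths from $(0,0)$ to $(N,r)$.

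Next I would set up a recurrence for $\C\Motz(N,r)$ by conditioning on the last step of such a path. That step arrives at height $r$ and is either an up step from height $r-1$ (weight $c_{r-1,r} = 1$), a level step at height $r$ (weight $c_{r,r} = S(x+2r)$), or a down step from height $r+1$ (weight $c_{r+1,r} = S^2(r+1)(x+r)$). This yields
\begin{equation*}
\C\Motz(N,r) = \C\Motz(N-1,r-1) + S(x+2r)\,\C\Motz(N-1,r) + S^2(r+1)(x+r)\,\C\Motz(N-1,r+1),
\end{equation*}
with base values $\C\Motz(0,0) = 1$ and $\C\Motz(0,r) = 0$ for $r \neq 0$, together with the conventions $\C\Motz(N,r) = 0$ whenever $r < 0$ or $r > N$.

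It then remains to check that $g(N,r) := S^{N-r}\binom{N}{r}\prod_{i=r}^{N-1}(x+i)$ satisfies the same recurrence and base cases, which gives the first equality by induction on $N$. Substituting $g$ into the recurrence and dividing through by the common factor $S^{N-r}$, I would factor out the common product $\prod_{i=r+1}^{N-2}(x+i)$ and an overall factor of $(x+r)$; the recurrence then collapses to the polynomial identity
\begin{equation*}
\binom{N}{r}(x+N-1) = (x+r-1)\binom{N-1}{r-1} + (x+2r)\binom{N-1}{r} + (r+1)\binom{N-1}{r+1}.
\end{equation*}
Comparing coefficients of $x$ gives Pascal's rule $\binom{N}{r} = \binom{N-1}{r-1} + \binom{N-1}{r}$, while equating the constant terms reduces, via $k\binom{n}{k} = n\binom{n-1}{k-1}$, to $(N-1)\bigl[\binom{N-2}{r-2} + 2\binom{N-2}{r-1} + \binom{N-2}{r}\bigr] = (N-1)\binom{N}{r}$, which follows from two applications of Pascal's rule. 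This bookkeeping with the shifted products is the only delicate part, but it is purely mechanical and carries no hidden obstruction.

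Finally, the second equality in the statement is pure algebra: using $S = \frac{(\alpha+\gamma)(\beta+\delta)}{\alpha\beta-\gamma\delta}$ and $x = \frac{\alpha+\beta+\gamma+\delta}{(\alpha+\gamma)(\beta+\delta)}$ one computes $S(x+i) = \frac{\alpha+\beta+\gamma+\delta + i(\alpha+\gamma)(\beta+\delta)}{\alpha\beta-\gamma\delta}$, and distributing the $N-r$ copies of $S$ across the $N-r$ factors of $\prod_{i=r}^{N-1}(x+i)$ turns $S^{N-r}\prod_{i=r}^{N-1}(x+i)$ into $\frac{1}{(\alpha\beta-\gamma\delta)^{N-r}}\prod_{i=r}^{N-1}(\alpha+\beta+\gamma+\delta+i(\alpha+\gamma)(\beta+\delta))$, as claimed.
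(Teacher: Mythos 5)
Your proposal is correct and follows essentially the same route as the paper: identify $K_{(N-r,0,\dots,0)}$ with $\mathcal{C}\Motz(N,r)$ via Theorem \ref{thm:det-Motzkin} (using $k_r=\prod c_{i,i+1}=1$), then use the last-step recurrence for partial Motzkin paths. The paper stops at ``This implies the result,'' so your induction verifying that $S^{N-r}\binom{N}{r}\prod_{i=r}^{N-1}(x+i)$ satisfies the recurrence (including the reduction to the $q=1$ binomial identity) simply fills in the details the paper leaves to the reader.
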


\begin{proof}
If we apply Theorem \ref{thm:det-Motzkin}
and use the fact that each $c_{i,i+1} = 1$, we get 
\begin{equation}
\mathcal{C}\Motz(N,r)
=  
{\langle W|  \mathcal{C}^N | V^r\rangle} = 
K_{(N-r,0,0,\dots,0)}.
\end{equation}
But now note that there is a simple recurrence for 
partial Motzkin paths: 
\begin{eqnarray*}
\mathcal{C}\Motz(N,r)
&=&  
\mathcal{C}\Motz(N-1,r) c_{r,r} +
\mathcal{C}\Motz(N-1,r-1) c_{r-1,r} + 
\mathcal{C}\Motz(N-1,r+1) c_{r+1,r} \\
&=&  
\mathcal{C}\Motz(N-1,r) S (x+2r) +
\mathcal{C}\Motz(N-1,r-1)  +\\ 
&& 
\mathcal{C}\Motz(N-1,r+1) S^2 (r+1)(x+r). 
\end{eqnarray*}
This implies the result.
\end{proof}

\subsection{Koornwinder moments at $q=\xi = 1$}

We identify each partition $\lambda = (\lambda_1,\dots,\lambda_n)$
with its Young diagram, namely a left-justified array of 
cells with $n$ rows and $\lambda_i$ cells in the $i$th row
for $1 \leq i \leq n$.  For each cell $z$ of the Young diagram 
in position $(i,j)$, the \emph{hook} $H_{\lambda}(z)$
is the set of cells $(a,b)$ such that $a=i$ and 
$b \geq j$, or $a\geq i$ and $b=j$.  The \emph{hook length}
$h_{\lambda}(z)$ is the cardinality of $H_{\lambda}(z)$.

Theorem \ref{thm:specialize} is the main result of this section.
\begin{theorem}\label{thm:specialize}

For any partition $\lambda = (\lambda_1,\dots, \lambda_n)$,
we have 
\begin{equation*}
K_{\lambda} = S^{|\lambda|} \prod_{z\in \lambda}
 (x+h(z)-1) \cdot \prod_{i=1}^{n-1} \prod_{j=i+1}^n 
\frac{(x+\lambda_i-\lambda_j+j-i-1)(\lambda_i-\lambda_j+j-i)}
{(x+j-i-1)(j-i)}.
\end{equation*}
\label{qegal1}
\end{theorem}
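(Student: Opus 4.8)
The plan is to combine the Jacobi–Trudi formula of Corollary~\ref{thm:JT} with the explicit evaluation of the complete homogeneous moments in Proposition~\ref{prop:1}, and then to recognize the resulting determinant as a Vandermonde determinant dressed by row and column factors. First I would apply Corollary~\ref{thm:JT} at $\xi=q=1$ to write $K_{\lambda}=\det(K_{(\lambda_i+j-i,0,\dots,0)})_{i,j=1}^n$, where the $(i,j)$ entry has precisely $n-j$ zeros. This is legitimate because at $q=1$ we have $Z_N=\langle W|\mathcal{C}^N|V\rangle$ for the tridiagonal matrix $\mathcal{C}$ of Section~\ref{sec:specialize}, so Theorem~\ref{thm:JT0} applies verbatim. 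Substituting Proposition~\ref{prop:1} with $N=a_i:=\lambda_i+n-i$ and $r=n-j$, and writing $(x)_m=\prod_{k=0}^{m-1}(x+k)$ for the rising factorial, the $(i,j)$ entry becomes
\[
S^{\lambda_i+j-i}\binom{a_i}{n-j}\prod_{k=n-j}^{a_i-1}(x+k)
= S^{\lambda_i+j-i}\cdot\frac{1}{(n-j)!\,(x)_{n-j}}\cdot(x)_{a_i}\cdot\frac{a_i!}{(a_i-(n-j))!}.
\]

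Next I would extract the scalar factors. The power of $S$ in every term of the determinant expansion is $\sum_i S^{\lambda_i+\sigma(i)-i}=S^{|\lambda|}$, so $S^{|\lambda|}$ pulls out uniformly. After extracting the column factor $\frac{1}{(n-j)!\,(x)_{n-j}}$ and the row factor $(x)_{a_i}$, I am left with $\det\!\big(\tfrac{a_i!}{(a_i-(n-j))!}\big)_{i,j=1}^n$. Here the $(i,j)$ entry is the falling factorial $a_i(a_i-1)\cdots(a_i-(n-j)+1)$, a monic polynomial of degree $n-j$ in $a_i$, which vanishes exactly when $\lambda_i+j-i<0$, consistently with the Jacobi–Trudi convention. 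By standard column reduction (subtracting lower-degree columns, which lie to the right), this determinant equals the Vandermonde determinant $\det(a_i^{\,n-j})_{i,j=1}^n=\prod_{1\le i<j\le n}(a_i-a_j)$. Collecting everything gives
\[
K_{\lambda}=S^{|\lambda|}\cdot\frac{\prod_{i=1}^n (x)_{a_i}}{\prod_{c=0}^{n-1} c!\,(x)_c}\cdot\prod_{1\le i<j\le n}(a_i-a_j).
\]

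The final step is to match this with the hook-length formula. I would use the classical beta-number description of hook lengths: with $a_i=\lambda_i+n-i$, the multiset of hook lengths of $\lambda$ is $\bigsqcup_{i=1}^n\big(\{1,\dots,a_i\}\setminus\{a_i-a_j:i<j\}\big)$, which yields $\prod_{z\in\lambda}(x+h(z)-1)=\frac{\prod_i (x)_{a_i}}{\prod_{i<j}(x+a_i-a_j-1)}$. Combined with the elementary identities $\prod_{c=0}^{n-1}c!=\prod_{i<j}(j-i)$ and $\prod_{c=0}^{n-1}(x)_c=\prod_{i<j}(x+j-i-1)$, together with the substitution $a_i-a_j=\lambda_i-\lambda_j+j-i$, the displayed product converts into exactly the right-hand side of Theorem~\ref{thm:specialize}.

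The routine parts are the factoring of $S^{|\lambda|}$ and the Vandermonde reduction. The main obstacle is the last step: one must correctly invoke the beta-number identity for hook lengths (which I would either verify directly on the $\{a_i\}$ or cite from Macdonald) and check the two elementary product identities; once these are in hand, the substitutions are purely formal and complete the proof.
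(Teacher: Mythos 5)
Your proposal is correct, but it evaluates the Jacobi--Trudi determinant by a genuinely different route than the paper. Both arguments start the same way: apply Corollary~\ref{thm:JT} at $\xi=q=1$ and substitute the explicit entries $S^{\lambda_i+j-i}\binom{\lambda_i+n-i}{n-j}\prod_{\ell=n-j}^{\lambda_i+n-i-1}(x+\ell)$ furnished by Proposition~\ref{prop:1} and Lemma~\ref{lem:q=1}. From there, the paper never evaluates the $n\times n$ determinant in one shot: it proves two reduction lemmas (Lemma~\ref{Lem:1}, which strips a trailing zero part, and Lemma~\ref{Lem:2}, which subtracts $1$ from every part), each obtained by pulling row and column factors out of the Jacobi--Trudi matrix and using multilinearity and the alternating property, and then proves the theorem by induction on $\lambda$, absorbing the hook-length bookkeeping into the easy factorization $\prod_{z\in \lambda}(x+h(z)-1) = \prod_{i=1}^n (x+\lambda_i+n-i) \cdot \prod_{z\in (\lambda_1-1,\ldots ,\lambda_n-1)}(x+h(z)-1)$. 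You instead do a one-shot evaluation: extract $S^{|\lambda|}$ together with row factors $(x)_{a_i}$ and column factors $1/\bigl((n-j)!\,(x)_{n-j}\bigr)$, reduce the residual falling-factorial matrix to the Vandermonde $\prod_{i<j}(a_i-a_j)$ by column operations, and then invoke the beta-number description of the hook multiset to rewrite $\prod_i (x)_{a_i}\big/\prod_{i<j}(x+a_i-a_j-1)$ as $\prod_{z\in\lambda}(x+h(z)-1)$. Your route is shorter and makes transparent that the formula is a ``dressed Vandermonde,'' at the cost of importing one classical external fact (the beta-number/hook-length identity, e.g.\ Macdonald, Ch.~I, \S1, Example~1), which you correctly flag as the step requiring care; the paper's induction is longer and requires a case analysis, but is entirely self-contained and never needs that identity. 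One point you handled correctly but should keep explicit in a final write-up: entries of the Jacobi--Trudi matrix with $\lambda_i+j-i<0$ must be interpreted as zero, which is consistent both with the partial-Motzkin-path interpretation of Theorem~\ref{thm:det-Motzkin} (since $\Motz(N,r)=0$ for $N<r$) and with the vanishing of your falling factorial, so the formula of Proposition~\ref{prop:1} may legitimately be used for every entry of the matrix.
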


To prove Theorem \ref{thm:specialize},
we need a few lemmas.

\begin{lemma}\label{Lem:1}
Let $\lambda = (\lambda_1,\dots,\lambda_m)$ and 
$\nu = (\lambda_1,\dots,\lambda_m,0)$.  Then 
\begin{equation*}
K_{\nu} = K_{\lambda} \cdot \prod_{i=1}^m \frac{(x+\lambda_i+m-i)(\lambda_i+m-i+1)}{(x+i-1)(i)}.
\end{equation*}
\label{lemnu}
\end{lemma}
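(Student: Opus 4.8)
The plan is to reduce everything to the explicit product formula $Z_N = S^N (x)_N$, where $(x)_N = x(x+1)\cdots(x+N-1)$ denotes the rising factorial; this follows from Proposition \ref{prop:1} with $r=0$ (equivalently, from the evaluation of $Z_N$ carried out in the proof of Lemma \ref{lem:q=1}). Substituting this into the determinantal definition \eqref{def:moment} of $K_\lambda$ and of $K_\nu$, I would first factor the powers of $S$ out of each row and column. Since $|\nu|=|\lambda|$, these scalar factors are identical for $K_\nu$ and $K_\lambda$ and cancel in the ratio, so it suffices to evaluate ratios of determinants whose entries are rising factorials $(x)_N$.

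The key computation is a closed-form evaluation of the determinant $\det_{1\le i,j\le m}\big((x)_{\lambda_i+2m-i-j}\big)$. Writing $p_i = \lambda_i+m-i$, the $(i,j)$ entry is $(x)_{p_i+(m-j)}$, so after reversing the columns the matrix has entries $(x)_{p_i+\ell}$ with $0\le \ell\le m-1$. Using $(x)_{p+\ell}=(x)_p\,(x+p)_\ell$, I would factor $(x)_{p_i}$ out of row $i$; the remaining matrix $\big((x+p_i)_\ell\big)$ is related to the Vandermonde matrix in the variables $y_i=x+p_i$ by a unitriangular change of basis, since each $(y)_\ell$ is monic of degree $\ell$. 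Hence its determinant is $\prod_{i<i'}(p_{i'}-p_i)$, giving
\begin{equation*}
\det_{1\le i,j\le m}\big((x)_{\lambda_i+2m-i-j}\big) = (-1)^{\binom m2}\prod_{i=1}^m (x)_{p_i}\prod_{1\le i<i'\le m}(p_{i'}-p_i),
\end{equation*}
with $p_i=m-i$ in the empty-partition case appearing in the denominator of $K_\lambda$. The column-reversal sign $(-1)^{\binom m2}$ occurs identically in numerator and denominator of each $K$, so it cancels.

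Finally I would apply this evaluation to both $\lambda$ (with $m$ parts, $p^\lambda_i=\lambda_i+m-i$) and $\nu=(\lambda,0)$ (with $m+1$ parts, so $p^\nu_i=p^\lambda_i+1$ for $i\le m$ and $p^\nu_{m+1}=0$), and form the quotient $K_\nu/K_\lambda$. The rising-factorial factors contribute $\prod_{i=1}^m (x+p^\lambda_i)=\prod_{i=1}^m (x+\lambda_i+m-i)$; in the Vandermonde products the pairs with $i<j\le m$ cancel against those of $K_\lambda$, while the pairs involving the new index $m+1$ contribute $(-1)^m\prod_{i=1}^m(\lambda_i+m-i+1)$; and the empty-partition pieces contribute $1/(x)_m$ together with $1/\big((-1)^m m!\big)$. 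Collecting terms, the two signs cancel and one is left with $\dfrac{\prod_{i=1}^m (x+\lambda_i+m-i)(\lambda_i+m-i+1)}{(x)_m\, m!}$, which is exactly the claimed product once one observes that $(x)_m\, m! = \prod_{i=1}^m (x+i-1)\cdot\prod_{i=1}^m i$. The only substantive step is the determinant evaluation above; after it, the main thing to watch is the bookkeeping of the uniform shift $p^\nu_i=p^\lambda_i+1$ and of the signs produced by the extra row and column, so that the quotient telescopes cleanly.
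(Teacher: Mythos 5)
Your proof is correct, and it takes a genuinely different route from the paper's. The paper deduces the lemma from the Jacobi--Trudi formula (Corollary \ref{thm:JT}) combined with the explicit hook-moment evaluation (Lemma \ref{lem:q=1}, via Proposition \ref{prop:1}): this writes $K_{\nu}$ as an $(m+1)\times(m+1)$ determinant $\det(M_{ij})$ in the complete homogeneous moments, whose last row is $(0,\dots,0,1)$, so it collapses to an $m\times m$ determinant; the paper then observes the entrywise proportionality
$M'_{ij} = \frac{(m+1-j)(x+m-j)}{(\lambda_i+m+1-i)(x+\lambda_i+m-i)}\,M_{ij}$
between this matrix and the one computing $K_{\lambda}$, and since the factor splits as (row factor)$\times$(column factor) it pulls out of the determinant, giving the claimed ratio with no determinant ever evaluated. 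You instead bypass Corollary \ref{thm:JT} entirely: you work with the defining Hankel-type ratio \eqref{def:moment}, insert $Z_N = S^N (x)_N$ (Proposition \ref{prop:1} at $r=0$), and evaluate all four determinants in closed form via the rising-factorial-to-Vandermonde reduction $(x)_{p+\ell}=(x)_p\,(x+p)_\ell$ plus a unitriangular change of basis; I checked your sign bookkeeping (the column-reversal signs cancel within each $K$, and the two $(-1)^m$'s from the new index $m+1$ cancel against each other) and the telescoping of the quotient, and it all holds, with non-circular use of Proposition \ref{prop:1}. The trade-off: the paper's argument is shorter given the machinery already in place and needs only an entrywise comparison, whereas your closed-form evaluation actually proves more than the lemma --- it yields a product formula for $K_{\lambda}$ at $q=\xi=1$ for arbitrary $\lambda$, which is essentially Theorem \ref{thm:specialize} itself up to rewriting in terms of hook lengths, so under your approach the inductive scaffolding of Lemmas \ref{Lem:1} and \ref{Lem:2} could be dispensed with altogether.
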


\begin{proof}
By Corollary \ref{thm:JT} and Proposition \ref{prop:1}, we have that 
$$K_{\nu} = \det(M_{ij})_{i,j=1}^{m+1},$$ where 
$$M_{ij} = S^{\nu_i+j-i} {\nu_i+m+1-i \choose m+1-j}
\prod_{\ell = m+1-j}^{\nu_i+m-i} (x+\ell).$$
Note that 
$$M_{ij} = S^{\lambda_i+j-i} {\lambda_i+m+1-i \choose m+1-j}
\prod_{\ell = m+1-j}^{\lambda_i+m-i} (x+\ell)$$ for $i \neq m+1$,
and 
$$M_{m+1,j} = S^{j-m+1} {0 \choose m+1-j} = 
\begin{cases}
1 \hspace{.5cm} \text{ if } j=m+1\\
0 \hspace{.5cm} \text{ otherwise.}
\end{cases} 
$$
Therefore $$\det(M_{ij})_{i,j=1}^{m+1} = \det(M_{ij})_{i,j=1}^m.$$

Meanwhile we have that 
$$K_{\lambda} = \det(M'_{ij})_{i,j=1}^{m},$$ where 
$$M'_{ij} = S^{\lambda_i+j-i} {\lambda_i+m-i \choose m-j}
\prod_{\ell = m-j}^{\lambda_i+m-i-1} (x+\ell).$$

Clearly $M'_{ij} = \frac{(m+1-j)(x+m-j)}{(\lambda_i+m+1-i)(x+\lambda_i+m-i)}
\cdot M_{ij}.$
Therefore 
\begin{align*}
K_{\lambda} &= \det\left(\frac{(m+1-j)(x+m-j)}{(\lambda_i+m+1-i)(x+\lambda_i+m-i)}
\cdot M_{ij} \right)_{i,j=1}^m \\
&= \frac{ \prod_{j=1}^m (m+1-j)(x+m-j)}{\prod_{i=1}^m (\lambda_i+m+1-i)(x+\lambda_i+m-i)} \det(M_{ij})_{i,j=1}^m\\
&= \frac{ \prod_{j=1}^m (j)(x+j-1)}{\prod_{i=1}^m (\lambda_i+m+1-i)(x+\lambda_i+m-i)} K_{\nu}.
\end{align*}
This proves the lemma.
\end{proof}

\begin{lemma}\label{Lem:2}
Let $\lambda = (\lambda_1,\dots,\lambda_n)$ and 
$\nu = (\lambda_1+1,\dots,\lambda_n+1)$.  Then 
\begin{equation*}
K_{\nu} = K_{\lambda} S^n \cdot \prod_{i=1}^n (x+\lambda_i+n-i).
\end{equation*}
\label{lemmu}
\end{lemma}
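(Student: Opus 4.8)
The plan is to mirror the proof of Lemma \ref{Lem:1}: express both $K_\nu$ and $K_\lambda$ as determinants via the Jacobi-Trudi formula, relate the two matrices entrywise, and reduce the computation to a determinant-preserving column operation. First I would invoke Corollary \ref{thm:JT} together with Lemma \ref{lem:q=1} and Proposition \ref{prop:1} to write $K_\lambda = \det(M_{ij})_{i,j=1}^n$ and $K_\nu = \det(N_{ij})_{i,j=1}^n$. For the entry in column $j$ the relevant one-row-plus-zeros partition has first part $\lambda_i+j-i$ and $n-j$ zeros, so with $N=\lambda_i+n-i$,
$$M_{ij} = S^{\lambda_i+j-i}{\lambda_i+n-i \choose n-j}\prod_{\ell=n-j}^{\lambda_i+n-i-1}(x+\ell),$$
and $N_{ij}$ is the same expression with $\lambda_i$ replaced by $\nu_i=\lambda_i+1$.

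Next I would compare the two matrices entry by entry. A direct computation gives
$$\frac{N_{ij}}{M_{ij}} = S\cdot\frac{{\lambda_i+n-i+1 \choose n-j}}{{\lambda_i+n-i \choose n-j}}\cdot(x+\lambda_i+n-i) = S\,\frac{\lambda_i+n-i+1}{\lambda_i+j-i+1}\,(x+\lambda_i+n-i),$$
where the factor $S$ and the trailing factor $(x+\lambda_i+n-i)$ come from the shift $\lambda_i\mapsto\lambda_i+1$ in the power of $S$ and in the rising factorial, and the middle ratio of binomials is simplified using ${p+1 \choose k}\big/{p \choose k} = (p+1)/(p+1-k)$. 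Since $S(x+\lambda_i+n-i)$ depends only on the row index $i$, I would factor it out of each row, obtaining $K_\nu = S^n\prod_{i=1}^n(x+\lambda_i+n-i)\cdot\det(\tilde M_{ij})$, where $\tilde M_{ij} = \frac{\lambda_i+n-i+1}{\lambda_i+j-i+1}M_{ij}$.

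The crux is then to show that this residual column rescaling leaves the determinant unchanged, that is $\det(\tilde M_{ij}) = \det(M_{ij}) = K_\lambda$. Writing $p_i=\lambda_i+n-i$ and $k=n-j$, the rescaling factor is $\tfrac{p_i+1}{p_i+1-k}$, and the algebraic identity $\frac{p+1}{p+1-k}{p \choose k} = {p+1 \choose k}$ followed by Pascal's rule ${p+1 \choose k} = {p \choose k} + {p \choose k-1}$ lets me rewrite
$$\tilde M_{ij} = M_{ij} + \frac{1}{S(x+n-j-1)}\,M_{i,j+1}.$$
This says that column $j$ of $\tilde M$ equals column $j$ of $M$ plus a scalar (depending only on $j$) times column $j+1$ of $M$, with column $n$ left unchanged; equivalently $\tilde M = MT$ for a lower-bidiagonal unipotent matrix $T$, so $\det\tilde M = \det M\cdot\det T = \det M$. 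I expect the main obstacle to be precisely spotting and verifying this column identity (in particular the bookkeeping of the binomial and rising-factorial shifts); once it is in hand, combining the steps yields $K_\nu = K_\lambda\,S^n\prod_{i=1}^n(x+\lambda_i+n-i)$, as claimed.
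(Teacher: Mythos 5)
Your overall strategy is the paper's strategy (Jacobi--Trudi determinants with entries from Proposition \ref{prop:1}, Pascal's rule, and a unipotent column operation), but one step fails: the entrywise ratio $N_{ij}/M_{ij}$ does not exist at every entry. The simplification $\binom{p+1}{k}\big/\binom{p}{k}=(p+1)/(p+1-k)$ breaks down exactly when $k=p+1$, i.e.\ when $\lambda_i+j-i=-1$; at such an entry $M_{ij}=0$ while $N_{ij}\neq 0$, so \emph{no} finite scalar relates the two, and your formula has the vanishing factor $\lambda_i+j-i+1$ in its denominator. Concretely, take $\lambda=(1,1,1)$, $\nu=(2,2,2)$, $n=3$, $(i,j)=(3,1)$: then $M_{31}=K_{(-1,0,0)}=0$ (the binomial $\binom{1}{2}$ vanishes) but $N_{31}=K_{(0,0,0)}=1$. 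Such entries are not exotic: they occur whenever some part satisfies $\lambda_i\le i-2$, in particular for partitions with trailing zeros or repeated small parts, which are exactly the ones this lemma is applied to in the induction proving Theorem \ref{qegal1} (e.g.\ $\lambda=(0,\dots,0)$). Consequently your matrix $\tilde M_{ij}=\frac{\lambda_i+n-i+1}{\lambda_i+j-i+1}M_{ij}$ is ill-defined (or wrongly zero) at those entries, and the factorization $K_\nu=S^n\prod_i(x+\lambda_i+n-i)\det(\tilde M)$ is not valid in general. It is worth noting why ``mirroring Lemma \ref{Lem:1}'' misleads you here: in Lemma \ref{Lem:1} the two binomials being compared are $\binom{p}{k}$ and $\binom{p+1}{k+1}$, whose vanishing loci coincide, so an entrywise multiplicative comparison is legitimate there; in the present lemma they are $\binom{p}{k}$ and $\binom{p+1}{k}$, whose vanishing loci differ by exactly the problematic diagonal.

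The gap is repairable, and the repair is what the paper does: replace the multiplicative comparison by an additive one. Applying Pascal's rule $\binom{\lambda_i+1+n-i}{n-j}=\binom{\lambda_i+n-i}{n-j}+\binom{\lambda_i+n-i}{n-j-1}$ directly to the entries of $N$ gives the identity
\begin{equation*}
N_{ij} \;=\; S\,(x+\lambda_i+n-i)\,M_{ij}\;+\;\frac{x+\lambda_i+n-i}{x+n-j-1}\,M_{i,j+1},
\end{equation*}
which holds at \emph{every} entry, including the degenerate ones (there the first term vanishes because its binomial does, and the second term supplies the correct nonzero value; for $j=n$ the second term is absent since $\binom{\cdot}{-1}=0$). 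From this identity your own closing argument goes through verbatim: writing $\hat M_{ij}=S(x+\lambda_i+n-i)M_{ij}$, column $j$ of $N$ equals column $j$ of $\hat M$ plus $\frac{1}{S(x+n-j-1)}$ times column $j+1$ of $\hat M$, so $N=\hat M T$ with $T$ unipotent lower bidiagonal, whence $K_\nu=\det N=\det\hat M=S^n\prod_{i=1}^n(x+\lambda_i+n-i)\,K_\lambda$. So your Pascal/column-operation idea is the right one; the error is routing it through an entrywise ratio rather than an entrywise sum.
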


\begin{proof}
By Corollary \ref{thm:JT} and Proposition \ref{prop:1}, we have that 
$$K_{\nu} = \det(M_{ij})_{i,j=1}^{n},$$ where 
$$M_{ij} = S^{\lambda_i+1+j-i} {\lambda_i+1+n-i \choose n-j}
\prod_{\ell = n-j}^{\lambda_i+n-i} (x+\ell).$$

Meanwhile we have that 
$$K_{\lambda} = \det(M'_{ij})_{i,j=1}^{n},$$ where 
$$M'_{ij} = S^{\lambda_i+j-i} {\lambda_i+n-i \choose n-j}
\prod_{\ell = n-j}^{\lambda_i+n-i-1} (x+\ell).$$

We can now write 
\begin{align*}
M_{ij} &= S^{\lambda_i+1+j-i} \left( {\lambda_i+n-i \choose n-j} + 
{\lambda_i+n-i \choose n-j-1} \right)
\prod_{\ell = n-j}^{\lambda_i+n-i} (x+\ell) \\
 &= S M'_{ij} (x+\lambda_i + n-i) + M'_{i,j+1} \frac{x+\lambda_i+n-i}{x+n-j-1}.
\end{align*}

But now since the determinant is alternating in the columns of 
the matrix $(M_{ij})$, we have that 
\begin{align*}
K_{\nu} &= \det (M_{ij}) = 
\det(S M'_{ij} (x+\lambda_i+n-i))_{i,j=1}^n  \\
 &= S^n 
\det (M'_{ij})_{i,j=1}^n
\prod_{i=1}^n (x+\lambda_i+n-i)\\ 
&= S^n 
K_{\lambda}
\prod_{i=1}^n (x+\lambda_i+n-i). 
\end{align*}
This proves the lemma.
\end{proof}

\begin{proof} [Proof of Theorem \ref{qegal1}.] 
We start with a partition $\lambda=(\lambda_1,\dots,\lambda_n)$. We will
use induction on both $n$ and $\lambda_n$.  If $n=0$ then $K_\lambda=1$ and we are done.
Otherwise if $\lambda_n=0$, we apply Lemma \ref{lemnu} and get
\begin{eqnarray*}
K_\lambda&= & K_{(\lambda_1,\ldots ,\lambda_{n-1})} \cdot \prod_{i=1}^{n-1} \frac{(x+\lambda_i+n-1-i)(\lambda_i+n-i)}{(x+i-1)(i)}\\
&=&
K_{(\lambda_1,\ldots ,\lambda_{n-1})} \cdot \prod_{i=1}^{n-1}
\frac{(x+\lambda_i-\lambda_n+n-1-i)(\lambda_i-\lambda_n+n-i)}
{(x+n-i-1)(n-i)}
\end{eqnarray*}
By induction, we know that
$$
 K_{(\lambda_1,\ldots ,\lambda_{n-1})}= S^{|\lambda|} \prod_{z\in \lambda}
 (x+h(z)-1) \cdot \prod_{i=1}^{n-2} \prod_{j=i+1}^{n-1} 
\frac{(x+\lambda_i-\lambda_j+j-i-1)(\lambda_i-\lambda_j+j-i)}
{(x+j-i-1)(j-i)}
$$
and therefore
$$
K_\lambda=
 S^{|\lambda|} \prod_{z\in \lambda}
 (x+h(z)-1) \cdot \prod_{i=1}^{n-1} \prod_{j=i+1}^n 
\frac{(x+\lambda_i-\lambda_j+j-i-1)(\lambda_i-\lambda_j+j-i)}
{(x+j-i-1)(j-i)}.
$$
Finally if $\lambda_n>0$, we apply Lemma \ref{lemmu},
$$
K_{\lambda} = K_{(\lambda_1-1,\ldots, \lambda_n-1)}\cdot S^n \cdot \prod_{i=1}^n (x+\lambda_i+n-i).
$$
It is easy to see that 
$$
\prod_{z\in \lambda}
 (x+h(z)-1) = \prod_{i=1}^n (x+\lambda_i+n-i) \cdot \prod_{z\in (\lambda_1-1,\ldots ,\lambda_n-1)}
 (x+h(z)-1). 
$$
By induction, we know that the theorem holds for 
$
 K_{(\lambda_1-1,\ldots ,\lambda_n-1)}$,
and therefore we get the desired result for $K_\lambda$.
\end{proof}

We now explain why Conjecture \ref{conj:pos} is true when $\xi = q=1$.  First note that 
by iterating Lemmas \ref{Lem:1} and \ref{Lem:2}, we can prove the following result.

\begin{proposition}\label{prop:recurrence}
Let $\nu = (\lambda_1+1, \dots, \lambda_m+1, 0,\dots, 0)$ and 
$\lambda = (\lambda_1,\dots, \lambda_m,0,\dots,0)$, where there are $r$ $0$'s at the end of each partition.
Then $$K_{\nu} = K_{\lambda} \cdot \prod_{i=1}^m \frac{\lambda_i+m+r+1-i}{\lambda_i+m+1-i} (x+\lambda_i+m+r-i).$$
\end{proposition}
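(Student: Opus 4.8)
The plan is to factor the passage from $\lambda$ to $\nu$ through the partitions obtained by deleting the trailing zeros, applying Lemma~\ref{Lem:2} once and Lemma~\ref{Lem:1} a total of $2r$ times. Set $\mu=(\lambda_1,\dots,\lambda_m)$ and $\mu^+=(\lambda_1+1,\dots,\lambda_m+1)$, so that $\lambda$ is $\mu$ with $r$ trailing zeros appended and $\nu$ is $\mu^+$ with $r$ trailing zeros appended. The quantity to be computed then factors as
\[
\frac{K_\nu}{K_\lambda}=\frac{K_{\mu^+}}{K_\mu}\cdot\frac{K_\nu/K_{\mu^+}}{K_\lambda/K_\mu},
\]
and I would evaluate the two factors on the right separately.

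The first factor is immediate from Lemma~\ref{Lem:2} with $n=m$, giving $K_{\mu^+}/K_\mu=S^m\prod_{i=1}^m(x+\lambda_i+m-i)$. This is also the reason one cannot apply Lemma~\ref{Lem:2} to $\lambda$ directly: that lemma increments \emph{every} part, including the $r$ zeros, whereas the zeros of $\nu$ are inherited unchanged from $\lambda$. Stripping the zeros first is precisely what makes Lemma~\ref{Lem:2} applicable.

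For the second factor I would rebuild $\lambda$ from $\mu$ and $\nu$ from $\mu^+$ one trailing zero at a time. Writing $\mu^{(k)}$ for $\mu$ with $k$ trailing zeros, Lemma~\ref{Lem:1} expresses $K_{\mu^{(k+1)}}/K_{\mu^{(k)}}$ as a product over the $m+k$ parts of $\mu^{(k)}$, and I would split this product into the contribution of the $m$ genuine parts and that of the $k$ zero parts. In the ratio $\tfrac{K_\nu/K_{\mu^+}}{K_\lambda/K_\mu}=\prod_{k=0}^{r-1}\bigl(K_{(\mu^+)^{(k+1)}}/K_{(\mu^+)^{(k)}}\bigr)\big/\bigl(K_{\mu^{(k+1)}}/K_{\mu^{(k)}}\bigr)$ the universal denominators $(x+i-1)(i)$ cancel, and crucially the zero-row contributions cancel as well, since the parts of $\mu^{(k)}$ and $(\mu^+)^{(k)}$ indexed by $i>m$ are both $0$. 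What survives is $\prod_{i=1}^m\prod_{k=0}^{r-1}\tfrac{(x+\lambda_i+m+k+1-i)(\lambda_i+m+k+2-i)}{(x+\lambda_i+m+k-i)(\lambda_i+m+k+1-i)}$.

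The last step is a telescoping in $k$: for fixed $i$, with $b=\lambda_i+m-i$, the inner product $\prod_{k=0}^{r-1}\tfrac{(x+b+k+1)(b+k+2)}{(x+b+k)(b+k+1)}$ collapses to $\tfrac{(x+b+r)(b+r+1)}{(x+b)(b+1)}$. Multiplying by the first factor $S^m\prod_{i=1}^m(x+\lambda_i+m-i)$ cancels the $(x+\lambda_i+m-i)$ terms and produces the claimed product over $i=1,\dots,m$, together with the overall factor $S^m$ that is forced by $|\nu|-|\lambda|=m$. The only delicate point is the combinatorial bookkeeping in Lemma~\ref{Lem:1}, namely isolating and cancelling the zero-row factors; once that is handled, the telescoping and the final cancellation are routine.
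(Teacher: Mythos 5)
Your proof is correct and is precisely the argument the paper intends: the paper's entire proof of this proposition is the remark that it follows ``by iterating Lemmas \ref{Lem:1} and \ref{Lem:2},'' and your decomposition (strip the $r$ trailing zeros, apply Lemma \ref{Lem:2} to the $m$ genuine parts, re-append the zeros via Lemma \ref{Lem:1}, with the universal denominators and zero-row contributions cancelling and the remaining factors telescoping in $k$) is exactly that iteration carried out in full. One further point in your favor: the factor $S^m$ your computation produces is genuinely there and the statement as printed omits it --- for instance with $\lambda=(0,0)$, $\nu=(1,0)$ one has $K_{(0,0)}=1$ and $K_{(1,0)}=2S(x+1)$ by Proposition \ref{prop:1} (or Theorem \ref{thm:specialize}), whereas the printed formula gives $2(x+1)$ --- so your bookkeeping, besides being correct, also flags a typo in the proposition itself.
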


\begin{corollary}
Conjecture \ref{conj:pos} is true when $\xi = q=1$, namely the Koornwinder moments
at this specialization are polynomials in $\alpha, \beta, \gamma, \delta$ with positive coefficients.
\end{corollary}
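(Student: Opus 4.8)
The plan is to prove the stronger, explicit statement that the normalized quantity $(\alpha\beta-\gamma\delta)^{|\lambda|}K_{\lambda}$ is a polynomial in $\alpha,\beta,\gamma,\delta$ with positive (indeed positive integer) coefficients; this exhibits $(\alpha\beta-\gamma\delta)^{|\lambda|}$ as the ``normalizing factor'' of Conjecture \ref{conj:pos}. I would start not from the recurrence of Proposition \ref{prop:recurrence} but from the closed product formula of Theorem \ref{thm:specialize}, which already packages the full answer. Write $Q=(\alpha+\gamma)(\beta+\delta)$ and, for an integer $k\ge 0$, set $P_k=\alpha+\beta+\gamma+\delta+kQ$, so that $x+k=P_k/Q$ and $S=Q/(\alpha\beta-\gamma\delta)$. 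Each $P_k$ is manifestly a polynomial with positive integer coefficients, and every argument appearing in Theorem \ref{thm:specialize} (namely $h(z)-1$, $\lambda_i-\lambda_j+j-i-1$, and $j-i-1$) is nonnegative, so each factor has the form $P_k/Q$ with $k\ge 0$.

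The next step is to clear all the $Q$'s. Multiplying the formula of Theorem \ref{thm:specialize} by $(\alpha\beta-\gamma\delta)^{|\lambda|}$ turns $S^{|\lambda|}$ into $Q^{|\lambda|}$, and these $|\lambda|$ copies of $Q$ exactly cancel the denominators of the $|\lambda|$ hook factors $\prod_{z}(x+h(z)-1)$. In the content factor $\prod_{i<j}(x+\lambda_i-\lambda_j+j-i-1)/(x+j-i-1)$ the numerator and denominator each contribute $\binom{n}{2}$ copies of $Q$, which cancel; and the remaining rational factor $\prod_{i<j}(\lambda_i-\lambda_j+j-i)/(j-i)$ is the Weyl dimension $s_{\lambda}(1^n)$, a positive integer. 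Writing $\ell_i=\lambda_i+n-i$, so that $\lambda_i-\lambda_j+j-i=\ell_i-\ell_j$, this reduces the whole problem to showing that the ratio $\big(\prod_{z\in\lambda}P_{h(z)-1}\,\prod_{i<j}P_{(\ell_i-\ell_j)-1}\big)\big/\prod_{i<j}P_{(j-i)-1}$ is a polynomial with positive coefficients.

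Here the key input is the classical multiset identity for hook lengths in terms of first-column beta-numbers: for the $n$ distinct integers $\ell_1>\dots>\ell_n\ge 0$ one has $\{h(z):z\in\lambda\}\uplus\{\ell_i-\ell_j:i<j\}=\biguplus_{i=1}^n\{1,2,\dots,\ell_i\}$. Shifting every entry down by $1$ and applying the same identity to the empty shape (whose beta-numbers are $n-i$) for the denominator, the ratio above collapses to $\prod_{i=1}^n\prod_{k=n-i}^{\lambda_i+n-i-1}P_k$, a product of $P_k$'s with $k\ge 0$ and hence a polynomial with positive integer coefficients; multiplying by $s_\lambda(1^n)$ yields $(\alpha\beta-\gamma\delta)^{|\lambda|}K_\lambda$. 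The main obstacle is precisely this cancellation: a priori the $P_k$ in the denominator share no common factor with those in the numerator, so one genuinely needs the beta-number identity (equivalently, the containment of the denominator multiset in the numerator multiset) to see that the quotient is a polynomial at all. An alternative would be to induct with Proposition \ref{prop:recurrence} from the base case Proposition \ref{prop:1}, but there the per-step factors carry their own powers of $Q^{-1}$ together with ratios of integers, so tracking the normalization is less transparent than in the hook-content route above.
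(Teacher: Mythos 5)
Your proof is correct, but it follows a genuinely different route from the paper's. The paper deduces the corollary in three lines from Proposition \ref{prop:recurrence}: iterating that recurrence expresses $K_\lambda$ as $K_{(0,\dots,0)}=1$ times a product of factors, each of which is, up to the normalizing power of $\alpha\beta-\gamma\delta$, a positive rational multiple of some $P_k=\alpha+\beta+\gamma+\delta+k(\alpha+\gamma)(\beta+\delta)$; positivity therefore propagates inductively, with no closed formula and no outside input. You instead work directly from the hook--content product of Theorem \ref{thm:specialize}, clear the powers of $Q=(\alpha+\gamma)(\beta+\delta)$, and then invoke two classical facts the paper never uses: that $\prod_{i<j}(\lambda_i-\lambda_j+j-i)/(j-i)=s_\lambda(1^n)$ is a positive integer, and the beta-number/hook-length multiset identity $\{h(z):z\in\lambda\}\uplus\{\ell_i-\ell_j : i<j\}=\biguplus_{i=1}^n\{1,\dots,\ell_i\}$ with $\ell_i=\lambda_i+n-i$ (Macdonald, Ch.~I, \S 1, Ex.~1), which is exactly what guarantees that the $P_{j-i-1}$'s in the denominator cancel into the numerator; both steps are sound. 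Your conclusion is also stronger than the paper's: you obtain the explicit closed form $(\alpha\beta-\gamma\delta)^{|\lambda|}K_\lambda=s_\lambda(1^n)\prod_{i=1}^{n}\prod_{k=n-i}^{\lambda_i+n-i-1}P_k$, which pins the normalizing factor down to exactly $(\alpha\beta-\gamma\delta)^{|\lambda|}$, shows the coefficients are positive \emph{integers}, and recovers Proposition \ref{prop:1} as the case $\lambda=(N-r,0,\dots,0)$ --- a useful consistency check, especially since Proposition \ref{prop:recurrence} as printed appears to have dropped a factor of $S^m$ relative to Lemma \ref{Lem:2} (test $\lambda=(0)$, $\nu=(1)$: it yields $x$ rather than $K_{(1)}=Sx$), so the paper's one-line induction implicitly requires that correction while your formula is self-verifying. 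What the paper's route buys in exchange is brevity and self-containment: it never leaves the results already established in Section \ref{sec:specialize}, whereas your argument hinges on an external combinatorial identity.
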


\begin{proof}
Note that we can repeatedly apply Proposition \ref{prop:recurrence} to an arbitrary partition
$(\lambda_1,\dots, \lambda_n)$, 
so as to express $K_{\lambda}$ in terms of $K_{(0,0,\dots,0)}$.  
It follows from the definition that $K_{(0,0,\dots,0)}$ is equal to $1$, and applying
the proposition preserves the property of being a polynomial with positive coefficients.

\end{proof}

\bibliographystyle{alpha}
\bibliography{bibliography}

\begin{thebibliography}{DEHP93}

\bibitem[And83]{Andreief}
M.~Andr\'eief.
\newblock Note sur une relation entre les int\'egrales d\'efinies des produits
  des fonctions.
\newblock {\em M\'em. de la Soc. Sci. de Bordeaux}, 2:1--14, 1883.

\bibitem[AW85]{AW}
Richard Askey and James Wilson.
\newblock Some basic hypergeometric orthogonal polynomials that generalize
  {J}acobi polynomials.
\newblock {\em Mem. Amer. Math. Soc.}, 54(319):iv+55, 1985.

\bibitem[BC14]{BorodinCorwin}
Alexei Borodin and Ivan Corwin.
\newblock Macdonald processes.
\newblock {\em Probab. Theory Related Fields}, 158(1-2):225--400, 2014.

\bibitem[Can17]{Cantini}
Luigi Cantini.
\newblock Asymmetric simple exclusion process with open boundaries and
  {K}oornwinder polynomials.
\newblock {\em Ann. Henri Poincar\'e}, 18(4):1121--1151, 2017.

\bibitem[CMW17]{CMW}
Sylvie Corteel, Olya Mandelshtam, and Lauren Williams.
\newblock Combinatorics of the two-species {ASEP} and {K}oornwinder moments,
  2017.

\bibitem[CSSW12]{CSSW}
S.~Corteel, R.~Stanley, D.~Stanton, and L.~Williams.
\newblock Formulae for {A}skey-{W}ilson moments and enumeration of staircase
  tableaux.
\newblock {\em Trans. Amer. Math. Soc.}, 364(11):6009--6037, 2012.

\bibitem[CW11]{CW-Duke1}
Sylvie Corteel and Lauren~K. Williams.
\newblock Tableaux combinatorics for the asymmetric exclusion process and
  {A}skey-{W}ilson polynomials.
\newblock {\em Duke Math. J.}, 159(3):385--415, 2011.

\bibitem[CW13]{CW-Duke2}
Sylvie Corteel and Lauren~K. Williams.
\newblock Erratum to ``{T}ableaux combinatorics for the asymmetric exclusion
  process and {A}skey-{W}ilson polynomials'' [mr2831874].
\newblock {\em Duke Math. J.}, 162(15):2987--2996, 2013.

\bibitem[dB55]{deBruijn}
N.~G. de~Bruijn.
\newblock On some multiple integrals involving determinants.
\newblock {\em J. Indian Math. Soc. (N.S.)}, 19:133--151 (1956), 1955.

\bibitem[DEHP93]{DEHP}
B.~Derrida, M.~R. Evans, V.~Hakim, and V.~Pasquier.
\newblock Exact solution of a {$1$}{D} asymmetric exclusion model using a
  matrix formulation.
\newblock {\em J. Phys. A}, 26(7):1493--1517, 1993.

\bibitem[GR04]{GR}
George Gasper and Mizan Rahman.
\newblock {\em Basic hypergeometric series}, volume~96 of {\em Encyclopedia of
  Mathematics and its Applications}.
\newblock Cambridge University Press, Cambridge, second edition, 2004.
\newblock With a foreword by Richard Askey.

\bibitem[GV85]{GV}
Ira Gessel and G{\'e}rard Viennot.
\newblock Binomial determinants, paths, and hook length formulae.
\newblock {\em Adv. in Math.}, 58(3):300--321, 1985.

\bibitem[Hai]{Haiman}
Mark Haiman.
\newblock Personal communication, January 2007.

\bibitem[KLS10a]{Koekoek}
Roelof Koekoek, Peter~A. Lesky, and Ren{\'e}~F. Swarttouw.
\newblock {\em Hypergeometric orthogonal polynomials and their
  {$q$}-analogues}.
\newblock Springer Monographs in Mathematics. Springer-Verlag, Berlin, 2010.
\newblock With a foreword by Tom H. Koornwinder.

\bibitem[KLS10b]{KS}
Roelof Koekoek, Peter~A. Lesky, and Ren\'e~F. Swarttouw.
\newblock {\em Hypergeometric orthogonal polynomials and their
  {$q$}-analogues}.
\newblock Springer Monographs in Mathematics. Springer-Verlag, Berlin, 2010.
\newblock With a foreword by Tom H. Koornwinder.

\bibitem[KM59a]{KM2}
Samuel Karlin and James McGregor.
\newblock Coincidence probabilities.
\newblock {\em Pacific J. Math.}, 9:1141--1164, 1959.

\bibitem[KM59b]{KM1}
Samuel Karlin and James McGregor.
\newblock Coincidence properties of birth and death processes.
\newblock {\em Pacific J. Math.}, 9:1109--1140, 1959.

\bibitem[Koo92]{Koornwinder}
Tom~H. Koornwinder.
\newblock Askey-{W}ilson polynomials for root systems of type {$BC$}.
\newblock In {\em Hypergeometric functions on domains of positivity, {J}ack
  polynomials, and applications ({T}ampa, {FL}, 1991)}, volume 138 of {\em
  Contemp. Math.}, pages 189--204. Amer. Math. Soc., Providence, RI, 1992.

\bibitem[Kra99]{Krattenthaler}
C.~Krattenthaler.
\newblock Advanced determinant calculus.
\newblock {\em S\'em. Lothar. Combin.}, 42:Art. B42q, 67 pp. (electronic),
  1999.
\newblock The Andrews Festschrift (Maratea, 1998).

\bibitem[Lin73]{Lindstrom}
Bernt Lindstr{\"o}m.
\newblock On the vector representations of induced matroids.
\newblock {\em Bull. London Math. Soc.}, 5:85--90, 1973.

\bibitem[Mac95]{Macdonald}
I.~G. Macdonald.
\newblock {\em Symmetric functions and {H}all polynomials}.
\newblock Oxford Mathematical Monographs. The Clarendon Press, Oxford
  University Press, New York, second edition, 1995.
\newblock With contributions by A. Zelevinsky, Oxford Science Publications.

\bibitem[MGP68]{bio}
J~Macdonald, J~Gibbs, and A~Pipkin.
\newblock Kinetics of biopolymerization on nucleic acid templates.
\newblock {\em Biopolymers}, 6, 1968.

\bibitem[NNSY01]{Noumi}
Jun Nakagawa, Masatoshi Noumi, Miki Shirakawa, and Yasuhiko Yamada.
\newblock Tableau representation for {M}acdonald's ninth variation of {S}chur
  functions.
\newblock In {\em Physics and combinatorics, 2000 ({N}agoya)}, pages 180--195.
  World Sci. Publ., River Edge, NJ, 2001.

\bibitem[Rai]{Rains}
Eric Rains.
\newblock Personal communication, June 2013.

\bibitem[Spi70]{Spitzer}
Frank Spitzer.
\newblock Interaction of {M}arkov processes.
\newblock {\em Advances in Math.}, 5:246--290 (1970), 1970.

\bibitem[Sta]{Dennis}
Dennis Stanton.
\newblock Personal communication, January 2016.

\bibitem[Uch08]{Uchiyama}
Masaru Uchiyama.
\newblock Two-species asymmetric simple exclusion process with open boundaries.
\newblock {\em Chaos Solitons Fractals}, 35(2):398--407, 2008.

\bibitem[USW04]{USW}
Masaru Uchiyama, Tomohiro Sasamoto, and Miki Wadati.
\newblock Asymmetric simple exclusion process with open boundaries and
  {A}skey-{W}ilson polynomials.
\newblock {\em J. Phys. A}, 37(18):4985--5002, 2004.

\bibitem[vD95]{vanDiejen}
J.~F. van Diejen.
\newblock Commuting difference operators with polynomial eigenfunctions.
\newblock {\em Compositio Math.}, 95(2):183--233, 1995.

\bibitem[Vie]{Viennot1}
G\'erard Viennot.
\newblock Une th\'eorie combinatoire des polynomes orthogonaux.
\newblock Lecture Notes LACIM, UQAM, Montr\'eal, 219 pages, 1984.

\bibitem[Vie85]{Viennot2}
G\'erard Viennot.
\newblock A combinatorial theory for general orthogonal polynomials with
  extensions and applications.
\newblock In {\em Orthogonal polynomials and applications ({B}ar-le-{D}uc,
  1984)}, volume 1171 of {\em Lecture Notes in Math.}, pages 139--157.
  Springer, Berlin, 1985.

\end{thebibliography}

\end{document}